\documentclass[onefignum,onetabnum]{siamart250211}

\usepackage{amsmath,amssymb,mathtools,mathrsfs}
\usepackage{bm}
\usepackage{booktabs}
\usepackage{enumitem}
\usepackage{graphicx}
\usepackage{microtype}
\usepackage{placeins}

\newsiamthm{assumption}{Assumption}
\newsiamremark{remark}{Remark}

\newcommand{\R}{\mathbb R}
\newcommand{\SPD}{\mathbb S_{++}}
\newcommand{\PSD}{\mathbb S_{+}}
\newcommand{\F}{\mathfrak F}
\newcommand{\Ck}{\mathcal C_K}
\newcommand{\Yk}{\mathcal Y_K}
\newcommand{\Kron}{\mathcal K}
\newcommand{\AI}{\mathrm{AI}}
\newcommand{\Len}{\operatorname{Len}}

\newcommand{\diag}{\operatorname{diag}}
\newcommand{\blockdiag}{\operatorname{blockdiag}}
\newcommand{\rank}{\operatorname{rank}}
\newcommand{\tr}{\operatorname{tr}}
\newcommand{\dist}{\operatorname{dist}}

\newcommand{\argminop}{\operatorname*{arg\,min}}

\newcommand{\dd}{\mathrm d}

\newcommand{\fro}[1]{\left\lVert #1\right\rVert_F}
\newcommand{\opnorm}[1]{\left\lVert #1\right\rVert_{\operatorname{op}}}
\newcommand{\norm}[1]{\left\lVert #1\right\rVert}
\newcommand{\ip}[2]{\left\langle #1,#2\right\rangle}
\newcommand{\pospart}[1]{\left[#1\right]_+}

\crefname{assumption}{Assumption}{Assumptions}
\crefname{definition}{Definition}{Definitions}
\crefname{theorem}{Theorem}{Theorems}
\crefname{proposition}{Proposition}{Propositions}
\crefname{lemma}{Lemma}{Lemmas}
\crefname{corollary}{Corollary}{Corollaries}

\headers{Structured Preconditioning in Affine-Invariant Geometry}{Z. Li}

\title{Structured Preconditioning in Affine-Invariant Geometry:
Projection, Certificates, and Kronecker Separation}

\author{Zavier Li\thanks{Xidian University, Xi'an, China
  (\email{zavierli888@gmail.com}).}}

\hypersetup{
  pdftitle={Structured Preconditioning in Affine-Invariant Geometry: Projection, Certificates, and Kronecker Separation},
  pdfauthor={Zavier Li},
  pdfkeywords={positive-definite matrix, affine-invariant metric, Kronecker product, structured preconditioning, condition number, geodesic convexity}
}

\begin{document}
\maketitle

\begin{abstract}
Nearest structured approximation and best structured preconditioning solve
different matrix optimization problems.  We determine their exact relation
for Kronecker positive-definite matrices under the affine-invariant
Riemannian metric.  The Kronecker family is closed and geodesically convex, so
every full matrix has a unique affine-invariant projection.  Its logarithmic
residual satisfies partial-trace normal equations and yields certified point
and objective errors for an Armijo projection solver.  Our central result
shows that this unique projection is also a minimizer of the
Hessian-relative condition number if and only if the extreme spectral states
admit identical tensor marginals.  A computable marginal-mismatch residual
either vanishes at a condition-optimal projection or produces a strict
descent direction.  Two relative spectral levels always force projection
optimality; more strongly, every \(2\times2\) Kronecker projection is
condition-optimal.  An explicit \(2\times3\) construction is therefore a
dimension-minimal strict separation.  Residual-calibrated bounds further bracket
the best attainable Kronecker condition number and the suboptimality of the
projection.  Supporting results place classical diagonal and block Loewner
sandwiches, fixed-basis primal--dual obstructions, and general log-spectral
targets in the same certificate language.  Given validated numerical
enclosures and outward-rounded comparisons, an interval-safe corollary
preserves the soundness of the full
Kronecker tests.  Deterministic small-matrix checks, including a multistart generic
log-factor oracle independent of the partial-trace solver, verify the stated
identities and bounds.
\end{abstract}

\begin{keywords}
positive-definite matrix, affine-invariant metric, Kronecker product,
structured preconditioning, condition number, geodesic convexity
\end{keywords}

\begin{MSCcodes}
15A18, 15B48, 65F08, 65F35, 90C25
\end{MSCcodes}

\section{Introduction}

Positive-definite preconditioners define the geometry in which an optimization
method converts gradients into motion.  A full matrix can reshape every
direction, whereas practical methods often restrict the geometry to diagonal,
block, Kronecker, or low-rank structure.  The restriction creates two matrix
problems.  The first asks whether the family can reach a prescribed
Hessian-relative condition number.  The second asks which structured geometry
is nearest to a given full matrix.  These objectives are related, but they
need not select the same matrix.

Let \(H\in\SPD^d\) and let \(G\in\SPD^d\) be a candidate geometry.  The
preconditioned condition number is
\[
  \kappa(G^{-1}H)
  =
  \kappa(H^{-1/2}GH^{-1/2}).
\]
We measure matrix displacement with the affine-invariant distance
\[
  d_{\AI}(G_0,G_1)
  =
  \fro{\log(G_0^{-1/2}G_1G_0^{-1/2})}.
\]
This metric is congruence invariant and turns the SPD cone into a Hadamard
manifold.  It therefore supplies unique projection onto closed geodesically
convex sets and a natural intrinsic geometry for structured matrix families.

Our first layer treats spectral targets.  For \(H_a\succ0\), let
\[
  \Omega_a(G)
  =
  \phi_a\!\left(
  \log\lambda(G^{-1/2}H_aG^{-1/2})
  \right),
\]
where \(\phi_a\) is convex and permutation invariant.  Coordinatewise
nondecreasing convex aggregation preserves geodesic convexity.  This class
contains log condition number and smooth spectral surrogates.  Its sublevel
sets have unique affine-invariant projection, an explicit exact penalty under
a Slater margin, and a proximal iteration with a global contraction bound.
For closed geodesically convex smooth families, optimality is equivalent to
the whitened spectral subgradient lying in the normal space.

The second layer gives family-specific certificates.  Diagonal and fixed-block
families reach a threshold \(K\) exactly when a structured positive matrix
\(E\) satisfies
\[
  E\preceq H\preceq KE.
\]
This is an LMI feasibility problem and admits explicit dual obstructions.
For Kronecker metrics, the factorization \(G=A\otimes B\) has a scale gauge:
\((cA)\otimes(c^{-1}B)=A\otimes B\).  After quotienting that gauge, we derive
the intrinsic affine-invariant line element and distance, prove that the
Kronecker set is a complete totally geodesic manifold, and characterize its
nearest-point projection by partial-trace normal equations.

The projection geometry exposes the paper's central structural distinction.
The nearest
Kronecker point to \(H\) minimizes \(d_{\AI}(G,H)\), whereas the best
Kronecker preconditioner minimizes \(\kappa(G^{-1}H)\).  We prove an exact
criterion for when the unique projection is a condition-number minimizer, in
terms of the tensor marginals of the extreme eigenspaces of the relative
matrix.  We also turn the criterion into a quantitative marginal-mismatch
residual and a strict descent direction.  Two relative spectral levels force
the criterion.  Independently, every \(2\times2\) Kronecker projection is
condition-optimal, while an explicit \(2\times3\) example gives the
dimension-minimal strict separation.

\paragraph{Contributions.}
The paper makes four main contributions.

\begin{enumerate}[leftmargin=*,itemsep=0.25em]
  \item We characterize exactly when the unique AIRM-nearest Kronecker matrix
  is also condition-optimal, express the criterion through extreme-state
  tensor marginals, derive a computable mismatch/descent certificate, prove
  projection optimality for every two-level relative spectrum and every
  \(2\times2\) problem, and exhibit dimension-minimal strict separation in
  \(2\times3\).

  \item Building on the established Fisher/AIRM quotient geometry of
  Kronecker covariances, we prove global projection normal equations and give
  residual point-error, objective-gap, threshold, and projection-suboptimality
  certificates together with an Armijo solver.

  \item We place these results inside a geodesically convex log-spectral
  target framework with a global normal-response law, exact penalty,
  proximal contraction, and a sharp scale-closed distance lower bound.  The
  full Kronecker certificate admits an interval-safe form when validated
  enclosures of the numerical inputs are supplied.

  \item As supporting comparisons, we restate classical diagonal and block
  reachability as Loewner-sandwich LMIs in the same target-distance notation,
  record dual and fixed-basis obstructions, and validate the matrix identities
  with deterministic, randomized, and independent-oracle checks.
\end{enumerate}

The geometric objects are motivated by diagonal adaptive methods, K-FAC,
Shampoo, and factored statistics, but the exact claims are deterministic
finite-dimensional matrix statements.  Small-SPD experiments verify
projection residuals, reachability, dual obstructions, and strict separation;
they are not used as optimizer-ranking evidence.

\paragraph{Organization.}
\Cref{sec:related} reviews SPD geometry, scaling, and Kronecker approximation.
\Cref{sec:information-rdgc} develops spectral targets and normal response.
\Cref{sec:diag-block} gives diagonal and block certificates.
\Cref{sec:kron-lowrank} develops Kronecker projection and separation.
\Cref{sec:verification,sec:discussion} give verification and scope.

\section{Related Work}
\label{sec:related}

\paragraph{Positive-definite matrix geometry.}
The affine-invariant geometry of positive-definite matrices provides the
distance, exponential map, congruence invariance, and nonpositive curvature
used throughout
\cite{bhatia2007positive,pennec2006riemannian,higham2008functions}.
Geodesically convex optimization and best approximation on SPD manifolds are
developed in
\cite{sra2015conic,lim2004best,bacak2014convex,tumpach2024totally}.
Spectral subdifferential calculus follows the Hermitian convex-analysis
framework of \cite{lewis1996convex}.  Our contribution is the joint
specialization to structured condition targets, explicit penalty and
certificate interfaces, and the exact comparison between metric projection
and condition-number optimization.

\paragraph{Scaling and structured preconditioning.}
Diagonal and block condition-number optimization is connected to optimal
scaling, equilibration, and semidefinite formulations
\cite{bauer1963optimally,vandersluis1969equilibration,
shapiro1982optimally,shapiro1985block,marechal2009optimizing,
lu2011minimizing,jambulapati2020fast,qu2024optimal,gao2023scalable,
ghadimi2025omega}.
In particular, mixed packing--covering SDP solvers and customized large-scale
methods address the computational diagonal problem.  Geodesically convex
preconditioning over symmetric Lie-group quotients provides general
condition-number convexity, projected gradients, and first-order complexity
bounds \cite{dogan2025geodesicpreconditioning}.  We use that established
convexity viewpoint as background.  Applied to a Kronecker transformation
group, that framework supplies condition-objective convexity and first-order
optimization, but does not compare the optimizer with the AIRM projection or
derive the extreme-state marginal criterion.  Our diagonal and block results retain the
exact Loewner reachability question in the notation used later for Kronecker
target distance; the new comparison is between AIRM projection and spectral
condition optimality on the Kronecker SPD submanifold.

Diagonal adaptive methods include AdaGrad and Adam
\cite{duchi2011adagrad,kingma2014adam}; Adafactor and SM3 use factored or
shared statistics \cite{shazeer2018adafactor,anil2019sm3}; K-FAC and KFC use
Kronecker curvature approximations \cite{martens2015kfac,grosse2016kfc};
and Shampoo uses tensor-factor preconditioners
\cite{gupta2018shampoo,anil2020scalable}.  These algorithms motivate the
matrix families.  Identifying a concrete optimizer state with a point in the
family additionally requires a realization theorem specifying damping, bias
correction, update scale, and interpolation.

\paragraph{Kronecker approximation and covariance geometry.}
Kronecker approximation is classical under Frobenius loss
\cite{vanloan1993kronecker}.  Application-driven nearest-Kronecker
preconditioners use Frobenius or operator-specific product approximations for
Toeplitz, imaging, Markov/SAN, and stochastic Galerkin systems
\cite{kamm2000optimal,nagy2006imaging,langville2004san,
langville2004testing,ullmann2010stochastic}.  Those works ask whether a
computationally convenient product approximates a structured operator well or
accelerates a particular solver.  Our comparison instead fixes one Kronecker
family and separates its intrinsic AIRM projection from its exact
Hessian-relative condition-number minimizer.

Separable covariance and matrix-normal models
lead to likelihood and flip-flop procedures
\cite{dutilleul1999mle,werner2008kronecker}.  Geodesic convexity for
covariance estimation and Kronecker structure is established in
\cite{wiesel2012geodesic,wiesel2012kroneckerconvexity}; a broader survey is
\cite{wang2022kroneckersurvey}.  The Fisher/AIRM
quotient geometry, determinant-one factorization, scale gauge, and
dimension-weighted product metric are developed in
\cite{mccormack2025kroneckergeometry}, with related Riemannian algorithms in
\cite{bouchard2021onlinekronecker,simonis2025geodesicvb}.  Work connecting
Shampoo factors to Kronecker approximation studies a complementary algorithmic
question \cite{morwani2024shampoo}.

\begin{center}
\small
\begin{tabular}{p{0.24\linewidth}p{0.31\linewidth}p{0.35\linewidth}}
\toprule
Problem & Established object & Role in this paper \\
\midrule
Kronecker approximation & Frobenius nearest product
\cite{vanloan1993kronecker} & Contrast with intrinsic AIRM projection \\
Separable covariance & Likelihood equations and geodesic convexity
\cite{dutilleul1999mle,wiesel2012kroneckerconvexity} & Distinguish statistical
likelihood from metric projection \\
Kronecker information geometry & Scale quotient and product Fisher/AIRM
metric \cite{mccormack2025kroneckergeometry} & Infrastructure for logarithmic
partial-trace projection residuals \\
Optimal preconditioning & Geodesically convex condition optimization and
first-order methods \cite{dogan2025geodesicpreconditioning} & Exact criterion
for when the AIRM projection belongs to the condition-number argmin \\
\bottomrule
\end{tabular}
\end{center}

Thus the quotient metric itself is supporting infrastructure.  The main
increment here is the global AIRM projection characterization, its residual
error and reachability brackets, the extreme-state marginal criterion for
projection optimality, and strict separation from best Hessian-relative
preconditioning.

\section{Spectral Targets in Affine-Invariant Geometry}
\label{sec:information-rdgc}

We begin with affine-invariant SPD geometry and isolate the static spectral
principles used by the structured families.

\subsection{Relative SPD geometry}

Let
\[
  f(\theta)=\frac12\theta^\top H\theta,
  \qquad H\in\SPD^d.
\]
For a metric \(G\in\SPD^d\), gradient flow is
\(\dot\theta=-G^{-1}H\theta\).  Define the relative metric
\[
  S=H^{-1/2}GH^{-1/2}.
\]
Then \(\kappa(G^{-1}H)=\kappa(S)\), where the left side denotes the ratio of
generalized eigenvalues in \(Hv=\lambda Gv\).

The affine-invariant metric has line element and distance
\[
  \norm{Z}_{S,\AI}
  =\fro{S^{-1/2}ZS^{-1/2}},
  \qquad
  d_{\AI}(S_0,S_1)
  =\fro{\log(S_0^{-1/2}S_1S_0^{-1/2})}.
\]
The SPD cone is a finite-dimensional Hadamard manifold
\cite{bhatia2007positive,pennec2006riemannian}.

For \(K\ge1\), set
\[
  \Ck=\{S\in\SPD^d:\kappa(S)\le K\}.
\]

\begin{theorem}[Full-SPD spectral benchmark]
\label{thm:full-spd-benchmark}
Let \(y_1\le\cdots\le y_d\) be the ordered log-eigenvalues of
\(S_0\in\SPD^d\).  Then
\[
  D_K(S_0)
  :=d_{\AI}(S_0,\Ck)
  =
  \min_{c\in\R}
  \left(
  \sum_{i=1}^d\dist(y_i,[c,c+\log K])^2
  \right)^{1/2}.
\]
\end{theorem}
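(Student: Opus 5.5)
We reduce the statement to a problem about eigenvalue vectors. The two ingredients are a lower bound for the distance between SPD matrices in terms of their log-spectra, and an explicit commuting competitor that attains that bound.

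Write \(y=\log\lambda(S_0)\) in increasing order. The set \(\Ck\) is invariant under congruence by orthogonal matrices, and membership depends only on the spectrum. The relevant set of log-spectra is
\[
  \mathcal Y_K=\{z\in\R^d:\max_i z_i-\min_i z_i\le\log K\}
  =\bigcup_{c\in\R}[c,c+\log K]^d .
\]
For the lower bound, we invoke the known affine-invariant spectral inequality. For \(S_0,S\in\SPD^d\) with ordered log-eigenvalue vectors \(y\) and \(z\),
\[
  \fro{\log(S_0^{-1/2}SS_0^{-1/2})}\ \ge\ \norm{y-z}_2 .
\]
This holds because the Riemannian exponential at the identity does not increase distances on a Hadamard manifold. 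Alternatively, one can use the log-majorization \(\log\lambda(S_0^{-1/2}SS_0^{-1/2})\succ z-y^{\downarrow\uparrow}\) (Gelfand--Naimark / Lidskii type), combined with the fact that \(\norm{\cdot}_2\) is Schur-convex and with the rearrangement inequality, which shows that same-order pairing minimizes \(\norm{y-z_\pi}\). This inequality is the step I expect to be the main obstacle. If a clean citation is unavailable, I would prove it from the multiplicative Lidskii theorem: the vector \(\log\lambda(A^{-1/2}BA^{-1/2})\) majorizes \(\log\lambda(B)-\log\lambda(A)\) when both are sorted decreasingly. Then \(\sum\phi\) with \(\phi(t)=t^2\) is monotone under majorization.

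From this inequality, \(d_{\AI}(S_0,\Ck)\ge\inf_{z\in\mathcal Y_K}\norm{y-z}\) follows immediately. For the inequality in the other direction, let \(S_0=U\diag(e^{y})U^\top\) and take \(S=U\diag(e^{z})U^\top\). This matrix commutes with \(S_0\), so \(d_{\AI}(S_0,S)=\norm{y-z}_2\) exactly, and \(S\in\Ck\) whenever \(z\in\mathcal Y_K\). Hence
\[
  D_K(S_0)=\inf_{z\in\mathcal Y_K}\norm{y-z}_2 .
\]

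It remains to evaluate the right-hand side. Since \(\mathcal Y_K\) is a union of boxes, we have
\[
  \inf_{z\in\mathcal Y_K}\norm{y-z}
  =\inf_c\dist(y,[c,c+\log K]^d),
\]
and the distance to a box separates coordinatewise into \(\bigl(\sum_i\dist(y_i,[c,c+\log K])^2\bigr)^{1/2}\). This function of \(c\) is continuous, convex, and coercive, so the infimum over \(c\) is attained; we may restrict to \(c\in[y_1-\log K,y_d]\). This gives the stated minimum. The ordering of the \(y_i\) is not needed in the final formula, but it is what makes the same-order matching in the lower bound valid. Attainment also shows that the projection onto \(\Ck\) is the commuting matrix obtained by clipping each \(y_i\) to \([c^\star,c^\star+\log K]\).
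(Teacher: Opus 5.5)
Your proof is correct and follows the paper's route. Both arguments use the ordered log-spectral lower bound $d_{\AI}(S_0,S)\ge\norm{y-z}_2$, a commuting competitor with clipped log-eigenvalues for the matching upper bound, and the coordinatewise box-distance evaluation over $c$; your attainment argument for the minimum over $c$ is a small welcome addition.

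One slip is in your first justification. On a Hadamard manifold the exponential map does not \emph{decrease} distances, which is equivalent to $\fro{\log S_0-\log S}\le d_{\AI}(S_0,S)$. That route also still needs Hoffman--Wielandt to pass to sorted eigenvalues. Your multiplicative-Lidskii majorization argument gives the bound directly and is the one to keep.
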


Thus the unconstrained benchmark clips the relative log-spectrum into an
interval of width \(\log K\).  Restricted families constrain which spectral
and eigenspace motions are admissible.

\subsection{Structured metric families and target distance}

Let \(\F\subset\SPD^d\) be an admissible metric family and define its
Hessian-relative realization
\[
  \mathcal S_\F(H)
  =
  \{H^{-1/2}GH^{-1/2}:G\in\F\}.
\]
A realization specifies a path-connected component, treatment of scale
freedom, an admissible absolutely continuous path class, and the induced or
quotient affine-invariant length.  Smooth embedded, closed geodesically convex,
quotient, and stratified families therefore remain distinct cases.

\begin{definition}[Structured affine-invariant target distance]
\label{def:restricted-complexity}
For \(G_0\in\F\) and \(S_0=H^{-1/2}G_0H^{-1/2}\), define
\[
\begin{aligned}
  D_{K,\F}(S_0;H)
  =
  \inf\{&\Len_{\AI}(S_{[0,T]}):
  S(0)=S_0,\ S(T)\in\Ck,\\
  &S_t\in\mathcal S_\F(H),
  \ S_{[0,T]}\text{ admissible}\}.
\end{aligned}
\]
The value is \(+\infty\) if the relative target is empty or lies in a
different admissible component.
\end{definition}

This target distance records the least intrinsic affine-invariant motion from the initial geometry to a structured condition target.  The endpoint records visible
preconditioning quality, and the admissible curve is eliminated by the
infimum.  For closed geodesically convex realizations it reduces to a unique
target projection; nonconvex or stratified families retain the variational
definition without inheriting that uniqueness.

\subsection{Generating geodesically convex information}

For \(H_a\in\SPD^d\) and a convex permutation-invariant function
\(\phi_a:\R^d\to\R\), define
\[
  \Omega_a(G)
  =
  \phi_a\!\left(
  \log\lambda(G^{-1/2}H_aG^{-1/2})
  \right).
\]

\begin{theorem}[Information-spectrum generation law]
\label{thm:structured-log-spectral-mother}
Let \(\F\subset\SPD^d\) be nonempty, closed, and geodesically convex.  If
\(\rho:\R^q\to\R\) is convex and nondecreasing in every coordinate, then
\[
  \mathcal J(G)
  =
  \rho(\Omega_1(G),\ldots,\Omega_q(G))
\]
is closed and geodesically convex on \(\F\).  The conclusion holds pointwise
for continuously time-varying \(H_{t,a},\phi_{t,a},\rho_t\).

If every \(\phi_a(x+c\mathbf1)=\phi_a(x)\), then \(\mathcal J(cG)=\mathcal
J(G)\).  If \(\phi_a\) is \(L_a\)-Lipschitz and \(\rho\) is
\(L_\rho\)-Lipschitz, replacing \(H_a\) by \(\widehat H_a\) changes the
objective by at most
\[
  \sup_{G\in\F}
  |\widehat{\mathcal J}(G)-\mathcal J(G)|
  \le
  L_\rho
  \left(\sum_{a=1}^qL_a^2
  d_{\AI}(H_a,\widehat H_a)^2\right)^{1/2}.
\]
\end{theorem}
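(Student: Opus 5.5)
The plan is to reduce everything to one geodesic convexity fact for a single \(\Omega_a\), and then compose. Fix \(G_0,G_1\in\F\) and the AIRM geodesic
\(G_t=G_0^{1/2}(G_0^{-1/2}G_1G_0^{-1/2})^tG_0^{1/2}\), which stays in \(\F\) by geodesic convexity. The eigenvalues of \(G^{-1/2}H_aG^{-1/2}\) coincide with those of \(H_a^{1/2}G^{-1}H_a^{1/2}\). The map \(G\mapsto H_a^{1/2}G^{-1}H_a^{1/2}\) is a composition of inversion and congruence, and both are isometries of the affine-invariant metric. It therefore sends \(G_t\) to a geodesic \(P_t\).

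The key step is the following claim. For any AIRM geodesic \(P_t\), the vector of ordered log-eigenvalues \(\lambda^\downarrow(\log P_t)\) is ``majorization-convex'': for every convex permutation-invariant \(\phi\), the map \(t\mapsto\phi(\log\lambda(P_t))\) is convex. To prove it, I would invoke the classical log-majorization of the geometric mean \cite{bhatia2007positive}, in the form
\[
\lambda(\log P_{1/2})\prec \tfrac12\lambda^\downarrow(\log P_0)+\tfrac12\lambda^\downarrow(\log P_1).
\]
This follows from \(\lambda(A\#B)\prec_{\log}\lambda(A^{1/2}BA^{1/2})^{1/2}\) together with Lidskii/Ky Fan, after passing to sums of logarithms through antisymmetric tensor powers. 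Schur-convexity of a convex symmetric \(\phi\) then gives \(\phi(\log\lambda(P_{1/2}))\le\phi\bigl(\tfrac12 x_0+\tfrac12x_1\bigr)\le\tfrac12\phi(x_0)+\tfrac12\phi(x_1)\), where \(x_i=\lambda^\downarrow(\log P_i)\). Restricting to subgeodesics yields midpoint convexity. Since \(\phi\) is convex and finite on \(\R^d\) it is continuous, so midpoint convexity upgrades to convexity. This majorization step is the main obstacle; if a clean citation is unavailable, I would derive it from the fact that \(t\mapsto\sum_{i\le k}\lambda_i^\downarrow(\log P_t)\) is convex along geodesics, i.e.\ that log of the Ky Fan \(k\)-norm of antisymmetric powers is geodesically convex.

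Composition is then standard. Each \(t\mapsto\Omega_a(G_t)\) is convex and \(\rho\) is convex and coordinatewise nondecreasing, so \(t\mapsto\rho(\Omega_1(G_t),\ldots)\) is convex. Closedness is immediate because \(\mathcal J\) is continuous: eigenvalues depend continuously on \(G\) and the finite convex functions are continuous. For time-varying data the argument applies at each fixed \(t\). For scale invariance, note that \((cG)^{-1/2}H_a(cG)^{-1/2}\) has log-spectrum shifted by \(-\log c\,\mathbf1\), so invariance of each \(\phi_a\) under such shifts gives \(\mathcal J(cG)=\mathcal J(G)\).

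For the perturbation bound, fix \(G\in\F\) and set \(x=\log\lambda^\downarrow(G^{-1/2}H_aG^{-1/2})\) and \(\hat x\) analogously with \(\widehat H_a\). Lipschitz continuity of \(\phi_a\) gives \(|\widehat\Omega_a-\Omega_a|\le L_a\norm{\hat x-x}_2\), which I take with respect to the Euclidean norm. The AIRM analogue of Mirsky's inequality states that the ordered log-eigenvalue vectors of two SPD matrices differ in \(\ell_2\) by at most their AIRM distance; this is a consequence of the same log-majorization, or equivalently of the fact that \(d_{\AI}\ge\norm{\log\lambda(A)-\log\lambda(B)}\). Applying it with the congruence by \(G^{-1/2}\), which is an isometry, yields \(\norm{\hat x-x}\le d_{\AI}(H_a,\widehat H_a)\) uniformly in \(G\). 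Finally, Lipschitz continuity of \(\rho\) in the Euclidean norm gives \(|\widehat{\mathcal J}-\mathcal J|\le L_\rho\bigl(\sum_a L_a^2d_{\AI}(H_a,\widehat H_a)^2\bigr)^{1/2}\), and taking the supremum over \(G\in\F\) completes the proof.
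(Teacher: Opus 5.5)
Your proof is correct and has the same architecture as the paper's: a geodesic log-majorization, Schur-convexity of \(\phi_a\), monotone convex composition with \(\rho\), continuity, the shift by \(-\log c\,\mathbf 1\), and the congruence-invariant spectral inequality for stability. Using the isometry \(G\mapsto H_a^{1/2}G^{-1}H_a^{1/2}\) in place of the paper's congruence by \(H_a^{-1/2}\), reciprocal eigenvalues and sign flip is cosmetic.

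The real difference is the majorization step. You prove only the midpoint case, via \(\log\lambda(A\#B)\prec\tfrac12\log\lambda(A^{1/2}BA^{1/2})\prec\tfrac12\bigl(\log\lambda^\downarrow(A)+\log\lambda^\downarrow(B)\bigr)\), and then upgrade midpoint convexity by continuity. Both links hold:
the first is Weyl's eigenvalue--singular-value log-majorization for \(M=A^{-1/2}(A\#B)A^{1/2}\), which is similar to \(A\#B\) and satisfies \(M^\top M=A^{1/2}BA^{1/2}\). The second is Horn's multiplicative inequality, which is what your exterior-power idea proves; it is not the additive Lidskii/Ky Fan statement.

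The paper instead obtains the weighted statement for every \(t\in[0,1]\) at once. It uses the elementary bound \(\norm{X\#_tY}_{\mathrm{op}}\le\norm{X}_{\mathrm{op}}^{1-t}\norm{Y}_{\mathrm{op}}^{t}\), which follows from monotonicity of the mean applied to \(X\preceq\norm{X}_{\mathrm{op}}I\) and \(Y\preceq\norm{Y}_{\mathrm{op}}I\). It applies this bound on \(\bigwedge^k\), which commutes with \(\#_t\), and adds \(\det(A\#_tB)=\det(A)^{1-t}\det(B)^t\).

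That is exactly your fallback: convexity of \(t\mapsto\log\lambda_1(\bigwedge^kP_t)\) together with affinity of \(\log\det P_t\). Note that \(\log\lambda_1(\bigwedge^kP_t)\) is the log operator norm of the antisymmetric power, not a Ky Fan norm. This route avoids both your two-step chain and the midpoint upgrade, so it is the leaner primary argument.

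One small correction: the AIRM Mirsky bound does not follow from the geometric-mean majorization. Cite it as standard, as the paper does, or derive it from Hoffman--Wielandt applied to \(\log A,\log B\) together with the exponential metric increasing inequality \(\fro{\log A-\log B}\le d_{\AI}(A,B)\).
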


The condition-number objective is obtained from
\[
  \phi_{\mathrm{osc}}(x)=\max_i x_i-\min_i x_i,
  \qquad
  \mathcal J_H(G)=\log\kappa(G^{-1}H).
\]

\subsection{Static action value, exact penalty, and contraction}

The static theorem is stated on an abstract finite-dimensional Hadamard
manifold so that it can be reused by every closed geodesically convex family.

\begin{theorem}[Restricted target projection and exact penalty]
\label{thm:restricted-geometrodynamic-mother}
Let \((\mathcal M,d)\) be a finite-dimensional Hadamard manifold,
\(\F\subset\mathcal M\) a nonempty closed geodesically convex set, and
\(\mathcal J:\F\to\R\cup\{+\infty\}\) proper, closed, geodesically convex,
and bounded below.  Suppose
\[
  \mathcal C_\tau=\{G\in\F:\mathcal J(G)\le\tau\}
\]
is nonempty.  For quadratic kinetic action and terminal indicator
\(\iota_{\mathcal C_\tau}\), the value is
\[
  V(t,G)=\frac{d(G,\mathcal C_\tau)^2}{2(T-t)}.
\]
For \(G_0\in\F\), the least admissible length is
\[
  \mathfrak D_{\tau,\F}^{\mathcal J}(G_0)
  =d(G_0,\mathcal C_\tau)
  =d(G_0,P_\tau),
  \qquad
  P_\tau=P_{\mathcal C_\tau}(G_0).
\]
The projection and constant-speed minimizing geodesic are unique, and for
every \(Y\in\mathcal C_\tau\),
\[
  d(G_0,Y)^2
  \ge
  d(G_0,P_\tau)^2+d(P_\tau,Y)^2.
\]

If families or thresholds are nested, their distances are monotone.  If the
objectives satisfy
\[
  \sup_{G\in\F}|\widehat{\mathcal J}(G)-\mathcal J(G)|\le\varepsilon,
\]
then
\[
  \mathfrak D_{\tau+\varepsilon,\F}^{\mathcal J}(G_0)
  \le
  \mathfrak D_{\tau,\F}^{\widehat{\mathcal J}}(G_0)
  \le
  \mathfrak D_{\tau-\varepsilon,\F}^{\mathcal J}(G_0)
\]
whenever the outer values are finite.

Assume a strict feasible geometry \(G_s\) satisfies
\(\mathcal J(G_s)\le\tau-\sigma\), \(\sigma>0\), and put
\(R=d(G_0,G_s)>0\).  On
\(\mathcal D=\F\cap\overline B_R(G_0)\), every
\[
  \Lambda>\frac{2R^2}{\sigma}
\]
makes
\[
  \Phi_\Lambda(G)
  =
  \frac12d(G_0,G)^2
  +\Lambda\pospart{\mathcal J(G)-\tau}
\]
an exact penalty with unique minimizer \(P_\tau\).  For every \(\eta>0\),
the proximal iteration
\[
  G_{r+1}
  =
  \argminop_{G\in\mathcal D}
  \left\{\Phi_\Lambda(G)+\frac{d(G,G_r)^2}{2\eta}\right\}
\]
satisfies
\[
  d(G_r,P_\tau)^2
  \le
  (1+2\eta)^{-r}d(G_0,P_\tau)^2.
\]
If \(R=0\), then \(G_0=P_\tau\).
\end{theorem}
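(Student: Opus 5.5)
We prove the theorem in five parts, in order: the value function, the projection facts, monotonicity and perturbation, the exact penalty, and the proximal contraction.

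\textbf{Value function and least length.} First, \(\mathcal C_\tau\) is closed and geodesically convex. Closedness follows from lower semicontinuity of \(\mathcal J\) and closedness of \(\F\). Convexity holds because sublevel sets of a geodesically convex function on a geodesically convex set are geodesically convex. Since \(\mathcal C_\tau\) is nonempty, Hadamard projection theory (Ba\v{c}\'ak) gives a unique nearest point \(P_\tau\) and the obtuse-angle inequality. Together with the CAT(0) comparison, this yields the Pythagorean-type bound \(d(G_0,Y)^2\ge d(G_0,P_\tau)^2+d(P_\tau,Y)^2\).

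For the value function, Cauchy--Schwarz gives \(\int_t^T|\dot G|^2/2\ge \Len^2/(2(T-t))\), with equality for constant-speed curves. Every admissible curve ending in \(\mathcal C_\tau\) has length at least \(d(G,\mathcal C_\tau)\). The minimizing geodesic to \(P_\tau\) stays in \(\F\) because \(\F\) is geodesically convex and both endpoints lie in \(\F\). So \(V(t,G)=d(G,\mathcal C_\tau)^2/(2(T-t))\), and the least length equals \(d(G_0,P_\tau)\). Uniqueness of the geodesic is uniqueness of geodesics in Hadamard manifolds.

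\textbf{Monotonicity and perturbation.} Nesting \(\mathcal C\subseteq\mathcal C'\) gives \(d(G_0,\mathcal C')\le d(G_0,\mathcal C)\). Nested families and nested thresholds both produce nested sublevel sets. The uniform bound \(|\widehat{\mathcal J}-\mathcal J|\le\varepsilon\) implies
\[
\{\mathcal J\le\tau-\varepsilon\}\subseteq\{\widehat{\mathcal J}\le\tau\}\subseteq\{\mathcal J\le\tau+\varepsilon\},
\]
and the two-sided distance bound follows from monotonicity.

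\textbf{Exact penalty.} On the compact convex set \(\mathcal D\), \(\Phi_\Lambda\) is \(1\)-strongly geodesically convex, since \(d(G_0,\cdot)^2/2\) is \(1\)-strongly convex on a Hadamard manifold. Hence it has a unique minimizer \(G^*\). Also \(P_\tau\in\mathcal D\), because \(d(G_0,P_\tau)\le d(G_0,G_s)=R\).

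Suppose \(\mathcal J(G^*)>\tau\), and put \(\delta=\mathcal J(G^*)-\tau>0\). Consider the geodesic \(\gamma(s)\) from \(G^*\) to \(G_s\); it stays in \(\mathcal D\) by convexity of the ball and of \(\F\). By convexity, \(\mathcal J(\gamma(s))\le \tau+(1-s)\delta-s\sigma\). Choosing \(s=\delta/(\delta+\sigma)\) gives \(\mathcal J(\gamma(s))\le\tau\). Meanwhile the quadratic term increases by at most \(\tfrac12 d(G_0,\gamma(s))^2-\tfrac12d(G_0,G^*)^2\le sR\cdot 2R\), using the Lipschitz bound of \(d^2/2\) on the ball of radius \(R\) (gradient norm \(\le R\), path length \(\le 2R\)). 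Hence the change in \(\Phi_\Lambda\) is at most \(2R^2\delta/(\delta+\sigma)-\Lambda\delta\). This is negative as soon as \(\Lambda>2R^2/\sigma\ge 2R^2/(\delta+\sigma)\), contradicting optimality of \(G^*\). Therefore \(G^*\) is feasible. On \(\mathcal C_\tau\cap\mathcal D\), \(\Phi_\Lambda\) coincides with \(\tfrac12 d(G_0,\cdot)^2\), so \(G^*=P_\tau\). If \(R=0\), then \(G_0=G_s\) is feasible and is its own projection.

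\textbf{Proximal contraction.} Apply the standard Hadamard proximal-point argument to the \(1\)-strongly convex function \(\Phi_\Lambda+\iota_{\mathcal D}\), whose minimizer is \(P_\tau\). The optimality of \(G_{r+1}\), combined with the strong convexity inequality along the geodesic from \(G_{r+1}\) to \(P_\tau\) and the CAT(0) inequality, gives
\[
d(G_r,P_\tau)^2\ge(1+2\eta)\,d(G_{r+1},P_\tau)^2.
\]
More precisely,
\[
\Phi(P_\tau)+\frac{d(P_\tau,G_r)^2}{2\eta}\ge \Phi(G_{r+1})+\frac{d(G_{r+1},G_r)^2}{2\eta}+\Bigl(\tfrac12+\tfrac1{2\eta}\Bigr)d(G_{r+1},P_\tau)^2,
\]
together with \(\Phi(G_{r+1})\ge\Phi(P_\tau)+\tfrac12 d(G_{r+1},P_\tau)^2\). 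Iterating the one-step inequality gives the stated rate.

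\textbf{Main obstacle.} The delicate step is the exact-penalty threshold: the Lipschitz estimate along the geodesic must keep the constant at \(2R^2/\sigma\). That estimate should be checked against the bounds \(d(G_0,\gamma(s))\le R\) and \(d(G^*,G_s)\le 2R\).
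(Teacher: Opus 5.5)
Your proposal is correct and follows the paper's proof essentially step for step: Cauchy--Schwarz for the value function, sublevel-set convexity plus Hadamard projection for uniqueness and the Pythagorean inequality, set inclusions for monotonicity and the perturbation bracket, the feasible point on the geodesic toward $G_s$ combined with the $R$-Lipschitz bound on $\tfrac12 d(G_0,\cdot)^2$ for the exact penalty, and the two strong-convexity inequalities (at $G_{r+1}$ and at $P_\tau$) for the $(1+2\eta)^{-1}$ contraction. The only cosmetic difference is in the exact-penalty step: you first obtain a minimizer of $\Phi_\Lambda$ on the compact set $\mathcal D$ and argue by contradiction, whereas the paper directly shows $\Phi_\Lambda(G)>\Phi_\Lambda(P_\tau)$ for every infeasible $G\in\mathcal D$, so it needs no separate existence step.
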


The proximal recursion is a variational convergence benchmark: its rate is
conditional on solving each global proximal subproblem.  No claim is made that
those subproblems are cheaper than the original target projection for an
arbitrary family.

For the condition-number target, take \(\mathcal J=\mathcal J_H\) and \(\tau=\log K\).  Then
\(\mathfrak D_{\tau,\F}^{\mathcal J}=D_{K,\F}\) for every closed
geodesically convex realization.

\subsection{Normal response and the nearest-versus-best question}

Let \(\F\) now be a nonempty, closed, geodesically convex smooth embedded
submanifold.  In whitened coordinates define
\[
  \widehat T_G\F
  =\{G^{-1/2}UG^{-1/2}:U\in T_G\F\},
  \qquad
  \widehat N_G\F=(\widehat T_G\F)^\perp.
\]
For \(S_a=G^{-1/2}H_aG^{-1/2}\), let
\[
  \mathfrak Z_a(G)
  =
  \partial_X[\phi_a(\lambda(X))]_{X=\log S_a}.
\]
Here \(\partial_X\) is the Euclidean convex subdifferential in the whitened
symmetric-matrix coordinate \(X\); the affine-invariant tangent/cotangent
identification is applied only after this whitening.

\begin{lemma}[Log-spectral pullback identity]
\label{lem:log-spectral-pullback}
Let \(S\in\SPD^d\), let \(\phi:\R^d\to\R\) be convex and
permutation invariant, and set \(F(Y)=\phi(\lambda(Y))\) on
\(\mathbb S^d\).  For every \(X\in\mathbb S^d\),
\[
  \frac{\dd}{\dd t}\bigg|_{t=0+}
  F\!\left(\log(e^{-tX/2}Se^{-tX/2})\right)
  =
  \max_{Z\in\partial F(\log S)}-\tr(ZX).
\]
Consequently, the whitened AIRM subdifferential of the corresponding relative
log-spectral objective is exactly \(-\partial F(\log S)\), including at
repeated eigenvalues.
\end{lemma}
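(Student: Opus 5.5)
The plan is to reduce the curve $t\mapsto \log(e^{-tX/2}Se^{-tX/2})$ to a first-order expansion, and then apply the standard formula for the one-sided directional derivative of a finite convex function. Set $L=\log S$ and
\[
  Y(t)=\log(e^{-tX/2}Se^{-tX/2}) .
\]
Since $F=\phi\circ\lambda$ is convex and finite on $\mathbb S^d$ (Lewis \cite{lewis1996convex}), it is locally Lipschitz. It is therefore enough to show
\[
  Y(t)=L+tY'(0)+o(t), \qquad t\to0+ .
\]
With that expansion, the chain rule for directional derivatives of locally Lipschitz convex functions along differentiable curves gives
\[
  \frac{\dd}{\dd t}\bigg|_{t=0+}F(Y(t))
  =F'(L;Y'(0))
  =\max_{Z\in\partial F(L)}\tr(ZY'(0)) .
\]
The last equality is the max-formula for finite convex functions. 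The remaining work is to identify $\tr(ZY'(0))$ with $-\tr(ZX)$ for each $Z\in\partial F(L)$.

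For this I will compute $Y'(0)$ with the Daleckii--Krein formula. Diagonalize $S=Q\diag(s)Q^\top$ and write $\widetilde X=Q^\top XQ$. The derivative of $e^{-tX/2}Se^{-tX/2}$ at $t=0$ is $-\tfrac12(XS+SX)$. The Fréchet derivative of $\log$ at $S$ acts in the eigenbasis by Hadamard multiplication with the divided differences $(\log s_i-\log s_j)/(s_i-s_j)$, read as $1/s_i$ on the diagonal. Combining the two,
\[
  [Q^\top Y'(0)Q]_{ij}
  =-\widetilde X_{ij}\,\frac{(s_i+s_j)(\log s_i-\log s_j)}{2(s_i-s_j)} .
\]
On every block where $s_i=s_j$ this factor equals $-1$, so $Y'(0)$ and $-X$ share the same blocks along the eigenspaces of $S$. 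Off those blocks the entries differ.

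The key step, which I expect to be the main obstacle, is to show that this off-block discrepancy is invisible to every subgradient. I will use Lewis's characterization of $\partial F(L)$: every $Z\in\partial F(L)$ has the form $Z=Q'\diag(z)Q'^\top$, where $Q'$ simultaneously diagonalizes $L$ with $\lambda(L)$ ordered and $z\in\partial\phi(\lambda(L))$. Because $\log$ is strictly increasing, $L$ and $S$ have the same eigenspaces. Hence $Z$ commutes with $S$, and $Q^\top ZQ$ is block diagonal with respect to the eigenvalue multiplicities of $S$, possibly after re-choosing $Q$ inside each eigenspace. Repeated eigenvalues are handled exactly here: $Z$ may be non-diagonal inside a block, but on that block $Y'(0)=-X$ holds exactly. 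It follows that
\[
  \tr(ZY'(0))=\tr\bigl((Q^\top ZQ)(Q^\top Y'(0)Q)\bigr)=-\tr(ZX),
\]
since only block entries contribute to the trace pairing. Substituting into the max-formula gives the displayed identity.

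For the consequence, the whitened AIRM tangent direction at $S$ is $X$, through the curve $e^{-tX/2}Se^{-tX/2}$ that represents the geodesic motion of $G$ in relative coordinates. The identity says that the support function of $-\partial F(\log S)$ equals the directional derivative in every direction $X$. A closed convex set is determined by its support function, so the whitened AIRM subdifferential, defined as the set whose support function is the directional derivative (valid for geodesically convex objectives by \cref{thm:structured-log-spectral-mother}), is exactly $-\partial F(\log S)$. This holds with no simplicity assumption on the spectrum, because the commutation argument already covered repeated eigenvalues.
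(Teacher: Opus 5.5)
Your proposal is correct and follows the paper's proof in all essentials. Both use the max formula for the one-sided directional derivative of the finite convex function $F$ along the smooth curve $\log(e^{-tX/2}Se^{-tX/2})$, Lewis's theorem that every $Z\in\partial F(\log S)$ commutes with $S$, and the support-function argument for the subdifferential. The one difference is cosmetic: the paper uses self-adjointness of $D\log_S$ together with $D\log_S[Z]=S^{-1}Z$ for commuting $Z$, whereas you verify $\tr(Z\,Y'(0))=-\tr(ZX)$ directly from the Daleckii--Krein divided differences, which is the same fact written in coordinates.
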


\begin{theorem}[Normal-response law]
\label{thm:normal-response-law}
A point \(G\in\F\) globally minimizes the information objective in
\cref{thm:structured-log-spectral-mother} if and only if there exist
\[
  \theta\in\partial\rho(\Omega_1(G),\ldots,\Omega_q(G)),
  \qquad
  Z_a\in\mathfrak Z_a(G),
\]
such that
\[
  \sum_{a=1}^q\theta_aZ_a\in\widehat N_G\F.
\]
Thus optimality is exact cancellation of the tangent information force.

For a single \(H\), let \(P=P_\F(H)\) be its unique nearest affine-invariant point
in \(\F\) and set \(S=P^{-1/2}HP^{-1/2}\).  Then
\(\log S\in\widehat N_P\F\), and \(P\) also minimizes the log-spectral
difficulty \(\Omega_{\phi,H}\) if and only if
\[
  \mathfrak Z_\phi(P)\cap\widehat N_P\F\ne\emptyset.
\]
\end{theorem}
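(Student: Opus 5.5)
The plan is to reduce both parts of \cref{thm:normal-response-law} to first-order optimality for a geodesically convex function on the closed geodesically convex set \(\F\), computed in whitened coordinates.

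\emph{Directional derivatives.} Fix \(G\in\F\) and a tangent direction \(U\in T_G\F\). Write \(X=G^{-1/2}UG^{-1/2}\in\widehat T_G\F\) and use the geodesic \(\gamma(t)=G^{1/2}e^{tX}G^{1/2}\). Along \(\gamma\), the relative matrix \(\gamma(t)^{-1/2}H_a\gamma(t)^{-1/2}\) is unitarily similar to \(e^{-tX/2}S_ae^{-tX/2}\), so each log-spectrum is preserved. \Cref{lem:log-spectral-pullback} then gives
\[
  \Omega_a'(G;X)=\max_{Z\in\mathfrak Z_a(G)}-\tr(ZX).
\]
Each \(\Omega_a\) is geodesically convex along \(\gamma\) and \(\rho\) is convex and nondecreasing. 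The standard chain rule for a nondecreasing convex outer function composed with convex inner functions, applied to the convex functions \(t\mapsto\Omega_a(\gamma(t))\), therefore gives
\[
  \mathcal J'(G;X)=\max_{\theta\in\partial\rho,\ Z_a\in\mathfrak Z_a(G)}-\tr\Bigl(\sum_a\theta_aZ_aX\Bigr),
\]
with \(\theta\ge0\). The set \(\mathcal W=\{\sum_a\theta_aZ_a\}\) is compact and convex: \(\partial\rho\) is a compact subset of the nonnegative orthant, and each \(\mathfrak Z_a\) is compact and convex.

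\emph{Optimality criterion.} Since \(\mathcal J\) is geodesically convex on \(\F\), \(G\) is a global minimizer if and only if \(\mathcal J'(G;X)\ge0\) for every feasible direction. For a smooth embedded geodesically convex submanifold, the feasible directions at \(G\) are all of \(\widehat T_G\F\), because the geodesic through \(G\) in any tangent direction stays in \(\F\) locally. This step deserves care: I would argue it from total geodesicity. Near \(G\), the geodesic segment to any point of \(\F\) lies in \(\F\), and these initial velocities fill the tangent space. Combined with closedness of \(\F\), this gives the required directions.

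Because \(\widehat T_G\F\) is a linear subspace, the condition becomes
\[
  \min_{X\in\widehat T}\ \max_{W\in\mathcal W}\ -\tr(WX)\ge0.
\]
By a minimax theorem (\(\mathcal W\) compact and convex, the objective bilinear), this equals \(\max_{W}\min_{X}\). The inner minimum is \(0\) if \(W\perp\widehat T\) and \(-\infty\) otherwise. So the condition holds if and only if some \(W\in\mathcal W\) lies in \(\widehat N_G\F\), which is the first claim.

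\emph{The projection statement.} \(P=P_\F(H)\) is the unique minimizer of \(\tfrac12 d_{\AI}(G,H)^2\) over \(\F\). This is the case \(\phi(x)=\tfrac12\|x\|^2\), which is permutation invariant and convex, with \(\rho\) the identity, since \(d_{\AI}(G,H)^2=\|\log\lambda(G^{-1/2}HG^{-1/2})\|^2\). Its subdifferential at \(\log S\) is \(\{\log S\}\), so the first part yields \(\log S\in\widehat N_P\F\). Applying the first part again with \(q=1\), \(\rho=\mathrm{id}\) and a general \(\phi\) gives exactly the stated criterion \(\mathfrak Z_\phi(P)\cap\widehat N_P\F\ne\emptyset\).

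I expect the main obstacle to be the chain rule and minimax step at nonsmooth points, where eigenvalues repeat and \(\partial\rho\) is not a singleton. I would handle it by working only with one-sided directional derivatives along geodesics, where convexity along each geodesic makes them exist, and by relying on compactness of the subdifferentials.
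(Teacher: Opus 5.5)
Your proposal is correct and follows the paper's route. Both arguments use \cref{lem:log-spectral-pullback} for each \(\Omega_a\), then the monotone convex composition rule for \(\rho\), then convex first-order optimality on the whitened tangent space. Your directional-derivative and minimax argument, together with the check that every tangent geodesic direction is feasible, just makes explicit the ``constrained Fermat rule'' that the paper cites without detail. The one small difference is in the projection part: you get \(\log S\in\widehat N_P\F\) as the special case \(\phi=\tfrac12\norm{x}^2\), \(\rho=\mathrm{id}\) of the first part, whereas the paper quotes the whitened squared-distance gradient \(-\log(P^{-1/2}HP^{-1/2})\) directly.
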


\begin{corollary}[Sharp scale-closed threshold]
\label{thm:scale-closed-threshold}
Let \(d\ge2\) and let \(\F\subset\SPD^d\) be nonempty and closed under
positive rescaling.  Define
\[
  K_\F^\star(H)=\inf_{G\in\F}\kappa(G^{-1}H),
  \qquad
  \delta_\F(H)=d_{\AI}(H,\F),
  \qquad
  \alpha_d=\sqrt{\frac{d}{\lfloor d^2/4\rfloor}}.
\]
Then
\[
  \log K_\F^\star(H)\ge\alpha_d\delta_\F(H),
  \qquad
  K_\F^\star(H)\ge\exp\{\alpha_d d_{\AI}(H,\F)\}.
\]
The constant \(\alpha_d\) is sharp over general scale-closed families.
\end{corollary}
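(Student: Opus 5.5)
The plan is to bound the distance from \(H\) to \(\F\) using one rescaled candidate at a time, and then take the infimum over \(G\in\F\). Fix \(G\in\F\) and put \(S=H^{-1/2}GH^{-1/2}\), with log-eigenvalues \(y_1\le\cdots\le y_d\). By congruence invariance of \(d_{\AI}\),
\[
  d_{\AI}(H,cG)=\fro{\log(cS)}=\Bigl(\sum_{i=1}^d (y_i+\log c)^2\Bigr)^{1/2}
  \qquad (c>0).
\]
Since \(\F\) is closed under positive rescaling, \(cG\in\F\) for every \(c>0\). Hence
\[
  \delta_\F(H)\le\min_{c>0}\fro{\log(cS)}=\Bigl(\sum_i (y_i-\bar y)^2\Bigr)^{1/2},
\]
where \(\bar y\) is the mean of the \(y_i\). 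Write \(w=y_d-y_1=\log\kappa(G^{-1}H)\) for the spread.

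The key step is a purely scalar variance inequality. It states that every \(y\in\R^d\) with spread \(w\) satisfies
\[
  \sum_i (y_i-\bar y)^2\le \frac{\lfloor d^2/4\rfloor}{d}\,w^2 .
\]
To prove it, fix \(m=y_1\) and view \(y\mapsto\sum_i (y_i-\bar y)^2\) as a convex quadratic on the box \([m,m+w]^d\). Its maximum is therefore attained at a vertex, where each \(y_i\in\{m,m+w\}\). If \(k\) coordinates sit at the top, the value is \(w^2k(d-k)/d\). Maximizing over integers \(k\) gives \(k(d-k)\le\lfloor d^2/4\rfloor\).

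Combining the two steps gives \(\delta_\F(H)\le w/\alpha_d=\log\kappa(G^{-1}H)/\alpha_d\) for every \(G\in\F\). Taking the infimum over \(G\) yields \(\alpha_d\delta_\F(H)\le\log K_\F^\star(H)\). Exponentiating gives the second form of the bound. The argument applies even when the infimum is not attained.

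For sharpness, take \(H=I\), \(k=\lfloor d/2\rfloor\), and \(S_0=\diag(1,\ldots,1,e^w,\ldots,e^w)\) with \(k\) entries equal to \(e^w\) (any \(w>0\)). Let \(\F=\{cS_0:c>0\}\). This ray is closed in \(\SPD^d\) and scale-closed. Every element of \(\F\) has condition number \(e^w\), so \(K_\F^\star=e^w\). The vertex computation is exact for this spectrum, so \(\delta_\F(I)=w\sqrt{k(d-k)/d}=w/\alpha_d\), and equality holds. I expect the only genuine obstacle to be the vertex/convexity argument in the variance bound, in particular the integer optimization of \(k(d-k)\). The rest follows directly from congruence invariance and scale closure.
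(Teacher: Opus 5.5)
Your proposal is correct and follows essentially the same route as the paper: you use scale closure to reduce \(\min_{c>0}d_{\AI}(H,cG)\) to the centered log-spectrum, prove the oscillation--variance inequality by maximizing a convex function at a vertex of the box, and take the infimum over \(G\in\F\). Your sharpness example, the ray through a two-level \(S_0\) with \(H=I\), is the congruence-equivalent mirror of the paper's choice \(\F=\{cI:c>0\}\) with \(H\) carrying the two-level equality spectrum.
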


\begin{proposition}[Monotonicity and full-SPD lower bound]
\label{prop:monotonicity}
If \(\F_1\subset\F_2\) and every admissible \(\F_1\)-path is an admissible
\(\F_2\)-path with the same length, then
\[
  D_{K,\F_1}(S_0;H)\ge D_{K,\F_2}(S_0;H).
\]
In particular, \(D_{K,\F}(S_0;H)\ge D_K(S_0)\) whenever restricted paths are
ambient SPD paths.  Also
\(D_{K_1,\F}\ge D_{K_2,\F}\) for \(K_1\le K_2\).
\end{proposition}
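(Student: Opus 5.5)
The claim is Proposition \ref{prop:monotonicity}. All three parts follow by comparing the feasible sets in the variational \cref{def:restricted-complexity}. The basic principle: if one infimum is taken over a subset of the competitors for another infimum, and the objective values agree, then the first infimum is at least the second. The convention \(\inf\emptyset=+\infty\) covers the empty and disconnected cases. I will handle each inequality by exhibiting the relevant inclusion of admissible competitor sets.

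\emph{Nested families.} Fix \(G_0\in\F_1\subset\F_2\) and the same \(S_0=H^{-1/2}G_0H^{-1/2}\). Since \(\F_1\subset\F_2\), we have \(\mathcal S_{\F_1}(H)\subset\mathcal S_{\F_2}(H)\). Take any curve \(S_{[0,T]}\) admissible for \(\F_1\) with \(S(0)=S_0\) and \(S(T)\in\Ck\). By hypothesis it is an admissible \(\F_2\)-path with the same \(\Len_{\AI}\), and it satisfies the same endpoint constraints. Hence the competitor set for \(D_{K,\F_1}\) embeds into that for \(D_{K,\F_2}\), with equal objective values. Taking infima gives \(D_{K,\F_1}(S_0;H)\ge D_{K,\F_2}(S_0;H)\). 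If the \(\F_1\) set is empty, the left side is \(+\infty\) and the inequality is trivial.

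\emph{Full-SPD bound.} Apply the same argument with \(\F_2=\SPD^d\), realized with ordinary absolutely continuous paths and the ambient AIRM length. The assumption that restricted paths are ambient SPD paths supplies exactly the inclusion hypothesis. Then I identify the full-SPD variational value with \(D_K(S_0)=d_{\AI}(S_0,\Ck)\). This is the only step with content, and I expect it to be the main obstacle. Because the AIRM distance is a length metric on the Hadamard manifold \(\SPD^d\), the distance from \(S_0\) to \(\Ck\) equals the infimum of lengths of curves from \(S_0\) ending in \(\Ck\). In one direction, every such curve has length at least \(d_{\AI}(S_0,S(T))\ge d_{\AI}(S_0,\Ck)\). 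In the other direction, \(\Ck\) is closed and nonempty, so there is a nearest point \(Y\in\Ck\), and the geodesic from \(S_0\) to \(Y\) attains that distance. Equivalently, \(\Ck\) is a sublevel set of the geodesically convex \(\log\kappa\) (\cref{thm:structured-log-spectral-mother}), so \cref{thm:restricted-geometrodynamic-mother} applies directly with \(\F=\SPD^d\).

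\emph{Threshold monotonicity.} If \(K_1\le K_2\), then \(\mathcal C_{K_1}\subset\mathcal C_{K_2}\). Every curve feasible for threshold \(K_1\) therefore ends in \(\mathcal C_{K_2}\) and is feasible for \(K_2\), with the same family, path class and length. Taking infima gives \(D_{K_1,\F}\ge D_{K_2,\F}\).
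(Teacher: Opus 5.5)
Your proposal is correct and follows the paper's argument: all three inequalities come from inclusion of the feasible path sets, with equal lengths, in \cref{def:restricted-complexity}, and \(\inf\emptyset=+\infty\) handles empty or unreachable targets. You make explicit the identification of the full-SPD value with \(D_K(S_0)=d_{\AI}(S_0,\Ck)\), which the paper leaves implicit; only the easy direction is needed for the lower bound (every curve from \(S_0\) into \(\Ck\) has length at least \(d_{\AI}(S_0,\Ck)\)), so your attainment argument is extra rather than the main obstacle.
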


\begin{proposition}[Intrinsic submanifold distance]
\label{prop:submanifold-distance}
If \(\mathcal S_\F(H)\) is a smooth embedded submanifold equipped with the
ambient induced length, then
\[
  D_{K,\F}(S_0;H)
  =
  d_{\mathcal S_\F(H)}
  (S_0,\mathcal S_\F(H)\cap\Ck).
\]
The distance is \(+\infty\) when the target lies outside the admissible
path-connected component.
\end{proposition}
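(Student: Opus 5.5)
The statement to prove is \cref{prop:submanifold-distance}. The plan is to read \cref{def:restricted-complexity} with the admissible path class specialized to absolutely continuous curves lying in \(\mathcal M:=\mathcal S_\F(H)\). Each such curve carries the ambient affine-invariant length
\[
  \Len_{\AI}(S_{[0,T]})=\int_0^T\fro{S_t^{-1/2}\dot S_tS_t^{-1/2}}\,\dd t .
\]
Under the standing hypothesis, \(\mathcal M\) is a smooth embedded submanifold with the induced metric. For a curve in \(\mathcal M\), the integrand is exactly the Riemannian norm of \(\dot S_t\) in \(T_{S_t}\mathcal M\). Hence the ambient length of an admissible curve equals its intrinsic length in \((\mathcal M,g_{\mathrm{ind}})\). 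By definition, the intrinsic distance \(d_{\mathcal M}(S_0,S_1)\) is the infimum of this length over absolutely continuous (equivalently, piecewise \(C^1\)) curves in \(\mathcal M\) joining \(S_0\) to \(S_1\). The endpoint-free infimum in \cref{def:restricted-complexity} therefore splits as an iterated infimum:
\[
  D_{K,\F}(S_0;H)=\inf_{S_1\in\mathcal M\cap\Ck}\ \inf_{\gamma:S_0\to S_1}\Len(\gamma)
  =\inf_{S_1\in\mathcal M\cap\Ck}d_{\mathcal M}(S_0,S_1),
\]
which is the claimed \(d_{\mathcal M}(S_0,\mathcal M\cap\Ck)\). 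Reparametrizing to a common interval \([0,T]\) does not affect length, so the free terminal time \(T\) is harmless.

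The one technical point is that the two path classes must agree. The class in the definition consists of absolutely continuous curves in \(\mathcal M\), while \(d_{\mathcal M}\) is usually defined through piecewise smooth curves. I would argue that both infima coincide. For the first direction, piecewise \(C^1\) curves are absolutely continuous, which gives \(\ge\) for the \(d_{\mathcal M}\) side. For the second direction, I would approximate an absolutely continuous curve in local charts of the embedded submanifold by piecewise smooth curves with nearly the same length. Alternatively, I would invoke the standard fact that on a smooth Riemannian manifold the length metric generated by absolutely continuous curves equals the Riemannian distance; this gives \(\le\). This identification is the step I expect to need the most care. However, it is a standard property of finite-dimensional Riemannian manifolds, and the metric is the smooth restriction of the AIRM.

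Finally, the convention for \(+\infty\) follows from the conventions on infima. If \(\mathcal M\cap\Ck\) is empty, or if no point of it lies in the path-connected component of \(S_0\), then no admissible curve exists. In that case the definition returns \(+\infty\). Likewise \(d_{\mathcal M}(S_0,S_1)=+\infty\) whenever \(S_1\) lies in a different component, so the infimum over the target is also \(+\infty\), and the two sides agree in every case.
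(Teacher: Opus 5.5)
Your proposal is correct and follows essentially the same argument as the paper. The paper also identifies admissible paths with absolutely continuous curves in \(\mathcal S_\F(H)\), notes that their ambient affine-invariant length is the induced length, reads the infimum as the intrinsic distance to the target, and obtains \(+\infty\) when no connecting path exists. Your remark that absolutely continuous and piecewise-smooth curves generate the same Riemannian distance only makes explicit a standard step the paper's terse proof takes for granted.
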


\section{Diagonal and Block Matrix Families}
\label{sec:diag-block}

Diagonal and block families are the closest finite-dimensional models of
coordinatewise and grouped adaptive preconditioning. Their reachability admits a
clean semidefinite characterization. This connects restricted geometric
complexity to the classical literature on scaling and equilibration
\cite{bauer1963optimally,vandersluis1969equilibration,
shapiro1982optimally,shapiro1985block,sinkhorn1967doubly,
knight2013balancing,qu2024optimal}, to condition-number optimization through
convex programming \cite{marechal2009optimizing,lu2011minimizing}, and to
semidefinite feasibility \cite{vandenberghe1996semidefinite,boyd2004convex}.
The role of this section is to place those reachability questions inside the
structured affine-invariant target-distance benchmark and to record primal and
dual certificates in the notation used by \(D_{K,\F}\).

\begin{center}
\begin{tabular}{p{0.28\linewidth}p{0.30\linewidth}p{0.32\linewidth}}
\toprule
Topic & Closest classical object & Role here \\
\midrule
Diagonal/block scaling &
Optimal scaling and equilibration criteria &
Endpoint reachability for a restricted metric family \\
Condition-number minimization &
Variational and convex-programming formulations &
The same target embedded in an affine-invariant path-distance benchmark \\
Semidefinite feasibility &
Primal feasibility and dual separation &
Certificate language for finite \(D_{K,\F}\) or unreachable targets \\
\bottomrule
\end{tabular}
\end{center}

\subsection{Diagonal reachability}

Let
\[
  \F_{\diag}=\{D=\diag(e^{x_1},\ldots,e^{x_d}):x\in\R^d\}.
\]
Inside this family the affine-invariant line element is Euclidean in log
coordinates:
\[
  \Len_{\diag}(x_{[0,T]})=\int_0^T\norm{\dot x_t}_2\,\dd t.
\]
The diagonal target set is
\[
  \mathcal X_K(H)=
  \{x\in\R^d:\kappa(\diag(e^{-x})H)\le K\}.
\]

\begin{lemma}[Diagonal distance identity]
\label{lem:diag-distance-identity}
The map
\[
  x\mapsto H^{-1/2}\diag(e^x)H^{-1/2}
\]
is an isometric realization of the diagonal metric family with its
affine-invariant induced length.  With
\(S_0=H^{-1/2}\diag(e^{x_0})H^{-1/2}\), write
\(D_{K,\diag}(x_0;H):=D_{K,\diag}(S_0;H)\).  The set
\(\mathcal X_K(H)\) is closed, and
\[
  D_{K,\diag}(x_0;H)=\dist_{\ell^2}(x_0,\mathcal X_K(H)).
\]
\end{lemma}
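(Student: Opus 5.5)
The proof has three parts: an isometry computation, a closedness check, and a reduction of the path infimum to a Euclidean distance. First I will show that $\iota(x)=H^{-1/2}\diag(e^x)H^{-1/2}$ is an isometric realization. Congruence by the fixed matrix $H^{-1/2}$ is an isometry of the affine-invariant metric, so it suffices to compute the line element along $D(x)=\diag(e^{x})$. For an absolutely continuous path $x_t$ we have $\dot D=D\,\diag(\dot x)$. Since diagonal matrices commute, $D^{-1/2}\dot D D^{-1/2}=\diag(\dot x)$, so $\norm{\dot D}_{D,\AI}=\fro{\diag(\dot x)}=\norm{\dot x}_2$. Hence $\Len_{\AI}(\iota(x_{[0,T]}))=\int_0^T\norm{\dot x_t}_2\,\dd t=\Len_{\diag}(x_{[0,T]})$. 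The map $\iota$ is injective, because $x\mapsto\diag(e^x)$ is injective and congruence is invertible. Therefore admissible relative paths in $\mathcal S_{\F_{\diag}}(H)$ correspond bijectively to absolutely continuous log-coordinate paths, with equal lengths.

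Second, I will identify the target and prove closedness. The relative endpoint $S=\iota(x)$ lies in $\Ck$ if and only if $\kappa(S)=\kappa(\diag(e^{x})^{-1}H)=\kappa(\diag(e^{-x})H)\le K$. Here I use the earlier similarity remark $\kappa(G^{-1}H)=\kappa(H^{-1/2}GH^{-1/2})$ together with $\kappa(S^{-1})=\kappa(S)$. So $\iota^{-1}(\Ck)=\mathcal X_K(H)$. Closedness follows because $x\mapsto\kappa(\diag(e^{-x})H)$ is continuous on $\R^d$: the extreme eigenvalues of $\diag(e^{-x/2})H\diag(e^{-x/2})$ are continuous in $x$ and strictly positive. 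Thus $\mathcal X_K(H)$ is a sublevel set of a continuous function. The set may be empty, in which case both sides of the claimed identity equal $+\infty$. The same holds under the convention $\dist(x_0,\emptyset)=+\infty$.

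Third, I will prove the distance identity. By \cref{def:restricted-complexity} and the path correspondence above, $D_{K,\diag}(x_0;H)$ is the infimum of $\int\norm{\dot x}_2$ over absolutely continuous paths in $\R^d$ from $x_0$ to $\mathcal X_K(H)$. The family $\R^d$ is path connected, so there is no component obstruction.

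1. For the upper bound, take any $y\in\mathcal X_K(H)$ and use the straight segment from $x_0$ to $y$. Its length is $\norm{y-x_0}_2$, so $D_{K,\diag}\le\dist_{\ell^2}(x_0,\mathcal X_K(H))$.
2. For the lower bound, any path ending at $y$ has length at least $\norm{y-x_0}_2$, since $\norm{x_T-x_0}_2=\norm{\int\dot x}_2\le\int\norm{\dot x}_2$. Taking the infimum gives equality.
3. Closedness of $\mathcal X_K(H)$, when it is nonempty, ensures the Euclidean distance is attained. This is not needed for the identity itself.

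The only delicate point is making sure the admissible path class in the realization matches absolutely continuous log-coordinate paths. I will handle this by fixing the chart $\iota$ as the realization. Then absolute continuity in $x$ transfers through the smooth diffeomorphism onto its image, and the length formula transfers by the computation in the first part.
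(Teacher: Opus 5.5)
Your proposal is correct and follows essentially the same route as the paper: congruence invariance plus the commuting diagonal computation gives the Euclidean line element in log coordinates, $\mathcal X_K(H)$ is closed as the preimage of $\Ck$ under a continuous map, and the path infimum reduces to Euclidean distance. The only difference is that you write out the straight-segment upper bound and the lower bound $\norm{x_T-x_0}_2\le\int\norm{\dot x}_2$ explicitly, whereas the paper leaves them implicit.
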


\begin{theorem}[Diagonal LMI reachability]
\label{thm:diag-lmi}
For \(K\ge1\), the following are equivalent:
\begin{enumerate}[leftmargin=*,itemsep=0.1em]
  \item there exists a positive diagonal \(D\) with
  \(\kappa(D^{-1}H)\le K\);
  \item there exists a positive diagonal \(E\) with
  \[
    E\preceq H\preceq K E.
  \]
\end{enumerate}
Consequently,
\[
  K_{\diag}^\ast(H)
  =
  \inf\{K\ge1:\exists E\succ0\ \mathrm{diagonal},\ E\preceq H\preceq KE\}.
\]
For fixed \(K\), diagonal reachability is an LMI feasibility problem.
\end{theorem}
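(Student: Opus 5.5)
The argument rests on the scale invariance of the condition number, which converts a two-sided eigenvalue bound into a Loewner sandwich. For a positive diagonal \(D\) with \(\kappa(D^{-1}H)\le K\) we rescale \(D\) so that the generalized eigenvalues of \(Hv=\lambda Dv\) lie in \([1,K]\). The converse direction reads the same sandwich backward.

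\emph{(1)\(\Rightarrow\)(2).} Let \(\lambda_{\min}\) and \(\lambda_{\max}\) be the extreme eigenvalues of the similar matrix \(D^{-1/2}HD^{-1/2}\); they are positive, and \(\lambda_{\max}/\lambda_{\min}\le K\). Set \(E=\lambda_{\min}D\), which is positive diagonal. Then
\[
  \lambda_{\min}D\preceq H\preceq\lambda_{\max}D.
\]
The left inequality is exactly \(E\preceq H\). For the right one, \(\lambda_{\max}\le K\lambda_{\min}\) gives \(\lambda_{\max}D\preceq KE\), hence \(H\preceq KE\).

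\emph{(2)\(\Rightarrow\)(1).} Take \(D=E\). Congruence by \(E^{-1/2}\) turns the sandwich into \(I\preceq E^{-1/2}HE^{-1/2}\preceq KI\). So every eigenvalue of \(E^{-1/2}HE^{-1/2}\) lies in \([1,K]\). This matrix is similar to \(E^{-1}H\), so \(\kappa(E^{-1}H)\le K\).

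\emph{Formula for \(K_{\diag}^\ast\).} By definition, \(K_{\diag}^\ast(H)=\inf_D\kappa(D^{-1}H)\). The equivalence shows that the set of \(K\ge1\) for which (1) holds coincides with the set for which (2) holds. Since \(\kappa\ge1\) always, the infimum over \(D\) of \(\kappa(D^{-1}H)\) equals the infimum of that common set of admissible \(K\), and the displayed identity follows.

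\emph{LMI feasibility.} For fixed \(K\), the unknowns are \(e\in\R^d\) with \(E=\diag(e)\), and both \(H-E\succeq0\) and \(KE-H\succeq0\) are affine in \(e\). Positivity of \(E\) need not be imposed separately: \(H\preceq KE\) with \(H\succ0\) forces \(E\succ0\). Alternatively it can be imposed as \(e\ge0\), since \(KE\succeq H\succ0\) already excludes degenerate solutions. The feasible set is therefore a spectrahedron, and reachability is an LMI feasibility problem.

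The only step requiring any care is the normalization by \(\lambda_{\min}\) together with the similarity between \(D^{-1}H\) and \(D^{-1/2}HD^{-1/2}\). This step is where the scale freedom of the diagonal family is used.
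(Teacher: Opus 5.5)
Your proof is correct and is essentially the paper's argument: you rescale \(D\) by the smallest generalized eigenvalue to get \(E=\lambda_{\min}D\) with \(E\preceq H\preceq KE\), and you read the sandwich backward for the converse; the paper phrases both directions through generalized Rayleigh quotients instead of the whitened matrix \(D^{-1/2}HD^{-1/2}\), which expresses the same fact. Your explicit treatment of the infimum formula and of the LMI form fills in steps the paper leaves implicit, including the observation that \(KE\succeq H\succ0\) makes \(E\succ0\) automatic.
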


\begin{remark}[Certificate interpretation]
\Cref{thm:diag-lmi} separates reachability from path length. If the LMI is
infeasible, then \(D_{K,\diag}(x_0;H)=+\infty\) for every initial diagonal
metric. If it is feasible, the remaining problem is the intrinsic distance from
the initial log-scale \(x_0\) to the feasible set \(\mathcal X_K(H)\).
Semidefinite duality can therefore provide certificates that no diagonal
preconditioner can reach a desired condition-number threshold.
\end{remark}

\begin{corollary}[Aligned diagonal optimum]
\label{cor:diag-aligned}
If \(H=\diag(h_1,\ldots,h_d)\) and \(G_0\) is diagonal, then
\[
  D_{K,\diag}(S_0;H)=D_K(S_0).
\]
\end{corollary}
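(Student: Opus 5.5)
The approach is to reduce the statement to \cref{lem:diag-distance-identity}, which already identifies \(D_{K,\diag}(x_0;H)\) with \(\dist_{\ell^2}(x_0,\mathcal X_K(H))\). When \(H=\diag(h_1,\ldots,h_d)\) is diagonal, everything commutes. With \(G_0=\diag(e^{x_0})\), the relative matrix \(S_0=H^{-1/2}G_0H^{-1/2}=\diag(e^{x_{0,i}}/h_i)\) is diagonal. Its ordered log-eigenvalues \(y_i\) are therefore a permutation of \(x_{0,i}-\log h_i\), so \cref{thm:full-spd-benchmark} gives
\[
  D_K(S_0)=\min_{c}\Bigl(\sum_i\dist(x_{0,i}-\log h_i,[c,c+\log K])^2\Bigr)^{1/2}.
\]

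Next I describe \(\mathcal X_K(H)\) explicitly. For \(x\in\R^d\), the matrix \(\diag(e^{-x})H=\diag(h_ie^{-x_i})\) is diagonal. Its condition number is at most \(K\) if and only if there is some \(c\) with \(x_i-\log h_i\in[c,c+\log K]\) for all \(i\). (The sign convention versus \(S\) is immaterial, since the interval width is symmetric under negation.) Hence \(\mathcal X_K(H)=\bigcup_c \bigl(\log h+\prod_i[c,c+\log K]\bigr)\).

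The \(\ell^2\) distance from \(x_0\) to a union is the infimum over \(c\) of the distance to each box. Distance to a box separates coordinatewise:
\[
  \dist_{\ell^2}(x_0,\log h+[c,c+\log K]^d)^2=\sum_i\dist(x_{0,i}-\log h_i,[c,c+\log K])^2 .
\]
Taking the infimum over \(c\) reproduces exactly the formula for \(D_K(S_0)\). The infimum is attained because the function is continuous and coercive in \(c\), matching the min in \cref{thm:full-spd-benchmark}. Since the sum is permutation invariant, the ordering of the \(y_i\) is irrelevant.

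No step is a genuine obstacle; the only care needed is bookkeeping of conventions. Specifically, I must confirm that \(\mathcal X_K(H)\) uses \(\kappa(\diag(e^{-x})H)\), which matches \(\kappa(G^{-1}H)=\kappa(S)\), and that the realization \(x\mapsto H^{-1/2}\diag(e^x)H^{-1/2}\) in \cref{lem:diag-distance-identity} is the one used to define \(S_0\). As a sanity check, \cref{prop:monotonicity} already gives \(D_{K,\diag}\ge D_K\). The computation above supplies the matching upper bound: the clipped diagonal point is itself diagonal, so the ambient-optimal clipping of \cref{thm:full-spd-benchmark} is realized inside the diagonal family.
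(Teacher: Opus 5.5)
Your proof is correct, but its main line differs from the paper's. The paper argues structurally. \Cref{prop:monotonicity} gives \(D_{K,\diag}(S_0;H)\ge D_K(S_0)\). For the reverse inequality, the paper notes that the full-SPD optimal endpoint is diagonal: it keeps the eigenvectors of \(S_0\) and clips its log-spectrum, as in the proof of \cref{thm:full-spd-benchmark}. The ambient geodesic to that endpoint is therefore an admissible diagonal path of length \(D_K(S_0)\). This is exactly the argument you relegate to your closing sanity check.

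Your main argument instead computes both sides explicitly.
\begin{enumerate}[leftmargin=*,itemsep=0.1em]
  \item \Cref{lem:diag-distance-identity} turns the left side into \(\dist_{\ell^2}(x_0,\mathcal X_K(H))\).
  \item For diagonal \(H\), you identify \(\mathcal X_K(H)\) as the union over \(c\) of the cubes \(\log h+[c,c+\log K]^d\).
  \item Squared distance to a box separates coordinatewise. This reproduces the \(\min_c\) formula of \cref{thm:full-spd-benchmark} with \(y=x_0-\log h\); the ordering does not matter because the sum is permutation invariant.
\end{enumerate}

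Each route buys something different. Yours avoids the monotonicity proposition and any discussion of which ambient geodesics stay inside the family. It also yields the common value and the optimal diagonal endpoint explicitly, namely \(\log h+\Pi_{[c^\star,c^\star+\log K]}(x_0-\log h)\) with \(c^\star\) the minimizing shift. The paper's route is shorter and more portable. The same ``the ambient optimizer already lies in the family'' argument applies whenever the clipping geodesic from \(S_0\) stays in \(\mathcal S_\F(H)\). It needs no closed-form description of the structured target set, which your box decomposition relies on and which is special to diagonal \(H\).
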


Thus diagonal geometry can be optimal when the target curvature is already
aligned with the coordinate axes. Its loss comes from directional mismatch.

\subsection{Block reachability}

Let the coordinates be split as
\[
  \R^d=V_1\oplus\cdots\oplus V_m,
\qquad \dim V_j=d_j,
\]
and let \(\F_{\mathrm{block}}\) be the corresponding block-diagonal positive
definite family.

\begin{theorem}[Block LMI reachability]
\label{thm:block-lmi}
For \(K\ge1\), the following are equivalent:
\begin{enumerate}[leftmargin=*,itemsep=0.1em]
  \item there exists \(G\in\F_{\mathrm{block}}\) with
  \(\kappa(G^{-1}H)\le K\);
  \item there exists \(E\in\F_{\mathrm{block}}\) with
  \[
    E\preceq H\preceq K E.
  \]
\end{enumerate}
Therefore block reachability is also an LMI feasibility problem.
\end{theorem}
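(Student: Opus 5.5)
The plan is to reduce the block statement to the same scaling argument behind \cref{thm:diag-lmi}, using only that \(\F_{\mathrm{block}}\) is a convex cone closed under positive rescaling and that the condition number of a generalized eigenproblem is encoded by a Loewner sandwich. Throughout, \(\kappa(G^{-1}H)\) means the ratio \(\lambda_{\max}/\lambda_{\min}\) of the generalized eigenvalues of \(Hv=\lambda Gv\), equivalently of \(G^{-1/2}HG^{-1/2}\). For \(G\succ0\), the spectrum of \(G^{-1/2}HG^{-1/2}\) lies in \([\lambda_{\min},\lambda_{\max}]\) exactly when \(\lambda_{\min}G\preceq H\preceq\lambda_{\max}G\), by congruence.

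For (1)\(\Rightarrow\)(2), take \(G\in\F_{\mathrm{block}}\) with \(\kappa(G^{-1}H)\le K\). Let \(\mu=\lambda_{\min}(G^{-1/2}HG^{-1/2})>0\), which is positive since \(H\succ0\). Then \(\mu G\preceq H\preceq \mu\kappa G\preceq K\mu G\). Set \(E=\mu G\); it is block diagonal and positive definite because the block family is closed under positive scalars. This gives \(E\preceq H\preceq KE\).

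For (2)\(\Rightarrow\)(1), take \(E\in\F_{\mathrm{block}}\) with \(E\preceq H\preceq KE\). Conjugating by \(E^{-1/2}\) gives \(I\preceq E^{-1/2}HE^{-1/2}\preceq KI\). So every generalized eigenvalue lies in \([1,K]\), and \(\kappa(E^{-1}H)\le K\). Take \(G=E\).

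For the LMI claim, parametrize \(E=\blockdiag(E_1,\ldots,E_m)\) linearly by its symmetric blocks \(E_j\in\mathbb S^{d_j}\). Fix \(K\). The conditions \(H-E\succeq0\), \(KE-H\succeq0\) and \(E\succ0\) are linear matrix inequalities in the entries of the \(E_j\). Positivity of \(E\) is in fact implied by \(KE\succeq H\succ0\), so the strict constraint can be dropped. The feasibility problem is therefore a standard LMI.

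There is essentially no obstacle. The only points needing care are the positivity of \(\mu\), which needs \(H\succ0\), and the fact that \(E\succ0\) is automatic, so the feasibility set is a closed LMI set.
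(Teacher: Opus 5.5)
Your proof is correct and takes essentially the paper's route: for (1)\(\Rightarrow\)(2) you rescale \(G\) by \(\lambda_{\min}(G^{-1/2}HG^{-1/2})\) to get \(E\), and for the converse you use the congruence (equivalently, generalized Rayleigh-quotient) bound. Your remark that \(KE\succeq H\succ0\) already forces \(E\succ0\), so the strict constraint can be dropped from the LMI, is a correct small refinement that the paper leaves implicit.
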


\subsection{Dual infeasibility certificates}

The LMI formulation also gives a checkable certificate when a target condition
number is impossible. Let \(\mathcal L\) be either the diagonal symmetric
subspace or a fixed block-diagonal symmetric subspace, and let
\(\Pi_{\mathcal L}\) be the Frobenius-orthogonal projection onto
\(\mathcal L\). For fixed \(K\), the primal reachability problem asks for
\(E\in\mathcal L\) such that
\[
  H-E\succeq0,\qquad KE-H\succeq0.
\]

\begin{proposition}[Structured SDP dual certificate]
\label{prop:structured-dual-certificate}
If there exist \(P,Q\in\PSD^d\) such that
\[
  \Pi_{\mathcal L}(P-KQ)=0,
  \qquad
  \ip{P-Q}{H}<0,
\]
then no \(E\in\mathcal L\) satisfies \(E\preceq H\preceq KE\). Hence the
diagonal or block restricted target is empty at threshold \(K\).
\end{proposition}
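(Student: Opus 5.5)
The argument is the standard weak-duality (Farkas-type) contradiction for the primal LMI system. Suppose, for contradiction, that some \(E\in\mathcal L\) satisfies \(H-E\succeq0\) and \(KE-H\succeq0\). Pair the first inequality with \(Q\succeq0\) and the second with \(P\succeq0\). Since the Frobenius inner product of two positive semidefinite matrices is nonnegative, this gives
\[
  \ip{Q}{H-E}\ge0,
  \qquad
  \ip{P}{KE-H}\ge0.
\]

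The key observation is that the terms involving \(E\) do not depend on \(E\). Because \(E\in\mathcal L\) and \(\Pi_{\mathcal L}\) is the Frobenius-orthogonal projection onto \(\mathcal L\), we have \(\ip{M}{E}=\ip{\Pi_{\mathcal L}M}{E}\) for every symmetric \(M\).

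Adding the two inequalities requires some care with the scaling. Written naively, the sum is \(\ip{Q-P}{H}+\ip{KP-Q}{E}\ge0\). The hypothesis instead controls \(\Pi_{\mathcal L}(P-KQ)\), so I will rescale which inequality carries the factor \(K\). Using \(K\ip{Q}{H-E}\ge0\) together with \(\ip{P}{E-H/K}\cdot K\) does not match cleanly, so the natural pairing is to test \(H-E\succeq0\) against \(P\) and \(KE-H\succeq0\) against \(Q\):
\[
  \ip{P}{H-E}+\ip{Q}{KE-H}
  =
  \ip{P-Q}{H}-\ip{P-KQ}{E}
  \ge0.
\]
The \(E\)-term is
\[
  \ip{P-KQ}{E}=\ip{\Pi_{\mathcal L}(P-KQ)}{E}=0,
\]
which leaves \(\ip{P-Q}{H}\ge0\). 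This contradicts the hypothesis \(\ip{P-Q}{H}<0\).

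Therefore no \(E\in\mathcal L\) satisfies the Loewner sandwich \(E\preceq H\preceq KE\). By \cref{thm:diag-lmi} in the diagonal case and \cref{thm:block-lmi} in the block case, this emptiness is equivalent to the absence of any family member with \(\kappa(G^{-1}H)\le K\). The restricted target is therefore empty, and \(D_{K,\F}=+\infty\).

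The only delicate step is matching the dual multipliers to the two primal inequalities so that the \(E\)-coefficient is exactly \(P-KQ\). Once that assignment is made, the argument is one line. No strict-feasibility or constraint-qualification assumption is needed, because only weak duality is used.
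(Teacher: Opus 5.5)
Your argument is correct and is the same weak-duality contradiction the paper uses: testing $H-E\succeq0$ against $P$ and $KE-H\succeq0$ against $Q$ gives $0\le\ip{P-Q}{H}-\ip{P-KQ}{E}$, and the $E$-term vanishes because $E\in\mathcal L$ and $\Pi_{\mathcal L}(P-KQ)=0$. In the write-up, delete the abandoned first pairing (with $Q$ on the first inequality) and state the correct multiplier assignment directly.
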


\begin{corollary}[Sign-flip rank-one certificate]
\label{cor:sign-flip-certificate}
Let
\[
  T=\blockdiag(s_1 I_{d_1},\ldots,s_m I_{d_m}),
  \qquad s_j\in\{-1,1\},
\]
and let \(\mathcal L\) be the corresponding block-diagonal subspace. If
\[
  \lambda_{\min}(K THT-H)<0,
\]
then the block family cannot reach condition number \(K\). The diagonal case is
the special case \(d_j=1\).
\end{corollary}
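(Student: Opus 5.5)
We derive the statement from \cref{prop:structured-dual-certificate} with a rank-one dual pair built from a single vector. Since \(\lambda_{\min}(KTHT-H)<0\), we choose a unit vector \(v\) with
\[
  v^\top(KTHT-H)v<0 .
\]
Set \(w=Tv\). Because \(T\) is a signature matrix, we have \(T^\top=T\) and \(T^2=I\). Define
\[
  P=K\,ww^\top,\qquad Q=vv^\top ,
\]
both of which lie in \(\PSD^d\).

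First we check the inner-product condition:
\[
  \ip{P-Q}{H}=K\,w^\top Hw-v^\top Hv=v^\top(KTHT-H)v<0 .
\]

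Next we check the projection condition, which reads \(\Pi_{\mathcal L}(P-KQ)=K\,\Pi_{\mathcal L}(ww^\top-vv^\top)\). The projection \(\Pi_{\mathcal L}\) onto the block-diagonal subspace keeps the diagonal blocks and zeroes the off-diagonal blocks. The \((j,j)\) block of \(ww^\top=Tvv^\top T\) is \(s_j^2\,v_jv_j^\top=v_jv_j^\top\), where \(v_j\) is the \(V_j\)-component of \(v\). This coincides with the \((j,j)\) block of \(vv^\top\). Hence \(\Pi_{\mathcal L}(ww^\top-vv^\top)=0\).

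\Cref{prop:structured-dual-certificate} therefore rules out every \(E\in\mathcal L\) with \(E\preceq H\preceq KE\). By \cref{thm:block-lmi}, no block-diagonal \(G\) attains \(\kappa(G^{-1}H)\le K\), so the block family cannot reach condition number \(K\). Taking all \(d_j=1\) gives the diagonal case, where the corresponding equivalence is \cref{thm:diag-lmi}.

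The argument has no real obstacle. The only point needing care is the cancellation \(s_j^2=1\) in the diagonal blocks, which is exactly what makes the sign flip invisible to \(\Pi_{\mathcal L}\).
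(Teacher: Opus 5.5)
Your proposal is correct and matches the paper's proof: it uses the same rank-one pair \(Q=vv^\top\), \(P=K(Tv)(Tv)^\top\), the same blockwise cancellation \(s_j^2=1\) under \(\Pi_{\mathcal L}\), and the same appeal to \cref{prop:structured-dual-certificate}. Your extra invocation of \cref{thm:block-lmi} (resp.\ \cref{thm:diag-lmi}) only spells out the step from Loewner-sandwich infeasibility to unreachability, which the paper leaves implicit.
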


The sign-flip certificate is not meant to solve every infeasible SDP. It gives
a cheap, inspectable dual witness for synthetic examples where the obstruction
is cross-block or cross-coordinate coupling.

\begin{proposition}[Block Jacobi upper bound]
\label{prop:block-jacobi}
Let
\[
  G_{\mathrm{BJ}}=\blockdiag(H_{11},\ldots,H_{mm})
\]
with every \(H_{jj}\succ0\), and set
\[
  \widetilde H=G_{\mathrm{BJ}}^{-1/2}HG_{\mathrm{BJ}}^{-1/2}.
\]
Then
\[
  K_{\mathrm{block}}^\ast(H)\le \kappa(\widetilde H).
\]
If \(\opnorm{\widetilde H-I}\le\delta<1\), then
\[
  K_{\mathrm{block}}^\ast(H)\le\frac{1+\delta}{1-\delta}.
\]
\end{proposition}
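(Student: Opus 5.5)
The plan is to exhibit \(G_{\mathrm{BJ}}\) itself as a feasible member of the block family and compute its preconditioned condition number. Since every \(H_{jj}\succ0\), the matrix \(G_{\mathrm{BJ}}\) is block-diagonal and positive definite, so \(G_{\mathrm{BJ}}\in\F_{\mathrm{block}}\). By definition \(K_{\mathrm{block}}^\ast(H)=\inf_{G\in\F_{\mathrm{block}}}\kappa(G^{-1}H)\), which gives \(K_{\mathrm{block}}^\ast(H)\le\kappa(G_{\mathrm{BJ}}^{-1}H)\). The generalized eigenproblem \(Hv=\lambda G_{\mathrm{BJ}}v\) is equivalent, via \(w=G_{\mathrm{BJ}}^{1/2}v\), to \(\widetilde Hw=\lambda w\). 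Hence \(\kappa(G_{\mathrm{BJ}}^{-1}H)=\kappa(\widetilde H)\), which is the same similarity identity used in \cref{sec:information-rdgc} for \(\kappa(G^{-1}H)=\kappa(S)\). This proves the first inequality. Alternatively, one can phrase the argument through \cref{thm:block-lmi}, taking \(E=\lambda_{\min}(\widetilde H)G_{\mathrm{BJ}}\). Then \(E\preceq H\preceq \kappa(\widetilde H)E\) follows by congruence with \(G_{\mathrm{BJ}}^{1/2}\) from \(\lambda_{\min}I\preceq\widetilde H\preceq\lambda_{\max}I\).

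For the second claim, the condition \(\opnorm{\widetilde H-I}\le\delta\) means that every eigenvalue of the symmetric matrix \(\widetilde H\) lies in \([1-\delta,1+\delta]\). Because \(\delta<1\), the lower endpoint is positive. Therefore
\[
  \kappa(\widetilde H)=\frac{\lambda_{\max}(\widetilde H)}{\lambda_{\min}(\widetilde H)}\le\frac{1+\delta}{1-\delta},
\]
and combining this with the first part gives the bound.

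No step is a genuine obstacle. The only care needed is to confirm that \(G_{\mathrm{BJ}}\) belongs to the family with the same block partition as \(\F_{\mathrm{block}}\), and that the condition number of the generalized pencil equals the condition number of the whitened matrix \(\widetilde H\). Both facts are immediate. It is also worth noting, though not needed for the bound, that \(\widetilde H\) has identity diagonal blocks.
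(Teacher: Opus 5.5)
Your proposal is correct and matches the paper's proof. In both, \(G_{\mathrm{BJ}}\) is a feasible block metric, so \(K_{\mathrm{block}}^\ast(H)\le\kappa(G_{\mathrm{BJ}}^{-1}H)=\kappa(\widetilde H)\), and \(\opnorm{\widetilde H-I}\le\delta<1\) gives \((1-\delta)I\preceq\widetilde H\preceq(1+\delta)I\); you also spell out the whitening identity \(\kappa(G_{\mathrm{BJ}}^{-1}H)=\kappa(\widetilde H)\), which the paper leaves implicit.
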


The block family can absorb all within-block curvature. The residual
difficulty is the normalized off-block interaction.

\section{Kronecker Geometry and Reachability}
\label{sec:kron-lowrank}

Matrix and tensor parameters motivate Kronecker and low-rank restrictions.
These families require more care than diagonal or block families because their
parameterizations include gauge redundancies and nonconvex structure.
They model the structural assumptions behind Kronecker-factored natural
gradient and tensor preconditioning methods \cite{martens2015kfac,gupta2018shampoo}.
They are also related to classical Kronecker approximation and separable
covariance estimation \cite{vanloan1993kronecker,dutilleul1999mle,
werner2008kronecker}; the objective here is affine-invariant metric geometry
rather than Frobenius approximation or statistical likelihood.
This section proves a local quotient line element, a fixed-basis spectral
equivalence, and a general affine-invariant projection theorem for Kronecker
metrics. The projection theorem does not give an elementary closed form in the
fully noncommuting case, but it gives a unique target, partial-trace normal
equations, certified residuals, and auxiliary self-conditioned \(K\)-target
bounds. The Hessian-relative target has an exact endpoint reachability
formulation as a nonconvex Kronecker Loewner sandwich, an associated
expression threshold, residual-calibrated threshold brackets, and an exact
fixed-basis primal--dual obstruction theorem; candidate Kronecker metrics give
direct upper bounds on \(D_{K,\Kron}\) whenever they pass the generalized
condition-number test against \(H\). The intended output is a certifying
interface: a factor-path line element, spectral mismatch signals, fixed-basis
primal or dual witnesses, and noncommuting Kronecker certificates that prove
reachability, prove some global impossibility cases, or abstain.

\subsection{Kronecker spectral surrogate}

Let \(W\in\R^{m\times n}\), so that \(\operatorname{vec}(W)\in\R^{mn}\). A
Kronecker metric has the form
\[
  G=B\otimes A,\qquad A\in\SPD^m,\quad B\in\SPD^n.
\]
If
\[
  A=U\diag(a_i)U^\top,\qquad B=V\diag(b_j)V^\top,
\]
then \(B\otimes A\) has eigenvectors \(v_j\otimes u_i\) and eigenvalues
\(b_j a_i\). Its log-spectrum has the additive form
\[
  \log a_i+\log b_j.
\]
This motivates the additive subspace
\[
  \mathcal A_{\mathrm{kron}}
  =
  \{Z\in\R^{m\times n}:Z_{ij}=\alpha_i+\beta_j\}.
\]

\begin{remark}[Status of the spectral surrogate]
The additive log-spectrum model below is a diagnostic subproblem. It becomes an
intrinsic Kronecker complexity only in the shared fixed-eigenbasis regime of
\cref{prop:kron-fixed-basis}. Outside that regime, Kronecker structure also
constrains eigenvectors; the quotient geometry in
\cref{prop:kron-line-element} and the projection theorem below are the
geometric objects.
\end{remark}

In a fixed shared Kronecker eigenbasis, let \(Y\in\R^{m\times n}\) denote the
current relative log-spectrum. A Kronecker spectral update can move inside the
affine set \(Y+\mathcal A_{\mathrm{kron}}\). Define
\[
  \Yk=\{Z:\max_{i,j}Z_{ij}-\min_{i,j}Z_{ij}\le\log K\}.
\]

\begin{definition}[Kronecker spectral complexity]
The Kronecker spectral surrogate is
\[
  D_{K,\mathrm{kron}}^{\mathrm{spec}}(Y)
  =
  \dist_F\bigl(
  Y,\ (Y+\mathcal A_{\mathrm{kron}})\cap\Yk
  \bigr),
\]
with value \(+\infty\) if the intersection is empty.
\end{definition}

\begin{proposition}[Spectral lower bound]
\label{prop:kron-spec-lower}
Let \(D_K(Y)=\dist_F(Y,\Yk)\). Then
\[
  D_{K,\mathrm{kron}}^{\mathrm{spec}}(Y)\ge D_K(Y).
\]
If the full projection \(\Pi_{\Yk}(Y)\) lies in
\((Y+\mathcal A_{\mathrm{kron}})\), then equality holds.
\end{proposition}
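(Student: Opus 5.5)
This is a set-inclusion argument: a distance to a smaller set is at least the distance to a larger one. The inequality follows from
\[
  (Y+\mathcal A_{\mathrm{kron}})\cap\Yk\subseteq\Yk,
\]
because an infimum of \(\fro{Y-Z}\) over a subset is at least the infimum over the superset:
\[
  D_{K,\mathrm{kron}}^{\mathrm{spec}}(Y)
  =\inf\{\fro{Y-Z}:Z\in(Y+\mathcal A_{\mathrm{kron}})\cap\Yk\}
  \ge\inf\{\fro{Y-Z}:Z\in\Yk\}
  =D_K(Y).
\]
If the intersection is empty, the left side is \(+\infty\) by definition and the inequality holds trivially.

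Before treating equality, I will check that \(\Yk\) is nonempty, closed and convex. Nonempty: it contains every constant matrix, since \(\log K\ge0\). Closed and convex: the oscillation \(Z\mapsto\max_{i,j}Z_{ij}-\min_{i,j}Z_{ij}\) is a maximum of linear functionals minus a minimum of linear functionals. It is therefore a continuous convex function, and \(\Yk\) is one of its sublevel sets. Consequently the Frobenius projection \(\Pi_{\Yk}(Y)\) exists, is unique, and satisfies \(\fro{Y-\Pi_{\Yk}(Y)}=D_K(Y)\).

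For the equality case, suppose \(\Pi_{\Yk}(Y)\in Y+\mathcal A_{\mathrm{kron}}\). Then \(\Pi_{\Yk}(Y)\) lies in both \(Y+\mathcal A_{\mathrm{kron}}\) and \(\Yk\), so it is a feasible point for the surrogate. This gives
\[
  D_{K,\mathrm{kron}}^{\mathrm{spec}}(Y)\le\fro{Y-\Pi_{\Yk}(Y)}=D_K(Y).
\]
Combined with the first inequality, this yields equality.

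I expect no real obstacle. The only care needed is to justify the existence of the projection, which comes from the closedness and convexity of \(\Yk\) verified above, and to keep the \(+\infty\) convention consistent in the empty-intersection case.
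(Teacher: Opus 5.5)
Your proof is correct and follows the same route as the paper. The inequality comes from the inclusion $(Y+\mathcal A_{\mathrm{kron}})\cap\Yk\subseteq\Yk$, and equality holds because $\Pi_{\Yk}(Y)$ is then feasible for the restricted problem. Your added check that $\Yk$ is nonempty, closed, and convex, so that $\Pi_{\Yk}(Y)$ is well defined and attains $D_K(Y)$, supplies a detail the paper leaves implicit.
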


This surrogate is useful but incomplete: the true Kronecker family also
restricts eigenvectors to Kronecker product form.
Consequently, a small value of \(D_{K,\mathrm{kron}}^{\mathrm{spec}}\) is a
certificate only for the shared-eigenbasis subproblem unless accompanied by an
eigenvector-compatibility argument.
For a general Hessian, this missing compatibility can dominate the spectral
width calculation: the additive log-spectrum may be favorable while no nearby
Kronecker eigenbasis aligns with the curvature directions.

\paragraph{Double-centering residual.}
For \(Y\in\R^{m\times n}\), write
\[
  \bar Y=\frac{1}{mn}\sum_{i,j}Y_{ij},\qquad
  r_i=\frac1n\sum_jY_{ij}-\bar Y,\qquad
  c_j=\frac1m\sum_iY_{ij}-\bar Y,
\]
and define the residual
\[
  E_{ij}=Y_{ij}-\bar Y-r_i-c_j.
\]
Then \(E\) is orthogonal to \(\mathcal A_{\mathrm{kron}}\) in Frobenius inner
product. The quantity
\[
  \Delta_{\mathrm{kron}}(Y)=\fro{E}
\]
is invariant under Kronecker spectral updates \(Y\mapsto Y+Z\) with
\(Z\in\mathcal A_{\mathrm{kron}}\). It is therefore a structural mismatch
signal. Turning it into a condition-number lower bound requires additional
width or projection assumptions; the residual alone records nonadditivity.

\subsection{Intrinsic Kronecker quotient geometry}

The factorization has a gauge:
\[
  B\otimes A=(c^{-1}B)\otimes(cA),\qquad c>0.
\]
Thus the intrinsic factor space is a quotient of
\(\SPD^n\times\SPD^m\). For an absolutely continuous factor path define
\[
  X_t=A_t^{-1/2}\dot A_tA_t^{-1/2},\qquad
  Y_t=B_t^{-1/2}\dot B_tB_t^{-1/2}.
\]

\begin{proposition}[Restricted affine-invariant line element]
\label{prop:kron-line-element}
For \(G_t=B_t\otimes A_t\),
\[
  \norm{\dot G_t}_{G_t,\AI}^2
  =
  n\fro{X_t}^2
  +
  m\fro{Y_t}^2
  +
  2\tr(X_t)\tr(Y_t).
\]
The direction \(X_t=\alpha I_m,\ Y_t=-\alpha I_n\) has zero length and is
exactly the factor-scale gauge direction.
\end{proposition}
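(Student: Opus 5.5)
The plan is a direct computation in whitened coordinates, followed by a check that the zero-length directions are exactly the gauge. Differentiating \(G_t=B_t\otimes A_t\) with the product rule gives
\[
  \dot G_t=\dot B_t\otimes A_t+B_t\otimes\dot A_t .
\]
Since \(G_t^{-1/2}=B_t^{-1/2}\otimes A_t^{-1/2}\) and the Kronecker product is multiplicative, whitening yields
\[
  G_t^{-1/2}\dot G_tG_t^{-1/2}
  =(B_t^{-1/2}\dot B_tB_t^{-1/2})\otimes I_m+I_n\otimes(A_t^{-1/2}\dot A_tA_t^{-1/2})
  =Y_t\otimes I_m+I_n\otimes X_t .
\]
By the definition of the affine-invariant line element in \cref{sec:information-rdgc},
\[
  \norm{\dot G_t}_{G_t,\AI}^2=\fro{Y_t\otimes I_m+I_n\otimes X_t}^2 .
\]

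Next, expand this squared norm with the trace inner product and the identities \(\tr(P\otimes Q)=\tr(P)\tr(Q)\) and \((P\otimes Q)(R\otimes S)=PR\otimes QS\). There are three terms:
\begin{itemize}
  \item \(\fro{Y_t\otimes I_m}^2=\tr(Y_t^2)\,\tr(I_m)=m\fro{Y_t}^2\);
  \item \(\fro{I_n\otimes X_t}^2=n\fro{X_t}^2\);
  \item the cross term \(2\tr\bigl((Y_t\otimes I_m)(I_n\otimes X_t)\bigr)=2\tr(Y_t\otimes X_t)=2\tr(X_t)\tr(Y_t)\).
\end{itemize}
Summing them gives the stated formula. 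The matrices are symmetric, so the transposes in the Frobenius inner product are harmless.

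For the gauge statement, first check that the direction \(X_t=\alpha I_m\), \(Y_t=-\alpha I_n\) has zero length. It gives
\[
  Y_t\otimes I_m+I_n\otimes X_t=-\alpha I_{mn}+\alpha I_{mn}=0 .
\]
The formula confirms this:
\[
  n m\alpha^2+m n\alpha^2-2m n\alpha^2=0 .
\]
Moreover, the curve \(A_t=e^{\alpha t}A\), \(B_t=e^{-\alpha t}B\) has exactly these whitened velocities and leaves \(B_t\otimes A_t\) fixed. It is therefore the orbit of the rescaling \((c^{-1}B,cA)\) with \(c=e^{\alpha t}\).

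The step needing the most care is the converse: every zero-length direction is a gauge direction. I would argue through the eigenvalues of the whitened velocity. Diagonalize \(X_t=\sum_i x_iu_iu_i^\top\) and \(Y_t=\sum_jy_jv_jv_j^\top\). Then \(Y_t\otimes I_m+I_n\otimes X_t\) has eigenvectors \(v_j\otimes u_i\) with eigenvalues \(x_i+y_j\). Its squared Frobenius norm is \(\sum_{i,j}(x_i+y_j)^2\), which vanishes only if \(x_i+y_j=0\) for all \(i,j\). Holding \(j\) fixed, all \(x_i\) must equal some common value \(\alpha\); then every \(y_j=-\alpha\). Hence \(X_t=\alpha I_m\) and \(Y_t=-\alpha I_n\). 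This also shows the quadratic form is positive semidefinite with kernel exactly the gauge line, which justifies calling it a line element on the quotient.
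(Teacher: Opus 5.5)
Your proposal is correct and follows the paper's proof: whiten \(\dot G_t\) to \(Y_t\otimes I_m+I_n\otimes X_t\) and expand its squared Frobenius norm into the three trace terms. Your eigenvalue argument showing that the kernel of the quadratic form is exactly the gauge line goes slightly beyond the paper, which only checks that the three terms cancel for \(X_t=\alpha I_m\), \(Y_t=-\alpha I_n\).
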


\begin{proposition}[Distance between Kronecker metrics]
\label{prop:kron-product-distance}
For \(G_0=B_0\otimes A_0\) and \(G_1=B_1\otimes A_1\), set
\[
  L_A=\log(A_0^{-1/2}A_1A_0^{-1/2}),\qquad
  L_B=\log(B_0^{-1/2}B_1B_0^{-1/2}).
\]
Then the ambient affine-invariant distance is
\[
  d_{\AI}(G_0,G_1)^2
  =
  n\fro{L_A}^2+m\fro{L_B}^2
  +2\tr(L_A)\tr(L_B).
\]
\end{proposition}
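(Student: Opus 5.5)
We reduce the claim to a computation of the logarithm of a single whitened Kronecker matrix. By congruence invariance of \(d_{\AI}\),
\[
  d_{\AI}(G_0,G_1)=\fro{\log\bigl(G_0^{-1/2}G_1G_0^{-1/2}\bigr)}.
\]
The first step is to whiten factorwise. Because square roots and inverses act factorwise on Kronecker products, \(G_0^{-1/2}=B_0^{-1/2}\otimes A_0^{-1/2}\). The mixed-product rule then gives
\[
  G_0^{-1/2}G_1G_0^{-1/2}=M_B\otimes M_A,
\]
where \(M_A=A_0^{-1/2}A_1A_0^{-1/2}\in\SPD^m\) and \(M_B=B_0^{-1/2}B_1B_0^{-1/2}\in\SPD^n\). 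By definition, \(L_A=\log M_A\) and \(L_B=\log M_B\).

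The second step is to identify the logarithm of this product. Write \(M_A=U\diag(e^{\alpha_i})U^\top\) and \(M_B=V\diag(e^{\beta_j})V^\top\). Then \(M_B\otimes M_A\) has orthonormal eigenvectors \(v_j\otimes u_i\) with eigenvalues \(e^{\beta_j+\alpha_i}\). Its principal logarithm is therefore
\[
  \log(M_B\otimes M_A)=I_n\otimes L_A+L_B\otimes I_m,
\]
and this can be verified directly on the eigenbasis. An alternative justification is that \(I_n\otimes L_A\) and \(L_B\otimes I_m\) commute, so the exponential of their sum factorizes as \(e^{L_B}\otimes e^{L_A}\); uniqueness of the real symmetric logarithm of an SPD matrix then identifies the sum as the logarithm.

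The third step is to expand the squared Frobenius norm:
\[
  \fro{I_n\otimes L_A}^2+\fro{L_B\otimes I_m}^2+2\ip{I_n\otimes L_A}{L_B\otimes I_m}.
\]
Using \(\fro{X\otimes Y}^2=\fro{X}^2\fro{Y}^2\), the first two terms equal \(n\fro{L_A}^2\) and \(m\fro{L_B}^2\). The cross term is
\[
  \tr\bigl((I_n\otimes L_A)(L_B\otimes I_m)\bigr)=\tr(L_B\otimes L_A)=\tr(L_A)\tr(L_B).
\]
Together these give the stated formula.

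No step is a genuine obstacle. The only point needing care is the logarithm identity in the second step, specifically the claim that the logarithm of a Kronecker product is the Kronecker sum of the logarithms. The eigenbasis argument settles it cleanly. As a consistency check, the result is the integrated form of \cref{prop:kron-line-element} along the factorwise geodesic, whose whitened velocities are \(X_t\equiv L_A\) and \(Y_t\equiv L_B\). That factorwise path is itself the ambient geodesic \(G_0^{1/2}(M_B\otimes M_A)^tG_0^{1/2}\), which confirms the constant-coefficient formula.
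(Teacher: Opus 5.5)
Your proposal is correct and matches the paper's proof step for step: factorwise whitening by the mixed-product rule, identification of the logarithm as the Kronecker sum $L_B\otimes I_m+I_n\otimes L_A$, and expansion of its squared Frobenius norm, including the cross term $\tr(L_A)\tr(L_B)$. One minor aside: in your closing consistency check, the whitened velocity $A_t^{-1/2}\dot A_tA_t^{-1/2}$ along the factor geodesic is only orthogonally conjugate to $L_A$, not identically equal to it, but this is harmless because the line element depends only on Frobenius norms and traces.
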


The line element identifies the degenerate gauge direction and the length
assigned to a specified factor path. The product-distance formula gives exact
distances between two Kronecker metrics. Projecting an arbitrary target Hessian
onto the Kronecker family is a separate problem handled after the fixed-basis
surrogate below.

\begin{proposition}[Fixed-basis exactness of the spectral surrogate]
\label{prop:kron-fixed-basis}
Assume \(H\), \(G_0\), and all feasible \(G_t=B_t\otimes A_t\) share a fixed
Kronecker eigenbasis. Let \(\alpha_i(t)=\log\lambda_i(A_t)\) and
\(\beta_j(t)=\log\lambda_j(B_t)\), and set \(Z_{ij}(t)=\alpha_i(t)+\beta_j(t)\).
Then
\[
  \norm{\dot G_t}_{G_t,\AI}^2=\fro{\dot Z_t}^2.
\]
Consequently, in the fixed-basis log-spectrum subproblem,
\(D_{K,\mathrm{kron}}^{\mathrm{spec}}\) is the intrinsic Kronecker complexity.
\end{proposition}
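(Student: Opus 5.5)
The plan is to compute the affine-invariant speed of \(G_t=B_t\otimes A_t\) directly in the shared eigenbasis, and then read off the surrogate distance from \cref{prop:submanifold-distance}. Fix orthogonal \(U\in\R^{m\times m}\) and \(V\in\R^{n\times n}\) so that every feasible factor path has the form \(A_t=U\diag(e^{\alpha_i(t)})U^\top\) and \(B_t=V\diag(e^{\beta_j(t)})V^\top\). With this basis,
\[
  G_t=(V\otimes U)\diag\bigl(e^{Z_{ij}(t)}\bigr)(V\otimes U)^\top .
\]
The affine-invariant line element is congruence invariant, so conjugating by the orthogonal matrix \(V\otimes U\) reduces the computation to a diagonal path \(\diag(e^{Z(t)})\) in \(\SPD^{mn}\). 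For diagonal \(D_t=\diag(e^{z_t})\) we have \(D_t^{-1/2}\dot D_tD_t^{-1/2}=\diag(\dot z_t)\). Hence \(\norm{\dot G_t}_{G_t,\AI}^2=\sum_{i,j}\dot Z_{ij}^2=\fro{\dot Z_t}^2\).

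As a consistency check, the same identity follows from \cref{prop:kron-line-element}. Here \(X_t=U\diag(\dot\alpha)U^\top\) and \(Y_t=V\diag(\dot\beta)V^\top\), so
\[
  n\fro{X}^2+m\fro{Y}^2+2\tr X\tr Y=\sum_{i,j}(\dot\alpha_i+\dot\beta_j)^2 .
\]
One verifies this by expanding the square: the cross terms give \(2\sum_i\dot\alpha_i\sum_j\dot\beta_j\). The gauge direction corresponds to \(\dot Z=0\), which is consistent with the degeneracy noted there.

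For the consequence, pass to Hessian-relative coordinates. Because \(H\) shares the basis, write \(H=(V\otimes U)\diag(e^{h_{ij}})(V\otimes U)^\top\). Then \(S_t=H^{-1/2}G_tH^{-1/2}\) is diagonal in the same basis with log-spectrum \(Z(t)-h\). Its condition number is \(\exp(\max(Z-h)-\min(Z-h))\), so the target condition \(S(T)\in\Ck\) reads \(Z(T)-h\in\Yk\).

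Let \(Y=Z(0)-h\) be the initial relative log-spectrum. Admissible paths are then exactly curves \(Y+W_t\) with \(W_t\in\mathcal A_{\mathrm{kron}}\) and \(W_0=0\), since every \(\alpha_i+\beta_j\) pattern is realized by choosing the factor eigenvalues. By the first part, their AI length is the Euclidean (Frobenius) length of \(W_t\) in the flat affine set \(Y+\mathcal A_{\mathrm{kron}}\).

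Therefore the infimum of lengths over paths ending in \(\Yk\) is at least the Frobenius distance from \(Y\) to \((Y+\mathcal A_{\mathrm{kron}})\cap\Yk\). The straight segment within the affine set attains this infimum, because the intersection is closed and the affine set is flat and isometrically embedded. This is precisely \(D_{K,\mathrm{kron}}^{\mathrm{spec}}(Y)\), and the value is \(+\infty\) if the intersection is empty.

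The main obstacle is making precise that the fixed-basis subproblem restricts admissible paths to those described above. The line-element identity itself is routine once congruence invariance is invoked. What needs care is that the realized set in fixed-basis coordinates is exactly the flat \(Y+\mathcal A_{\mathrm{kron}}\) with its Euclidean metric, and that the gauge quotient neither creates nor loses length. I would handle this by noting that the map \(Z\mapsto\diag(e^{Z})\) is an isometry from \(\mathcal A_{\mathrm{kron}}\) with the Frobenius metric, as established above, so the quotient plays no further role.
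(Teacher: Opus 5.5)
Your proposal is correct, and its main derivation of the line-element identity takes a different route from the paper. The paper's proof specializes \cref{prop:kron-line-element}: it notes that the eigenvalues of \(X_t,Y_t\) are \(\dot\alpha_i,\dot\beta_j\), and expands \(\sum_{i,j}(\dot\alpha_i+\dot\beta_j)^2\) to match \(n\fro{X_t}^2+m\fro{Y_t}^2+2\tr(X_t)\tr(Y_t)\). That is exactly your ``consistency check.'' Your primary argument instead conjugates by the fixed orthogonal matrix \(V\otimes U\) and reduces to a commuting diagonal path, the same mechanism as \cref{lem:diag-distance-identity}. Your route is self-contained. It also shows directly that the fixed-basis Kronecker family is isometric to the flat subspace \(\mathcal A_{\mathrm{kron}}\) with the Frobenius metric, which is what the second sentence of the statement actually needs. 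The paper's route instead presents the fixed-basis case as a specialization of the general quotient line element, so the gauge degeneracy is inherited rather than rederived. You also go further than the paper on the ``consequently'' clause. The paper's proof stops after the line-element identity and leaves the identification with \(D^{\mathrm{spec}}_{K,\mathrm{kron}}\) implicit. You supply the missing steps: the relative log-spectrum is \(Z(t)-h\), the target reads \(Z(T)-h\in\Yk\), endpoints stay in \(Y+\mathcal A_{\mathrm{kron}}\), length is at least the endpoint displacement, and a straight segment attains the distance. The only detail worth making explicit is the lifting. An absolutely continuous \(W_t\in\mathcal A_{\mathrm{kron}}\) lifts to absolutely continuous factor log-eigenvalues through a linear right inverse of \((a,b)\mapsto(a_i+b_j)_{i,j}\), for instance after fixing the gauge \(\sum_i a_i=0\), which meets the kernel \(\{(c\mathbf 1,-c\mathbf 1)\}\) only at zero. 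This is why the gauge quotient costs nothing.
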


\subsection{Affine-invariant Kronecker projection}

The fixed-basis surrogate can be strengthened without assuming shared
eigenvectors. Define
\[
  \Kron_{m,n}
  =
  \{B\otimes A:A\in\SPD^m,\ B\in\SPD^n\}
  \subset\SPD^{mn},
\]
and the log-Kronecker subspace
\[
  \mathcal L_{m,n}
  =
  \{I_n\otimes X+Y\otimes I_m:
    X\in\mathbb S^m,\ Y\in\mathbb S^n\}.
\]
For \(L\in\mathbb S^{mn}\), write \(L=[L_{pq}]_{p,q=1}^n\) in
\(m\times m\) blocks and define partial traces
\[
  \operatorname{Tr}_B(L)=\sum_{p=1}^n L_{pp},
  \qquad
  \operatorname{Tr}_A(L)=[\tr(L_{pq})]_{p,q=1}^n.
\]

\begin{theorem}[Log-Kronecker mismatch and closed-form projection]
\label{thm:kron-log-projection}
For \(M\in\SPD^{mn}\),
\[
  M\in\Kron_{m,n}
  \quad\Longleftrightarrow\quad
  \log M\in\mathcal L_{m,n}.
\]
Moreover, the Frobenius projection of \(L\in\mathbb S^{mn}\) onto
\(\mathcal L_{m,n}\) is
\[
  \Pi_{\mathcal L}L
  =
  \tau I_{mn}+I_n\otimes X_0+Y_0\otimes I_m,
\]
where
\[
  \tau=\frac{\tr L}{mn},\qquad
  X_0=\frac1n\operatorname{Tr}_B(L)-\tau I_m,\qquad
  Y_0=\frac1m\operatorname{Tr}_A(L)-\tau I_n.
\]
Thus
\[
  \delta_{\mathrm{kron}}^{\log}(M)
  =
  \fro{\log M-\Pi_{\mathcal L}\log M}
\]
is a closed-form, eigenvector-aware Kronecker mismatch certificate. It
vanishes exactly on \(\Kron_{m,n}\).
\end{theorem}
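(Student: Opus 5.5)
The proof splits into three parts: the membership equivalence, the projection formula, and the vanishing of the mismatch.

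\emph{Membership.} For the forward direction, take \(M=B\otimes A\). Since \(I_n\otimes A\) and \(B\otimes I_m\) commute and their product is \(B\otimes A\), and since \(\log(I_n\otimes A)=I_n\otimes\log A\) and \(\log(B\otimes I_m)=\log B\otimes I_m\) by the functional calculus, we get
\[
  \log M=I_n\otimes\log A+\log B\otimes I_m\in\mathcal L_{m,n}.
\]
The reverse direction runs the same argument backwards. Suppose \(\log M=I_n\otimes X+Y\otimes I_m\) with \(X\in\mathbb S^m\) and \(Y\in\mathbb S^n\). The two summands commute, so
\[
  M=\exp(I_n\otimes X)\exp(Y\otimes I_m)=(I_n\otimes e^X)(e^Y\otimes I_m)=e^Y\otimes e^X.
\]
This lies in \(\Kron_{m,n}\) with \(A=e^X\) and \(B=e^Y\). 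The matrix logarithm is a bijection \(\SPD^{mn}\to\mathbb S^{mn}\), so no branch issues arise.

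\emph{Projection formula.} Decompose \(\mathbb S^m=\R I_m\oplus\mathbb S^m_0\) into scalar and trace-free parts, and likewise for \(\mathbb S^n\). Then
\[
  \mathcal L_{m,n}=\R I_{mn}\oplus(I_n\otimes\mathbb S^m_0)\oplus(\mathbb S^n_0\otimes I_m).
\]
These three summands are mutually Frobenius-orthogonal because \(\ip{I_n\otimes X}{Y\otimes I_m}=\tr X\,\tr Y\) and \(\ip{I_n\otimes X}{I_{mn}}=n\tr X\), both of which vanish on trace-free parts. The projection of \(L\) is therefore the sum of its projections onto each summand.
\begin{itemize}
  \item Onto \(\R I_{mn}\) the projection is \(\tau I_{mn}\) with \(\tau=\tr L/(mn)\).
  \item For the second summand, use \(\ip{L}{I_n\otimes X}=\ip{\operatorname{Tr}_B L}{X}\) and \(\fro{I_n\otimes X}^2=n\fro{X}^2\). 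The projection is \(I_n\otimes X_0\), where \(X_0\) is the trace-free part of \(\frac1n\operatorname{Tr}_B L\), namely \(\frac1n\operatorname{Tr}_B L-\tau I_m\), since \(\tr\operatorname{Tr}_B L=\tr L\).
  \item Symmetrically, \(\ip{L}{Y\otimes I_m}=\ip{\operatorname{Tr}_A L}{Y}\) gives \(Y_0=\frac1m\operatorname{Tr}_A L-\tau I_n\).
\end{itemize}
To guard against a slip, I would check the normal equations directly: \(L-\Pi_{\mathcal L}L\) should have vanishing partial traces \(\operatorname{Tr}_B\) and \(\operatorname{Tr}_A\), which characterizes orthogonality to \(\mathcal L_{m,n}\).

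\emph{Mismatch.} The quantity \(\delta^{\log}_{\mathrm{kron}}(M)\) is the Frobenius distance from \(\log M\) to the subspace \(\mathcal L_{m,n}\). It is zero exactly when \(\log M\in\mathcal L_{m,n}\), which by the first part means exactly when \(M\in\Kron_{m,n}\). It is computable in closed form because it needs only one matrix logarithm and two partial traces.

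The only real care point is the partial-trace adjoint identities, including the index convention that places \(A\) as the inner \(m\times m\) block factor. These are routine entrywise computations: \(I_n\otimes X\) is block diagonal with copies of \(X\), and \(Y\otimes I_m\) has blocks \(Y_{pq}I_m\).
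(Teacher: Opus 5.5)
Your proposal is correct and follows essentially the same route as the paper: the commuting-factor argument for \(\log(B\otimes A)\) and its converse, then the Frobenius-orthogonal splitting \(\mathcal L_{m,n}=\R I_{mn}\oplus(I_n\otimes\mathbb S^m_0)\oplus(\mathbb S^n_0\otimes I_m)\). The only cosmetic difference is that you compute each summand's projection through the partial-trace adjoint identities, whereas the paper checks that the candidate has the same trace and partial traces as \(L\); that check is exactly the normal-equation test you propose as a safeguard.
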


The double-centering residual above is the special case of
\cref{thm:kron-log-projection} in a fixed Kronecker eigenbasis. Outside that
regime, \(\delta_{\mathrm{kron}}^{\log}\) also detects eigenvector
incompatibility.

\begin{lemma}[Hadamard projection and line-search facts]
\label{lem:hadamard-projection-facts}
Let \(M\) be a finite-dimensional Hadamard manifold and let \(C\subset M\) be
closed and geodesically convex. Every \(x\in M\) has a unique metric
projection \(P_C(x)\). For fixed \(x\), the function
\[
  f(y)=\frac12d(y,x)^2
\]
is \(1\)-strongly geodesically convex on \(C\). If the restricted gradient of
\(f\) is Lipschitz on a compact geodesic neighborhood of an initial sublevel
set, Armijo backtracking along the negative restricted gradient terminates at
every step and accepts step sizes bounded below on that sublevel set.
\end{lemma}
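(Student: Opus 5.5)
Existence and uniqueness of the projection come first, by the standard Hadamard argument. Fix \(x\in M\). The map \(y\mapsto d(y,x)\) is continuous, and \(C\) is closed in a complete finite-dimensional manifold, so closed bounded subsets of \(C\) are compact by Hopf--Rinow. Hence the infimum of \(d(\cdot,x)\) over \(C\cap\overline B_{r}(x)\) is attained for any \(r\) large enough that this set is nonempty, which gives existence.

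For uniqueness and for the convexity claim I would use the CAT(0) comparison inequality. For a geodesic \(\gamma:[0,1]\to M\),
\[
  d(\gamma(t),x)^2\le(1-t)d(\gamma(0),x)^2+t\,d(\gamma(1),x)^2-t(1-t)d(\gamma(0),\gamma(1))^2 .
\]
This is exactly \(1\)-strong geodesic convexity of \(f=\tfrac12 d(\cdot,x)^2\) along constant-speed geodesics (modulus \(\tfrac12\cdot 2=1\)). Since \(C\) is geodesically convex, the geodesic joining two points of \(C\) stays in \(C\), so the restriction to \(C\) inherits this property. Now suppose \(y_0\ne y_1\) are two minimizers. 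Evaluating the inequality at the midpoint \(t=\tfrac12\) gives a point of \(C\) with strictly smaller value, a contradiction. So the projection \(P_C(x)\) is unique.

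The line-search claim is the delicate step, because it depends on how "restricted gradient" is read. I take it as the Riemannian gradient of \(f\) on \(C\), where \(C\) is a smooth geodesically convex submanifold as in the Kronecker application; there it is the tangential projection of \(-\exp_y^{-1}x\). Let \(S_0=\{y\in C: f(y)\le f(y_0)\}\). By strong convexity and properness, \(S_0\) is bounded and closed, hence compact. Let \(N\) be the compact geodesic neighborhood of \(S_0\) on which the gradient is \(L\)-Lipschitz, with \(N\supseteq\{\exp_y(-s\nabla f(y)):y\in S_0,\ 0\le s\le \bar s\}\) for some \(\bar s>0\).

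On \(N\) the standard descent lemma along geodesics applies:
\[
  f(\exp_y(-s g))\le f(y)-s\|g\|^2+\tfrac{L}{2}s^2\|g\|^2,\qquad g=\nabla f(y).
\]
It follows from integrating the derivative along the geodesic and using Lipschitz continuity via parallel transport. Hence the Armijo condition with parameter \(c\in(0,1)\) holds for all \(s\le\min\{\bar s,\,2(1-c)/L\}\). Backtracking by factor \(\beta\) therefore terminates, and the accepted step satisfies \(s\ge\beta\min\{\bar s,2(1-c)/L\}\) (or equals the initial trial step, if that is smaller).

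Acceptance of the step decreases \(f\), so every iterate remains in \(S_0\). An induction on the step index then shows that the same lower bound holds at every step. The main obstacle is ensuring that the trial geodesic segments stay inside \(N\); choosing \(\bar s\) through compactness of \(S_0\) and boundedness of \(\|\nabla f\|\) on \(S_0\) handles this.
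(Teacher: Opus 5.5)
Your proposal is correct and follows essentially the same route as the paper. Existence and uniqueness of the projection come from Hopf--Rinow compactness together with strict convexity of the squared distance, $1$-strong geodesic convexity comes from the CAT(0) comparison inequality, and Armijo termination with a uniform lower step bound comes from the geodesic descent lemma on a compact Lipschitz neighborhood of the sublevel set. Your version makes explicit several points the paper leaves terse: the step threshold $\min\{\bar s,2(1-c)/L\}$, the resulting backtracking bound, and why the trial geodesics stay where the Lipschitz constant applies.
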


\begin{theorem}[Affine-invariant Kronecker projection]
\label{thm:kron-ai-projection}
The family \(\Kron_{m,n}\) is a closed totally geodesic submanifold of the SPD
cone under \(d_{\AI}\). Hence every \(M\in\SPD^{mn}\) has a unique projection
\[
  G_\star=P_{\Kron}(M)
  =
  \argminop_{G\in\Kron_{m,n}}d_{\AI}(M,G).
\]
If
\[
  R_\star=\log(G_\star^{-1/2}MG_\star^{-1/2}),
\]
then
\[
  \operatorname{Tr}_B(R_\star)=0,\qquad
  \operatorname{Tr}_A(R_\star)=0.
\]
Conversely, any \(G\in\Kron_{m,n}\) satisfying these two equations equals
\(G_\star\). Therefore
\[
  d_{\AI}(M,\Kron_{m,n})=\fro{R_\star}.
\]
\end{theorem}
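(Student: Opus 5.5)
The plan is to show that $\Kron_{m,n}$ is closed and totally geodesic, invoke \cref{lem:hadamard-projection-facts} for existence and uniqueness of the projection, and then read off the normal equations as the first-order condition of $\tfrac12 d_{\AI}(\cdot,M)^2$ restricted to the family.

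\emph{Totally geodesic.} Take $G_0=B_0\otimes A_0$ and $G_1=B_1\otimes A_1$. Because $G_0^{\pm1/2}=B_0^{\pm1/2}\otimes A_0^{\pm1/2}$, we get
$$G_0^{-1/2}G_1G_0^{-1/2}=(B_0^{-1/2}B_1B_0^{-1/2})\otimes(A_0^{-1/2}A_1A_0^{-1/2}).$$
Real powers of a Kronecker product of SPD matrices factor: diagonalize both factors and use the product eigenbasis. Hence the AIRM geodesic
$$G_0^{1/2}\bigl(G_0^{-1/2}G_1G_0^{-1/2}\bigr)^tG_0^{1/2}=(B_0\#_tB_1)\otimes(A_0\#_tA_1)$$
stays in $\Kron_{m,n}$. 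So the family is geodesically convex, and since geodesics in a Hadamard manifold are unique, it is totally geodesic. The same computation shows it is an embedded submanifold, the image of the smooth immersion $(A,B)\mapsto B\otimes A$ modulo the scale gauge of \cref{prop:kron-line-element}. Alternatively, \cref{thm:kron-log-projection} identifies it as $\exp$ of a linear subspace after congruence.

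\emph{Closedness in $\SPD^{mn}$.} Suppose $B_k\otimes A_k\to M\succ0$. Use the gauge to normalize $\tr A_k=1$. The eigenvalues of $B_k\otimes A_k$ are the products $b_ja_i$, and they stay in a compact subinterval of $(0,\infty)$. Since $\lambda_{\max}(A_k)\le 1$ and $\lambda_{\max}(A_k)\ge1/m$, this forces the eigenvalues of $B_k$ into a compact subinterval of $(0,\infty)$, and then the same for $A_k$. Extracting convergent subsequences gives $M=B\otimes A$ with $A,B\succ0$. \Cref{lem:hadamard-projection-facts} then yields the unique projection $G_\star$.

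\emph{Normal equations.} At $G=B\otimes A$, a tangent vector $\dot B\otimes A+B\otimes\dot A$ whitens to
$$G^{-1/2}(\dot B\otimes A+B\otimes\dot A)G^{-1/2}=Y\otimes I_m+I_n\otimes X,$$
with $X=A^{-1/2}\dot AA^{-1/2}$ and $Y=B^{-1/2}\dot BB^{-1/2}$. So the whitened tangent space is exactly $\mathcal L_{m,n}$. The Riemannian gradient of $f=\tfrac12 d_{\AI}(\cdot,M)^2$ is $-\mathrm{Log}_G M$, which in whitened coordinates equals $-R=-\log(G^{-1/2}MG^{-1/2})$. This is the standard first-variation formula on Hadamard manifolds; it is consistent with the $\log S\in\widehat N_P\F$ clause of \cref{thm:normal-response-law}.

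At the minimizer, $R_\star\perp\mathcal L_{m,n}$ in the Frobenius inner product. The identities
$$\tr\bigl(R(I_n\otimes X)\bigr)=\tr\bigl(\operatorname{Tr}_B(R)X\bigr),\qquad \tr\bigl(R(Y\otimes I_m)\bigr)=\tr\bigl(\operatorname{Tr}_A(R)Y\bigr),$$
hold for all symmetric $X$ and $Y$. Since both partial traces are symmetric, orthogonality gives $\operatorname{Tr}_B(R_\star)=0$ and $\operatorname{Tr}_A(R_\star)=0$.

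\emph{Converse and distance.} Suppose $G\in\Kron_{m,n}$ satisfies both equations. Then the restricted gradient vanishes. Because $f$ is $1$-strongly geodesically convex on the convex set $\Kron_{m,n}$ (\cref{lem:hadamard-projection-facts}), a critical point is the unique global minimizer, so $G=G_\star$. Finally, $d_{\AI}(M,G_\star)=\fro{\log(G_\star^{-1/2}MG_\star^{-1/2})}=\fro{R_\star}$ by the definition of the distance.

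\emph{Main obstacle.} The delicate step is closedness, that is, controlling the factors under the gauge so that neither degenerates in the limit. A secondary point is justifying the gradient formula $-\mathrm{Log}_G M$ along the embedded submanifold; I would cite it as standard Hadamard-manifold calculus.
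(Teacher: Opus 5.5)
Your proposal is correct and follows essentially the same route as the paper: the Kronecker factorization of the AIRM geodesic gives total geodesicity, and a gauge normalization plus compactness gives closedness (you normalize $\tr A_k=1$ where the paper uses $\det A_k=1$). The rest also matches: the Hadamard projection lemma gives uniqueness, identifying the whitened tangent space with $\mathcal L_{m,n}$ turns orthogonality of $R_\star$ into the two partial-trace equations, and strong geodesic convexity gives the converse, with your explicit identities $\tr(R(I_n\otimes X))=\tr(\operatorname{Tr}_B(R)X)$ and $\tr(R(Y\otimes I_m))=\tr(\operatorname{Tr}_A(R)Y)$ spelling out a step the paper states without computation.
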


Theorem \ref{thm:kron-ai-projection} is the intrinsic replacement for the
fixed-basis surrogate. It gives an implicit but global distance computation.
The log-Euclidean certificate remains useful because the
exponential-metric-increasing inequality gives
\[
  d_{\AI}(M,\Kron_{m,n})\ge \delta_{\mathrm{kron}}^{\log}(M).
\]

\subsection{When nearest geometry is condition-optimal}

The unique AIRM projection need not minimize the Hessian-relative condition
number.  The following theorem gives the exact compatibility condition and
the regimes currently known to force or violate it.

\begin{theorem}[Nearest Kronecker geometry versus best preconditioner]
\label{thm:kron-nearest-versus-best}
Let
\[
  P=P_{\Kron}(H),
  \qquad S=P^{-1/2}HP^{-1/2},
\]
and let \(E_+\) and \(E_-\) be the largest- and smallest-eigenvalue
eigenspaces of \(S\). Then \(P\) belongs to
\[
  \argminop_{G\in\Kron_{m,n}}\log\kappa(G^{-1}H)
\]
if and only if there exist density matrices \(Q_+,Q_-\succeq0\) such that
\[
  \tr Q_+=\tr Q_-=1,
  \qquad
  \operatorname{range}(Q_+)\subseteq E_+,
  \qquad
  \operatorname{range}(Q_-)\subseteq E_-,
\]
and
\[
  \operatorname{Tr}_B(Q_--Q_+)=0,
  \qquad
  \operatorname{Tr}_A(Q_--Q_+)=0.
\]
For simple extreme eigenvalues, writing \(u_+\) and \(u_-\) for unit extreme
eigenvectors, this says exactly that the two pure states
\(u_+u_+^\top\) and \(u_-u_-^\top\) have identical marginal density matrices
on both tensor factors.

If \(S\) has exactly two distinct eigenvalues, the condition above always
holds: the nearest affine-invariant Kronecker geometry is automatically a
globally best Kronecker preconditioner.  This universal guarantee does not
extend to unrestricted richer spectra.  In particular, let
\[
  A=\diag(1,-1),
  \qquad
  B=\diag(2,-\tfrac12,-\tfrac32),
  \qquad
  H=\exp(B\otimes A)\in\SPD^6.
\]
Then
\[
  P_{\Kron_{2,3}}(H)=I_6,
  \qquad
  \log\kappa(H)=4,
\]
whereas the Kronecker metric
\[
  G=I_3\otimes\exp(\tfrac14A)
\]
satisfies
\[
  \log\kappa(G^{-1}H)=\frac72.
\]
Thus the unique nearest structured geometry need not be a best structured
preconditioner.
\end{theorem}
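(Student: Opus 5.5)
The plan is to view the condition objective as a geodesically convex function on a totally geodesic submanifold, so that first-order optimality at \(P\) becomes both necessary and sufficient. By \cref{thm:kron-ai-projection}, \(\Kron_{m,n}\) is closed and totally geodesic, hence geodesically convex. By \cref{thm:structured-log-spectral-mother} with \(q=1\), \(\rho=\mathrm{id}\), \(H_1=H\) and \(\phi=\phi_{\mathrm{osc}}\), the function \(\mathcal J_H(G)=\log\kappa(G^{-1}H)\) is geodesically convex on \(\Kron_{m,n}\). The second half of \cref{thm:normal-response-law} then says that \(P\) is a global minimizer if and only if \(\mathfrak Z_{\mathrm{osc}}(P)\cap\widehat N_P\Kron\neq\emptyset\). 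I will make both sides of this condition explicit.

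\emph{Normal space.} Since \(P=B_0\otimes A_0\), a tangent vector is \(\dot B\otimes A_0+B_0\otimes\dot A\). After whitening by \(P^{-1/2}\) it becomes \(Y\otimes I_m+I_n\otimes X\). Hence \(\widehat T_P\Kron=\mathcal L_{m,n}\), independent of \(P\). Pairing with \(I_n\otimes X\) and \(Y\otimes I_m\) shows that \(Z\perp\mathcal L_{m,n}\) if and only if \(\operatorname{Tr}_B Z=0\) and \(\operatorname{Tr}_A Z=0\).

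\emph{Subdifferential.} The function \(F(Y)=\lambda_{\max}(Y)-\lambda_{\min}(Y)\) is a sum of two convex functions, so its subdifferential at \(\log S\) is a sum of subdifferentials. The subdifferential of \(\lambda_{\max}\) at \(\log S\) is the set of density matrices with range in \(E_+\). The subdifferential of \(-\lambda_{\min}\) is the negative of the set of density matrices with range in \(E_-\). Hence \(\partial F(\log S)=\{Q_+-Q_-\}\) with \(Q_\pm\) as in the statement. By \cref{lem:log-spectral-pullback}, the relevant set is \(-\partial F=\{Q_--Q_+\}\); the sign does not matter because the normal space is a linear subspace. Combining with the normal space gives exactly the stated criterion.

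If \(S\) is scalar then \(\kappa=1\) and the claim is trivial. For simple extreme eigenvalues, \(Q_\pm\) is forced to be \(u_\pm u_\pm^\top\), which gives the pure-state reformulation. The main obstacle is rigor at repeated eigenvalues, namely matching the whitened AIRM subdifferential with the Euclidean \(\partial F\). \Cref{lem:log-spectral-pullback} supplies this, together with the normal-response law.

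\emph{Two levels.} Let \(R=\log S\). By \cref{thm:kron-ai-projection}, \(\operatorname{Tr}_B R=0\) and \(\operatorname{Tr}_A R=0\). With eigenvalues \(r_1>r_2\) and spectral projectors \(\Pi_\pm\) satisfying \(\Pi_++\Pi_-=I\), write \(R=r_2 I+(r_1-r_2)\Pi_+\). Applying \(\operatorname{Tr}_B\) gives \(\operatorname{Tr}_B\Pi_+=-\tfrac{r_2 n}{r_1-r_2}I_m\), a scalar matrix. Therefore \(\operatorname{Tr}_B\Pi_-=nI_m-\operatorname{Tr}_B\Pi_+\) is also scalar, and the same holds for \(\operatorname{Tr}_A\). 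Set \(Q_\pm=\Pi_\pm/\tr\Pi_\pm\). Each \(\operatorname{Tr}_BQ_\pm\) is a scalar matrix of trace \(1\), hence equals \(I_m/m\). Likewise each \(\operatorname{Tr}_AQ_\pm\) equals \(I_n/n\). So the criterion holds.

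\emph{Example.} Take \(G=I_6\). Then \(R=\log H=B\otimes A\), and
\[
  \operatorname{Tr}_B(B\otimes A)=\tr(B)\,A=0,\qquad
  \operatorname{Tr}_A(B\otimes A)=\tr(A)\,B=0,
\]
since \(\tr A=\tr B=0\). By the converse in \cref{thm:kron-ai-projection}, \(P=I_6\). The eigenvalues of \(B\otimes A\) are \(\pm2,\pm\tfrac12,\pm\tfrac32\), so \(\log\kappa(H)=4\).

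For the competing metric, everything is diagonal and commutes, so
\[
  \log(G^{-1}H)=B\otimes A-I_3\otimes\tfrac14A,
\]
with entries \(a_i(b_j-\tfrac14)\in\{\pm\tfrac74,\pm\tfrac34\}\). The spread is \(\tfrac72\), so \(\log\kappa(G^{-1}H)=\tfrac72<4\), which proves strict separation.
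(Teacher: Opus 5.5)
Your proposal is correct and follows the paper's proof. As in the paper, the criterion comes from the normal-response law, with the Kronecker normal space characterized by the two vanishing partial traces, and you verify the $2\times3$ example by the same direct computation.

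The only variation is the two-level case. You show that each extreme spectral projector has scalar partial traces, so both $Q_\pm$ have maximally mixed marginals. The paper instead uses $\tr\log S=0$ to write $Q_--Q_+$ as a negative multiple of $\log S$, which is normal. Both arguments rest on the same projection normal equations.
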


\begin{corollary}[Two-by-two completeness and minimal separation dimension]
\label{cor:kron-two-by-two-completeness}
For \(m=n=2\) and every \(H\in\SPD^4\), the unique AIRM projection
\(P_{\Kron}(H)\) is a globally best Kronecker preconditioner.  Consequently,
apart from a trivial one-dimensional factor, strict separation between AIRM
projection and condition-number optimality requires ambient dimension at least
six.  The \(2\times3\) construction in
\cref{thm:kron-nearest-versus-best} is therefore dimension-minimal among real
two-factor SPD Kronecker families.
\end{corollary}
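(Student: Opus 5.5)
The plan for the \(2\times2\) statement is to verify the extreme-state marginal criterion of \cref{thm:kron-nearest-versus-best} directly, using only the normal equations of \cref{thm:kron-ai-projection}. The dimension-minimality claims then follow by counting.

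\emph{Step 1: reduce to a traceless correlation operator.} Set \(P=P_{\Kron}(H)\) and \(S=P^{-1/2}HP^{-1/2}\). Then \(R_\star=\log S\), and \cref{thm:kron-ai-projection} gives \(\operatorname{Tr}_B(\log S)=0\) and \(\operatorname{Tr}_A(\log S)=0\).

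Let \(\sigma_0=I_2\) and let \(\sigma_x,\sigma_y,\sigma_z\) be the Pauli matrices. Expand \(\log S=\sum_{a,b}c_{ab}\,\sigma_a\otimes\sigma_b\) with \(a,b\in\{0,x,y,z\}\). The vanishing of both partial traces kills every coefficient with \(a=0\) or \(b=0\). Hence \(\log S=\sum_{a,b\in\{x,y,z\}}T_{ab}\,\sigma_a\otimes\sigma_b\) for a real \(3\times3\) matrix \(T\). Real symmetry of \(\log S\) forces \(T_{ab}=0\) whenever exactly one of \(a,b\) equals \(y\), but this restriction will not be needed.

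\emph{Step 2: diagonalize by local unitaries.} Take a singular value decomposition \(T=O_1\,\diag(t_x,t_y,t_z)\,O_2^\top\) with \(O_1,O_2\in SO(3)\), absorbing signs into the \(t_k\). Under the double cover \(SU(2)\to SO(3)\), each rotation is implemented by a unitary \(U\) or \(V\) acting on the Pauli vector. Consequently
\[
(U\otimes V)^*\log S\,(U\otimes V)=\sum_k t_k\,\sigma_k\otimes\sigma_k .
\]
This operator is diagonal in the Bell basis, with eigenvalues \(\pm t_x\pm t_y\pm t_z\) under the standard sign pattern. Every Bell state has both marginals equal to \(I_2/2\).

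Local unitary conjugation preserves the property "both marginals equal \(I_2/2\)", because \(\operatorname{Tr}_B\) and \(\operatorname{Tr}_A\) intertwine conjugation by \(U\otimes V\) with conjugation by \(U\) or \(V\). Therefore the complexified eigenspaces \(E_\pm\otimes\mathbb C\) contain unit vectors \(w_\pm\) whose pure states have maximally mixed marginals.

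\emph{Step 3: return to real density matrices.} Put \(Q_\pm=\operatorname{Re}(w_\pm w_\pm^*)\). Each \(Q_\pm\) is real, symmetric, positive semidefinite, and of trace one. Its range lies in the real eigenspace \(E_\pm\), since \(\operatorname{Re}(ww^*)=xx^\top+yy^\top\) where \(w=x+iy\) and \(x,y\in E_\pm\). Partial traces commute with taking real parts, so \(\operatorname{Tr}_B(Q_\pm)=\operatorname{Tr}_A(Q_\pm)=I_2/2\). The criterion of \cref{thm:kron-nearest-versus-best} therefore holds, and \(P\) is condition-optimal.

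The degenerate case \(S\propto I\) is immediate, since then \(P\) already achieves \(\kappa=1\). I expect the main obstacle to be Step 2: making the local-unitary diagonalization and the Bell-basis spectral statement precise, including sign conventions for \(\det O_i=-1\), and checking that nothing is lost when passing between the real and complex settings. That passage is exactly what Step 3 handles.

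\emph{Step 4: dimension count.} If one factor is one-dimensional, then \(\Kron_{m,n}\) is all of \(\SPD^{mn}\), and the projection \(H\) itself gives \(\kappa=1\). So nontrivial families need \(m,n\ge2\), which means \(mn\in\{4,6,8,\dots\}\). The values \(mn=5\) and \(mn\le3\) force a trivial factor.

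The case \(mn=4\) is exactly \(m=n=2\), which Steps 1–3 settle. Strict separation therefore needs \(mn\ge6\). The \(2\times3\) example of \cref{thm:kron-nearest-versus-best} attains \(mn=6\), which proves the minimality claim.
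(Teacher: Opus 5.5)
Your argument is correct and is essentially the paper's proof: the partial-trace normal equations remove every term with an identity factor, a local SVD of the remaining correlation coefficients puts \(\log S\) in Bell-diagonal form, maximally mixed Bell marginals then verify the criterion of \cref{thm:kron-nearest-versus-best}, and the dimension count is the same. The one difference is that you diagonalize over \(\mathbb C\) via \(SU(2)\to SO(3)\), which is why you need Step~3, whereas the paper uses the real symmetry you set aside (no mixed \(y\)-terms, and \(\sigma_y\otimes\sigma_y=-J\otimes J\)) to work with local \(O(2)\) conjugations and takes \(Q_\pm=\Pi_\pm/\rank\Pi_\pm\) directly; that choice would also let you drop Step~3, since these real spectral projectors are \(U\otimes V\)-conjugates of sums of Bell projectors.
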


The mechanism is specific to real two-level factors: zero partial traces put
the log residual, after local orthogonal conjugation, in a commuting
Bell-diagonal form, and every Bell spectral projector has maximally mixed
marginals.

\begin{corollary}[Marginal-mismatch descent certificate]
\label{cor:kron-marginal-mismatch}
In the setting of \cref{thm:kron-nearest-versus-best}, define
\[
  \mathcal D_\pm
  =
  \{Q\succeq0:\tr Q=1,\ \operatorname{range}(Q)\subseteq E_\pm\}
\]
and the compact convex set of tangent subgradients
\[
  \mathcal G(P)
  =
  \{\Pi_{\mathcal L}(Q_--Q_+):Q_\pm\in\mathcal D_\pm\}.
\]
Let \(g_\star\) be its unique minimum-Frobenius-norm element and set
\(\mu_{\mathrm{marg}}(P)=\fro{g_\star}\).  Then
\[
  \mu_{\mathrm{marg}}(P)=0
  \quad\Longleftrightarrow\quad
  P\in\argminop_{G\in\Kron_{m,n}}\log\kappa(G^{-1}H).
\]
If \(\mu_{\mathrm{marg}}(P)>0\), the Kronecker geodesic
\[
  G_t=P^{1/2}\exp(-t g_\star)P^{1/2}
\]
satisfies the strict one-sided descent identity
\[
  \frac{\dd}{\dd t}\bigg|_{t=0+}
  \log\kappa(G_t^{-1}H)
  =-\mu_{\mathrm{marg}}(P)^2.
\]
For simple extreme eigenvalues,
\(g_\star=\Pi_{\mathcal L}(u_-u_-^\top-u_+u_+^\top)\).  With multiplicity,
\(g_\star\) is obtained from a finite-dimensional convex conic problem over
the two extreme eigenspaces.
\end{corollary}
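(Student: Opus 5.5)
We treat \(\log\kappa(G^{-1}H)\) on the totally geodesic Kronecker submanifold as a geodesically convex function of \(G\), and compute its one-sided directional derivatives at \(P\) via \cref{lem:log-spectral-pullback}. The whitened tangent space at \(P\) is \(\widehat T_P\Kron=\mathcal L_{m,n}\); this holds because \(P^{-1/2}\Kron_{m,n}P^{-1/2}=\Kron_{m,n}\) by congruence invariance, and \(\Kron\) has tangent \(\mathcal L_{m,n}\) at \(I\) by \cref{thm:kron-log-projection}. Write \(S=P^{-1/2}HP^{-1/2}\) and take a direction \(X\in\mathcal L_{m,n}\) with Kronecker geodesic \(G_t=P^{1/2}e^{tX}P^{1/2}\). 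The relative matrix is congruent to \(e^{-tX/2}Se^{-tX/2}\), and \(\phi_{\mathrm{osc}}\) is a max minus a min of eigenvalues. So the lemma gives
\[
  \frac{\dd}{\dd t}\Big|_{0+}\log\kappa(G_t^{-1}H)
  =\max_{Q_\pm\in\mathcal D_\pm}\ip{Q_--Q_+}{X}
  =\max_{g\in\mathcal G(P)}\ip{g}{X}.
\]
The first equality uses the classical description \(\partial\lambda_{\max}(\log S)=\mathcal D_+\) and \(\partial(-\lambda_{\min})=-\mathcal D_-\), and the sum rule, since both functions are evaluated at the same point. The second equality holds because \(X\in\mathcal L\) lets us replace \(Q_--Q_+\) by its projection.

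\textbf{Equivalence.} By geodesic convexity (\cref{thm:structured-log-spectral-mother}) and the fact that every Kronecker point is joined to \(P\) by such a geodesic, \(P\) is a global minimizer if and only if this directional derivative is \(\ge0\) for all \(X\in\mathcal L\). This is precisely \cref{thm:normal-response-law} with \(\widehat N_P=\mathcal L^\perp\). We then apply a minimax/separation argument. Since \(\mathcal G(P)\) is compact and convex (a linear image of \(\mathcal D_-\times\mathcal D_+\)), we have \(\min_X\max_g\ip gX\ge0\) over unit \(X\) if and only if \(0\in\mathcal G(P)\), i.e.\ \(\mu_{\mathrm{marg}}(P)=0\). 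For the concrete direction: if \(g_\star\neq0\), take \(X=-g_\star\). The variational inequality for the minimum-norm point gives \(\ip{g}{g_\star}\ge\fro{g_\star}^2\) for every \(g\in\mathcal G(P)\), so \(\max_g\ip{g}{-g_\star}=-\fro{g_\star}^2\), with the maximum attained at \(g=g_\star\). This yields the descent identity and, simultaneously, non-optimality.

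\textbf{Remaining claims.} We also check that \(\Pi_{\mathcal L}(Q)=0\) is equivalent to the two partial-trace conditions. This follows from the formula in \cref{thm:kron-log-projection}: the trace term vanishes for \(Q_--Q_+\) since both have unit trace, and the factor terms are the partial traces. For simple extreme eigenvalues, \(\mathcal D_\pm\) are singletons, so \(\mathcal G(P)\) is a single point and \(g_\star=\Pi_{\mathcal L}(u_-u_-^\top-u_+u_+^\top)\). With multiplicity, minimizing \(\fro{\Pi_{\mathcal L}(Q_--Q_+)}^2\) over density matrices supported on \(E_\pm\) is a convex quadratic program over two spectraplexes, i.e.\ a finite-dimensional conic problem.

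The main obstacle is justifying the one-sided derivative formula at repeated extreme eigenvalues and confirming that \(\mathcal G(P)\) is exactly the projected subdifferential. We rely on \cref{lem:log-spectral-pullback}, which already covers repeated eigenvalues, applied to \(\phi_{\mathrm{osc}}\). We also note the sign convention: whitening by \(G_t\) corresponds to conjugation by \(e^{-tX/2}\).
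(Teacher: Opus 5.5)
Your proposal is correct and follows essentially the same route as the paper: it identifies the one-sided directional derivative along Kronecker geodesics as the support function of \(\mathcal G(P)\), reduces optimality to \(0\in\mathcal G(P)\), and uses the minimum-norm variational inequality to obtain the slope \(-\mu_{\mathrm{marg}}(P)^2\) along \(-g_\star\). You are more explicit than the paper in two places: you derive the support-function formula directly from \cref{lem:log-spectral-pullback}, and you replace the constrained Fermat rule by geodesic convexity plus a separation argument; the overall structure is unchanged.
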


\begin{corollary}[Residual certificate for the Kronecker projection solver]
\label{cor:kron-residual-certificate}
For \(G\in\Kron_{m,n}\), define
\[
  R_G(M)=\log(G^{-1/2}MG^{-1/2}),
  \qquad
  V(G)=\Pi_{\mathcal L}R_G(M).
\]
Then
\[
  V(G)=0\quad\Longleftrightarrow\quad G=P_{\Kron}(M),
\]
and
\[
  d_{\AI}(G,P_{\Kron}(M))\le\fro{V(G)},\qquad
  0\le f(G)-f(P_{\Kron}(M))\le\frac12\fro{V(G)}^2,
\]
where \(f(G)=\frac12d_{\AI}(G,M)^2\). Consequently \(\fro{V(G)}\) is a
certified stopping residual for a projected Riemannian solver.
\end{corollary}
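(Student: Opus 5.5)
We prove the equivalence first, then the two error bounds, working throughout on the Hadamard manifold \(\SPD^{mn}\) restricted to the closed totally geodesic submanifold \(\Kron_{m,n}\) (\cref{thm:kron-ai-projection}).

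\textbf{Gradient identification.} We first identify \(V(G)\) with the negative restricted Riemannian gradient of \(f(G)=\frac12 d_{\AI}(G,M)^2\). In the whitened chart at \(G\), the full-cone gradient of \(f\) is \(-G^{1/2}R_G(M)G^{1/2}\). Total geodesicity gives the whitened tangent space \(\widehat T_G\Kron=\mathcal L_{m,n}\), since \(G^{1/2}\exp(tL)G^{1/2}\in\Kron\) exactly when \(L\in\mathcal L_{m,n}\), by \cref{thm:kron-log-projection} after factoring \(G\). The restricted gradient in whitened coordinates is therefore \(-\Pi_{\mathcal L}R_G(M)=-V(G)\).

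\textbf{Equivalence.} By \cref{thm:kron-log-projection}, \(\Pi_{\mathcal L}L=0\) holds exactly when \(\operatorname{Tr}_B L=0\) and \(\operatorname{Tr}_A L=0\). The reverse implication follows from the multiplier formulas there: \(\tau=\tr L/(mn)\) vanishes because \(\tr L=\tr\operatorname{Tr}_B L\), and then \(X_0=Y_0=0\). The forward implication holds because the zero element of \(\mathcal L_{m,n}\) is represented with \(\tau=0\) and \(X_0,Y_0\) traceless, which forces the partial traces to vanish. Hence \(V(G)=0\) is exactly the pair of normal equations, and the converse clause of \cref{thm:kron-ai-projection} gives \(V(G)=0\iff G=P_{\Kron}(M)\).

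\textbf{Error bounds.} By \cref{lem:hadamard-projection-facts}, \(f\) is \(1\)-strongly geodesically convex on \(\Kron\). Let \(P=P_{\Kron}(M)\) and let \(\gamma\) be the unit-speed geodesic in \(\Kron\) from \(G\) to \(P\), with length \(\ell=d_{\AI}(G,P)\); since \(\Kron\) is totally geodesic, \(\gamma\) is also the ambient geodesic. Strong convexity along \(\gamma\) at both endpoints gives
\[
f(P)\ge f(G)+\langle -V(G),\dot\gamma(0)\rangle\,\ell+\tfrac12\ell^2,
\qquad
f(G)\ge f(P)+\tfrac12\ell^2,
\]
where the second inequality uses that \(P\) is a minimizer on \(\Kron\), so its restricted gradient vanishes. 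Adding the two inequalities and applying Cauchy--Schwarz to \(\langle V(G),\dot\gamma(0)\rangle\) yields \(\ell^2\le\fro{V(G)}\,\ell\), hence \(d_{\AI}(G,P)\le\fro{V(G)}\).

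For the objective gap, the first inequality minimized over \(\ell\), that is the Polyak--Łojasiewicz bound from strong convexity, gives
\[
f(G)-f(P)\le\sup_{\ell}\bigl(\fro{V}\ell-\tfrac12\ell^2\bigr)=\tfrac12\fro{V(G)}^2.
\]
Nonnegativity of the gap is immediate because \(P\) minimizes \(f\) on \(\Kron\).

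The main obstacle is making the gradient identification rigorous. It requires verifying that \(\Pi_{\mathcal L}\) computed in whitened coordinates at \(G\) is the Riemannian orthogonal projection onto \(\widehat T_G\Kron\). This holds because the affine-invariant inner product at \(G\) is the Frobenius inner product after whitening, and \(\mathcal L_{m,n}\) is invariant under the Kronecker congruence \(G^{-1/2}(\cdot)G^{-1/2}\).
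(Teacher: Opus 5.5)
Your proof is correct and follows essentially the same route as the paper: the equivalence is the partial-trace normal equation of \cref{thm:kron-ai-projection}, and the bounds come from $1$-strong geodesic convexity of $f$ on the totally geodesic $\Kron_{m,n}$ applied at $G$ and at the minimizer, combined to give $d_{\AI}(G,P_{\Kron}(M))\le\fro{V(G)}$ and then $f(G)-f(P_{\Kron}(M))\le\sup_{\ell}\bigl(\fro{V(G)}\ell-\ell^2/2\bigr)$. One side remark in your last paragraph is false as stated, since $\mathcal L_{m,n}$ is not invariant under congruence by $G^{-1/2}$ (e.g.\ for $G=B\otimes I_m$ with $B$ non-scalar, $I_n\otimes X$ is sent to $B^{-1}\otimes X\notin\mathcal L_{m,n}$ when $X$ is non-scalar). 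It is not needed, however: the gradient identification only uses $G^{-1/2}(T_G\Kron_{m,n})G^{-1/2}=\mathcal L_{m,n}$, which you already obtained from $G^{1/2}\exp(tL)G^{1/2}\in\Kron_{m,n}\iff L\in\mathcal L_{m,n}$.
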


\begin{corollary}[Armijo solver for the Kronecker projection]
\label{cor:kron-armijo-solver}
Fix \(G_0\in\Kron_{m,n}\), \(\bar\eta>0\), and
\(\gamma,c\in(0,1)\). Let \(f(G)=\frac12d_{\AI}(G,M)^2\), let
\[
  \mathcal Q_0=\{G\in\Kron_{m,n}:f(G)\le f(G_0)\},
\]
and let \(L_0\) be a Lipschitz constant for the restricted gradient of \(f\) on
a compact geodesic neighborhood of \(\mathcal Q_0\); such an \(L_0\) exists in
the finite-dimensional SPD cone. At iteration \(r\), set
\[
  V_r=\Pi_{\mathcal L}R_{G_r}(M).
\]
If \(V_r=0\), stop. Otherwise choose the largest
\(\eta_r\in\{\bar\eta,\bar\eta\gamma,\bar\eta\gamma^2,\ldots\}\) satisfying
\[
  f\!\left(G_r^{1/2}\exp(\eta_rV_r)G_r^{1/2}\right)
  \le
  f(G_r)-c\eta_r\fro{V_r}^2.
\]
Then
\[
  G_{r+1}=G_r^{1/2}\exp(\eta_rV_r)G_r^{1/2}
\]
is well defined, remains in \(\Kron_{m,n}\), and either terminates at
\(P_{\Kron}(M)\) or converges to \(P_{\Kron}(M)\). Moreover, there exists
\(\underline\eta\in(0,(2c)^{-1}]\), depending only on
\(\mathcal Q_0,L_0,\bar\eta,\gamma\), and \(c\), such that
\[
  f(G_r)-f(P_{\Kron}(M))
  \le
  (1-2c\underline\eta)^r
  \bigl[f(G_0)-f(P_{\Kron}(M))\bigr].
\]
\end{corollary}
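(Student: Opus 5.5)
The plan is to treat the iteration as Riemannian gradient descent for the $1$-strongly geodesically convex function $f$ on the complete totally geodesic submanifold $\Kron_{m,n}$ from \cref{thm:kron-ai-projection}. The key steps are well-definedness, a uniform Armijo step size, and a Polyak--Łojasiewicz-type inequality that converts per-step decrease into linear convergence of the objective gap.

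\emph{Well-definedness and invariance.} In the ambient AIRM the Riemannian gradient of $f$ at $G$, in whitened coordinates, is $-R_G(M)=-\log(G^{-1/2}MG^{-1/2})$. Since $\Kron_{m,n}$ is totally geodesic with whitened tangent space $\mathcal L_{m,n}$, the restricted gradient is $-\Pi_{\mathcal L}R_G(M)=-V(G)$. The whitened tangent space is $\mathcal L_{m,n}$ because for $G=B\otimes A$ we have $G^{-1/2}\dot G G^{-1/2}=I_n\otimes X+Y\otimes I_m$. For the same reason, $G_r^{1/2}\exp(\eta V_r)G_r^{1/2}$ is a geodesic of $\Kron_{m,n}$. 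Concretely, it equals $(B^{1/2}e^{\eta Y}B^{1/2})\otimes(A^{1/2}e^{\eta X}A^{1/2})$, because $I\otimes X$ and $Y\otimes I$ commute. Hence every iterate stays in $\Kron_{m,n}$. The Armijo inequality is then exactly the sufficient-decrease condition along the negative restricted gradient.

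\emph{Uniform step size.} By \cref{lem:hadamard-projection-facts}, together with the Lipschitz constant $L_0$ on a compact geodesic neighborhood of $\mathcal Q_0$, the backtracking terminates. The standard descent-lemma argument gives $f(\exp_G(-\eta\nabla f))\le f(G)-\eta(1-L_0\eta/2)\|\nabla f\|^2$. This yields acceptance for all $\eta\le\min\{\bar\eta,2(1-c)/L_0\}$, provided the trial points stay in the compact neighborhood. I expect this containment to be the main obstacle.

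To handle it, I would first note that $\mathcal Q_0$ is compact: it is closed, and it is bounded because the sublevel sets of $d^2$ are bounded and $\Kron_{m,n}$ is closed. On $\mathcal Q_0$, $\|V\|$ is bounded by some $V_{\max}$. Choosing the neighborhood radius $\rho$ and capping $\eta\le\rho/V_{\max}$ keeps every trial geodesic inside it. Since $f$ decreases monotonically, all iterates remain in $\mathcal Q_0$. The result is $\eta_r\ge\underline\eta:=\min\{\bar\eta,\gamma\cdot 2(1-c)/L_0,\gamma\rho/V_{\max}\}$. Shrinking this further if necessary ensures $\underline\eta\le(2c)^{-1}$, which keeps the contraction factor nonnegative. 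The resulting constant depends only on $\mathcal Q_0,L_0,\bar\eta,\gamma,c$.

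\emph{Linear rate.} By \cref{cor:kron-residual-certificate}, $f(G_r)-f(P)\le\frac12\fro{V_r}^2$, which is a PL inequality. Combining it with the Armijo decrease gives
\[
f(G_{r+1})-f(P)\le f(G_r)-f(P)-c\underline\eta\fro{V_r}^2\le(1-2c\underline\eta)\bigl[f(G_r)-f(P)\bigr].
\]
Iterating this gives the stated bound. Point convergence then follows from $1$-strong convexity, which gives $\frac12 d(G_r,P)^2\le f(G_r)-f(P)$; alternatively it follows from $d(G_r,P)\le\fro{V_r}$. Termination with $V_r=0$ forces $G_r=P_{\Kron}(M)$ by \cref{cor:kron-residual-certificate}.
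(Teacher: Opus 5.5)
Your proposal is correct and follows the paper's proof essentially step for step. The shared steps are: invariance of $\Kron_{m,n}$ via the commuting decomposition $V_r=Y_r\otimes I_m+I_n\otimes X_r$; a uniform lower bound on accepted steps from the Lipschitz descent lemma on a compact neighborhood of $\mathcal Q_0$; and the Polyak--{\L}ojasiewicz inequality $\fro{V_r}^2\ge 2[f(G_r)-f(P_{\Kron}(M))]$ from the residual certificate, which gives the linear rate. Your only departures are refinements: you make explicit the step cap $\eta\le\rho/V_{\max}$ that keeps trial geodesics inside the compact neighborhood, which the paper leaves implicit in its line-search lemma, and you get iterate convergence directly from the linear gap rate and $\tfrac12 d(G_r,P)^2\le f(G_r)-f(P)$ rather than through the paper's summability, cluster-point and uniqueness argument.
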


\begin{theorem}[Self-conditioned Kronecker \(K\)-target bounds]
\label{thm:kron-k-target}
Let
\[
  \Kron_{m,n,K}=\Kron_{m,n}\cap\Ck.
\]
Here \(\Ck\subset\SPD^{mn}\), so this target constrains the condition number of
the Kronecker metric itself. It is an auxiliary self-conditioning target. For a
fixed Hessian \(H\), the RDGC preconditioning target is instead
\(\{G\in\Kron_{m,n}:\kappa(G^{-1}H)\le K\}\), equivalently
\(\mathcal S_{\Kron}(H)\cap\Ck\) in relative coordinates. The two targets
coincide only under additional compatibility assumptions.

The self-conditioned set \(\Kron_{m,n,K}\) is closed and geodesically convex. If
\(G_\star=P_{\Kron}(M)\), then
\[
  d_{\AI}(M,\Kron_{m,n,K})^2
  \ge
  d_{\AI}(M,\Kron_{m,n})^2
  +
  d_{\AI}(G_\star,\Kron_{m,n,K})^2,
\]
and
\[
  d_{\AI}(M,\Kron_{m,n,K})
  \le
  d_{\AI}(M,\Kron_{m,n})
  +
  d_{\AI}(G_\star,\Kron_{m,n,K}).
\]
Moreover, if \(G_0=B_0\otimes A_0\in\Kron_{m,n}\) and
\[
  A_0=U\diag(e^{a_1},\ldots,e^{a_m})U^\top,\qquad
  B_0=V\diag(e^{b_1},\ldots,e^{b_n})V^\top
\]
with sorted \(a_i\) and \(b_j\), then
\[
  d_{\AI}(G_0,\Kron_{m,n,K})^2
  =
  \min_{x,y}
  \sum_{i=1}^m\sum_{j=1}^n(x_i+y_j-a_i-b_j)^2
\]
subject to
\[
  x_1\le\cdots\le x_m,\qquad
  y_1\le\cdots\le y_n,\qquad
  (x_m-x_1)+(y_n-y_1)\le\log K.
\]
\end{theorem}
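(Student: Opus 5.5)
The plan has three parts: convexity of the target, the two distance inequalities from Hadamard geometry, and a reduction of the explicit formula to aligned factor eigenbases.

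\textbf{Closedness and geodesic convexity.} By \cref{thm:kron-ai-projection}, \(\Kron_{m,n}\) is closed and totally geodesic, hence geodesically convex. For \(\Ck\), I would use that along an AIRM geodesic \(S_t=S_0\#_tS_1\) both \(\log\lambda_{\max}(S_t)\) and \(-\log\lambda_{\min}(S_t)\) are convex in \(t\). This is the instance \(\phi=\phi_{\mathrm{osc}}\), \(H_a=I\) of \cref{thm:structured-log-spectral-mother} on the full cone. Then \(\log\kappa\) is a closed geodesically convex function, and \(\Ck\) is a closed sublevel set. The intersection \(\Kron_{m,n,K}\) is closed and geodesically convex, and it is nonempty because it contains \(I_{mn}\).

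\textbf{The two inequalities.} Let \(Y\in\Kron_{m,n,K}\subseteq\Kron_{m,n}\). The obtuse-angle (Pythagorean) inequality for projection onto a closed geodesically convex set in a Hadamard manifold, as used in \cref{thm:restricted-geometrodynamic-mother} and \cref{lem:hadamard-projection-facts}, gives
\[
d_{\AI}(M,Y)^2\ge d_{\AI}(M,G_\star)^2+d_{\AI}(G_\star,Y)^2 .
\]
Since \(d_{\AI}(G_\star,Y)\ge d_{\AI}(G_\star,\Kron_{m,n,K})\), taking the infimum over \(Y\) yields the lower bound. For the upper bound, take \(Y\) to be the projection of \(G_\star\) onto \(\Kron_{m,n,K}\) and apply the triangle inequality.

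\textbf{Explicit formula.} Every point of \(\Kron_{m,n,K}\) has the form \(G=B\otimes A\). Then
\[
G_0^{-1/2}GG_0^{-1/2}=(B_0^{-1/2}BB_0^{-1/2})\otimes(A_0^{-1/2}AA_0^{-1/2}),
\]
so \cref{prop:kron-product-distance} gives
\[
d_{\AI}(G_0,G)^2=n\fro{L_A}^2+m\fro{L_B}^2+2\tr(L_A)\tr(L_B).
\]
Write \(x,y\) for the sorted log-eigenvalues of \(A,B\). The trace terms equal \(\sum_i(x_i-a_i)\) and \(\sum_j(y_j-b_j)\), which depend only on the spectra. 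The constraint \(\kappa(G)\le K\) reads \((x_m-x_1)+(y_n-y_1)\le\log K\), also spectral only. So for fixed spectra only \(\fro{L_A}\) and \(\fro{L_B}\) depend on the eigenvectors.

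The key step is the Lidskii-type inequality
\[
\fro{\log(A_0^{-1/2}AA_0^{-1/2})}\ge\norm{x-a}_2 \quad(\text{both vectors sorted}),
\]
with equality when \(A\) shares the eigenbasis \(U\) with matching order. I would derive it from the Gelfand--Naimark log-majorization
\[
\log\lambda^{\downarrow}(A)-\log\lambda^{\downarrow}(A_0)\prec\log\lambda(A_0^{-1}A)
\]
together with Schur convexity of \(\norm{\cdot}_2^2\), or cite the exponential-metric-increasing/Lidskii results in \cite{bhatia2007positive}. This is the step I expect to need the most care: it must be applied to each factor separately, and the scale gauge must be checked to be harmless. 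The gauge shifts \(x\mapsto x+c\mathbf 1\) and \(y\mapsto y-c\mathbf 1\) leave both the objective and the constraint unchanged.

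After replacing \(A,B\) by their aligned versions \(U\diag(e^{x})U^\top\) and \(V\diag(e^{y})V^\top\), the distance does not increase, the constraint is unchanged, and everything commutes. \Cref{prop:kron-fixed-basis}, or a direct expansion of the product formula, then gives \(\sum_{i,j}(x_i+y_j-a_i-b_j)^2\). Conversely, every feasible sorted pair \((x,y)\) is realized by an aligned Kronecker matrix in the target. The infimum therefore equals the stated minimum, which is attained because the objective is coercive modulo the gauge direction.
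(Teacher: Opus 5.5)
Your proposal is correct and follows the paper's proof essentially step for step. The shared steps are: geodesic convexity of $\Ck$ from convexity of the extreme log-eigenvalues along AIRM geodesics; the Hadamard Pythagorean and triangle inequalities through $G_\star$; and the product-distance expansion, where only the Frobenius terms depend on eigenvectors and are bounded factorwise by the sorted log-spectral difference, with equality for aligned eigenbases. The only difference is cosmetic: you justify the factorwise bound via Gelfand--Naimark log-majorization and Schur convexity, whereas the paper invokes the affine-invariant eigenvalue lower bound together with Hoffman--Wielandt.
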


\begin{theorem}[Hessian-relative Kronecker reachability]
\label{thm:kron-relative-reachability}
For a fixed \(H\in\SPD^{mn}\), define
\[
  \mathcal R_{K,\Kron}(H)
  =
  \{G\in\Kron_{m,n}:\kappa(G^{-1}H)\le K\}.
\]
Then the following are equivalent:
\begin{enumerate}[leftmargin=*,itemsep=0.2em]
  \item \(\mathcal R_{K,\Kron}(H)\ne\emptyset\);
  \item \(\mathcal C_{K,\Kron}(H)=\mathcal S_{\Kron}(H)\cap\Ck\ne\emptyset\);
  \item there exist \(A\succ0\), \(B\succ0\), and \(\lambda>0\) such that
  \[
    \lambda(B\otimes A)\preceq H\preceq K\lambda(B\otimes A).
  \]
\end{enumerate}
Equivalently, because the scale \(\lambda\) can be absorbed into one factor,
reachability is the feasibility of
\[
  B\otimes A\preceq H\preceq K(B\otimes A),
  \qquad A\succ0,\quad B\succ0.
\]
If these equivalent conditions hold, fix \(G_0\in\Kron_{m,n}\) and set
\(S_0=H^{-1/2}G_0H^{-1/2}\). Then
\[
  D_{K,\Kron}(S_0;H)
  =
  d_{\AI}\bigl(G_0,\mathcal R_{K,\Kron}(H)\bigr).
\]
Moreover, \(\mathcal R_{K,\Kron}(H)\) is a closed geodesically convex subset of
the Kronecker manifold, and the displayed distance is attained at a unique
projection point. Thus the endpoint reachability problem is exact, but the
Kronecker Loewner sandwich is generally a nonconvex feasibility problem in the
ambient matrix variable.

More strongly, suppose a strict feasible certificate \(G_s\in\Kron_{m,n}\)
satisfies
\[
  \log\kappa(G_s^{-1}H)\le\log K-\sigma,
  \qquad \sigma>0,
\]
and let \(R=d_{\AI}(G_0,G_s)>0\). On
\(\mathcal D=\Kron_{m,n}\cap\overline B_R(G_0)\), every
\(\Lambda>2R^2/\sigma\) makes
\[
  \frac12d_{\AI}(G_0,G)^2
  +\Lambda\pospart{\log\kappa(G^{-1}H)-\log K}
\]
an exact penalty for the unique target projection. The proximal iteration of
\cref{thm:restricted-geometrodynamic-mother} converges to that projection with
the global squared-distance contraction factor \((1+2\eta)^{-1}\) per step.
\end{theorem}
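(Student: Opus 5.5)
The plan is to prove the three parts of \cref{thm:kron-relative-reachability} in order: the equivalences, the distance identity, and the exact-penalty claim.

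\textbf{Equivalences.} I will first show that the three reachability conditions agree. Conditions (1) and (2) are equivalent because \(G\mapsto H^{-1/2}GH^{-1/2}\) is a bijection from \(\Kron_{m,n}\) onto \(\mathcal S_{\Kron}(H)\). It also preserves the relevant spectrum, since \(\kappa(G^{-1}H)=\kappa(H^{-1/2}GH^{-1/2})\).

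For (1)\(\Rightarrow\)(3), take \(G=B\otimes A\) with generalized eigenvalues of \(Hv=\mu Gv\) lying in \([\mu_{\min},\mu_{\max}]\) and \(\mu_{\max}/\mu_{\min}\le K\). Setting \(\lambda=\mu_{\min}\) gives \(\lambda G\preceq H\preceq\mu_{\max}G\preceq K\lambda G\). The converse (3)\(\Rightarrow\)(1) reads the sandwich as a spectral inclusion of \(G^{-1}H\) in \([\lambda,K\lambda]\). The scale \(\lambda\) is absorbed as \(\lambda(B\otimes A)=B\otimes(\lambda A)\). This step is routine, exactly as in \cref{thm:diag-lmi}.

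\textbf{Distance identity and convexity.} Next I will establish convexity, closedness, and the distance formula, reducing everything to \cref{thm:restricted-geometrodynamic-mother}. Take \(\mathcal M=\SPD^{mn}\) with \(d_{\AI}\) and \(\F=\Kron_{m,n}\). By \cref{thm:kron-ai-projection}, \(\F\) is closed and totally geodesic, hence geodesically convex. Take \(\mathcal J=\mathcal J_H=\log\kappa(G^{-1}H)\).

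By \cref{thm:structured-log-spectral-mother}, applied with \(q=1\), \(\rho=\mathrm{id}\) and \(\phi_{\mathrm{osc}}\), the objective \(\mathcal J_H\) is closed, geodesically convex, bounded below by \(0\), and continuous. Hence \(\mathcal R_{K,\Kron}(H)=\mathcal C_{\log K}\) is a closed, geodesically convex sublevel set, and it is nonempty under the equivalent conditions.

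\cref{thm:restricted-geometrodynamic-mother} then yields the unique projection and \(\mathfrak D=d_{\AI}(G_0,\mathcal R_{K,\Kron}(H))\). It remains to identify this with \(D_{K,\Kron}(S_0;H)\). Congruence by \(H^{-1/2}\) is an AIRM isometry, so admissible relative paths in \(\mathcal S_{\Kron}(H)\) correspond to Kronecker paths of equal length, and the endpoint condition \(S(T)\in\Ck\) is exactly \(G(T)\in\mathcal R_{K,\Kron}(H)\).

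The infimum of lengths of curves inside \(\Kron_{m,n}\) joining \(G_0\) to the target equals the ambient distance. Curves inside the family are ambient curves, so the infimum is at least the ambient distance. Conversely, the ambient geodesic from \(G_0\) to the projection point stays in \(\Kron_{m,n}\) by total geodesicity, so the infimum is attained there. This is the content of the remark after \cref{thm:restricted-geometrodynamic-mother} that \(\mathfrak D=D_{K,\F}\) for closed geodesically convex realizations, together with \cref{prop:submanifold-distance}.

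I expect this identification to be the main subtle point. One must check that the quotient gauge \((cA,c^{-1}B)\) does not alter lengths, and by \cref{prop:kron-line-element} the gauge direction has zero length. One must also check that the admissible path class is the class of absolutely continuous curves in the connected manifold \(\Kron_{m,n}\). The manifold is connected, being the image of the connected space \(\SPD^m\times\SPD^n\), so no component issue arises.

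\textbf{Nonconvexity remark.} I will justify the remark that the sandwich is generally nonconvex in the ambient variable with a short example. Midpoints of Kronecker products are typically not Kronecker products: \((B_1\otimes A_1+B_2\otimes A_2)/2\) generally has Kronecker rank \(2\).

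\textbf{Exact penalty and proximal contraction.} These follow verbatim by plugging \(\tau=\log K\), \(\mathcal J=\mathcal J_H\), and \(\F=\Kron_{m,n}\) into the second half of \cref{thm:restricted-geometrodynamic-mother}. The strict certificate \(G_s\) supplies the Slater margin \(\sigma\), with \(R=d_{\AI}(G_0,G_s)\). The constraint set \(\mathcal D=\Kron_{m,n}\cap\overline B_R(G_0)\) is closed and geodesically convex, being an intersection of two such sets in a Hadamard manifold, so that theorem applies and yields the contraction factor \((1+2\eta)^{-1}\) per step.
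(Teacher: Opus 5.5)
Your proposal is correct and follows essentially the paper's argument. It uses Rayleigh-quotient equivalences, then congruence by \(H^{-1/2}\) as an AIRM isometry together with total geodesicity of \(\Kron_{m,n}\) for the distance identity, and inherits the exact penalty and proximal contraction from \cref{thm:restricted-geometrodynamic-mother}. The only minor variation is in how you obtain geodesic convexity of \(\mathcal R_{K,\Kron}(H)\): you take it as a sublevel set of \(\mathcal J_H\) via \cref{thm:structured-log-spectral-mother}, whereas the paper writes it as \(\Kron_{m,n}\cap H^{1/2}\Ck H^{1/2}\) and applies the geometric-mean bounds on \(\lambda_{\max}\) and \(\lambda_{\min}\) directly.
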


\begin{corollary}[Kronecker expression threshold]
\label{cor:kron-kstar}
Define
\[
  K_{\Kron}^\star(H)
  =
  \min_{G\in\Kron_{m,n}}\kappa(G^{-1}H).
\]
The minimum is attained. Moreover, for every \(K\ge1\),
\[
  \mathcal R_{K,\Kron}(H)\ne\emptyset
  \quad\Longleftrightarrow\quad
  K\ge K_{\Kron}^\star(H).
\]
Thus \(K_{\Kron}^\star(H)\) is the intrinsic endpoint threshold for whether a
Kronecker metric can reach the Hessian-relative condition-number target.
\end{corollary}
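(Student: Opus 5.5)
We prove attainment first and then read off the equivalence from the definition of \(\mathcal R_{K,\Kron}(H)\).

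\textbf{Attainment.} The map \(G\mapsto\kappa(G^{-1}H)\) is invariant under positive rescaling of \(G\). It is also invariant under the factor gauge \((cA)\otimes(c^{-1}B)\). So we may normalize, say \(\det A=\det B=1\), and set \(G=\lambda(B\otimes A)\), absorbing the scale \(\lambda\).

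Fix a reference \(G_1\in\Kron_{m,n}\) and put \(K_1=\kappa(G_1^{-1}H)\). It suffices to minimize over the sublevel set \(\{G:\kappa(G^{-1}H)\le K_1\}\).

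On this sublevel set, pick the scale so that \(\lambda_{\min}(G^{-1}H)=1\). Then \(G\preceq H\preceq K_1G\), so \(H/K_1\preceq G\preceq H\). Hence the normalized representatives lie in the compact set
\[
\{G\in\SPD^{mn}:H/K_1\preceq G\preceq H\}.
\]
Moreover, \(\Kron_{m,n}\) is closed in \(\SPD^{mn}\) by \cref{thm:kron-ai-projection}. So the intersection of \(\Kron_{m,n}\) with this Loewner interval is compact and stays away from the boundary of the cone.

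The function \(G\mapsto\kappa(G^{-1}H)\) is continuous on \(\SPD^{mn}\). By Weierstrass it attains its minimum on this compact set. By scale invariance, that minimum is the infimum over all of \(\Kron_{m,n}\).

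Alternatively, one could use geodesic convexity (\cref{thm:structured-log-spectral-mother}) together with properness modulo scale. The compactness route is more elementary, so we use it.

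\textbf{Equivalence.} Suppose \(K\ge K^\star_{\Kron}(H)\). The minimizer \(G^\star\) satisfies \(\kappa(G^{\star-1}H)=K^\star\le K\), so \(G^\star\in\mathcal R_{K,\Kron}(H)\) and the set is nonempty.

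Conversely, suppose some \(G\in\mathcal R_{K,\Kron}(H)\). Then \(K\ge\kappa(G^{-1}H)\ge K^\star_{\Kron}(H)\) by the definition of the minimum.

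The threshold interpretation then follows from \cref{thm:kron-relative-reachability}. There, nonemptiness of \(\mathcal R_{K,\Kron}(H)\) is equivalent to the Loewner sandwich and to finiteness of \(D_{K,\Kron}\).

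\textbf{Main obstacle.} The only delicate step is compactness. One must check that the Loewner bounds keep both factors \(A\) and \(B\) bounded and bounded away from singular. Working directly with the product \(G\) in the closed set \(\Kron_{m,n}\) avoids handling the factors separately: the relevant set is the intersection of a closed set with a compact Loewner interval inside the open cone. This reduces the issue to the closedness already established in \cref{thm:kron-ai-projection}.
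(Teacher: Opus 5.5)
Your proposal is correct and is essentially the paper's argument: you use scale invariance to normalize, use the bound on the condition number to confine the normalized candidates to a compact subset of the open cone, invoke the closedness of \(\Kron_{m,n}\) from \cref{thm:kron-ai-projection}, and then read the equivalence off the definition of the minimum. The only difference is cosmetic: the paper fixes the gauge \(\det G=\det H\) along a minimizing sequence, whereas your normalization \(\lambda_{\min}(G^{-1}H)=1\) gives the explicit Loewner interval \(H/K_1\preceq G\preceq H\) and a direct Weierstrass argument.
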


\begin{theorem}[Residual-calibrated Kronecker threshold bracket]
\label{thm:kron-residual-threshold}
Assume \(mn\ge2\). Let \(H\in\SPD^{mn}\), \(G\in\Kron_{m,n}\), and
\[
  P=P_{\Kron}(H),\qquad
  R_G(H)=\log(G^{-1/2}HG^{-1/2}),\qquad
  V(G)=\Pi_{\mathcal L}R_G(H).
\]
Set
\[
  \eta=\fro{V(G)},\qquad
  \rho=d_{\AI}(H,G),\qquad
  \delta_-=\sqrt{\max\{\rho^2-\eta^2,0\}},
\]
and
\[
  \kappa_G=\kappa(G^{-1}H),\qquad
  \alpha_{mn}=\sqrt{\frac{mn}{\lfloor (mn)^2/4\rfloor}}.
\]
Then
\[
  \delta_-
  \le d_{\AI}(H,\Kron_{m,n})
  \le \rho
\]
and
\[
  \exp(\alpha_{mn}\delta_-)
  \le K_{\Kron}^\star(H)
  \le \kappa_G.
\]
Consequently, if \(\kappa_G\le K\), then \(G\) is a primal
Hessian-relative Kronecker certificate and
\[
  D_{K,\Kron}(S_0;H)\le d_{\AI}(G_0,G)
\]
for every \(G_0\in\Kron_{m,n}\), where \(S_0=H^{-1/2}G_0H^{-1/2}\). Moreover,
with \(\kappa_P=\kappa(P^{-1}H)\),
\[
  e^{-2\eta}\kappa_G\le\kappa_P\le e^{2\eta}\kappa_G.
\]
Writing \(\delta_\star=d_{\AI}(H,\Kron_{m,n})\), the condition-number loss
of using the nearest geometry rather than a best preconditioner obeys
\[
  0
  \le
  \log\frac{\kappa_P}{K_{\Kron}^\star(H)}
  \le
  \log\kappa_P-\alpha_{mn}\delta_\star
  \le
  \log\kappa_G+2\eta-\alpha_{mn}\delta_-.
\]
Thus \(e^{2\eta}\kappa_G\le K\) certifies that the exact projection
\(P_{\Kron}(H)\) reaches the target, while \(e^{-2\eta}\kappa_G>K\) certifies
that this particular projection candidate does not. Finally, if
\[
  \exp(\alpha_{mn}\delta_-)>K,
\]
then the full noncommuting Kronecker target
\(\mathcal R_{K,\Kron}(H)\) is empty.
\end{theorem}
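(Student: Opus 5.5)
The bracket is assembled from three earlier ingredients: the residual certificate of \cref{cor:kron-residual-certificate}, the sharp scale-closed threshold of \cref{thm:scale-closed-threshold}, and the Lipschitz dependence of the log-condition number on the metric in the affine-invariant distance.

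\emph{Distance bracket.} The upper bound \(d_{\AI}(H,\Kron_{m,n})\le\rho\) is immediate because \(G\in\Kron_{m,n}\). For the lower bound, apply \cref{cor:kron-residual-certificate} with \(M=H\) and \(f(G)=\frac12d_{\AI}(G,H)^2\). This gives
\[
f(G)-f(P)\le\tfrac12\eta^2,
\]
that is, \(\rho^2-\delta_\star^2\le\eta^2\). Hence \(\delta_\star^2\ge\rho^2-\eta^2\), and since \(\delta_\star\ge0\) we get \(\delta_\star\ge\delta_-\).

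\emph{Threshold bracket.} \(\Kron_{m,n}\) is closed under positive rescaling and \(mn\ge2\), so \cref{thm:scale-closed-threshold} applies. Together with the monotonicity of \(\exp\), it gives
\[
\log K^\star_{\Kron}(H)\ge\alpha_{mn}\delta_\star\ge\alpha_{mn}\delta_-.
\]
The upper bound \(K^\star_{\Kron}(H)\le\kappa_G\) holds by definition of the minimum in \cref{cor:kron-kstar}.

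\emph{Primal certificate.} If \(\kappa_G\le K\), then \(G\in\mathcal R_{K,\Kron}(H)\). The distance formula of \cref{thm:kron-relative-reachability} then gives \(D_{K,\Kron}(S_0;H)=d_{\AI}(G_0,\mathcal R_{K,\Kron}(H))\le d_{\AI}(G_0,G)\). Symmetrically, if \(\exp(\alpha_{mn}\delta_-)>K\), then \(K^\star_{\Kron}(H)>K\), and \cref{cor:kron-kstar} shows the target is empty.

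\emph{Comparing \(\kappa_P\) with \(\kappa_G\).} This is the main step. The plan is to show that \(G\mapsto\log\kappa(G^{-1}H)\) is \(2\)-Lipschitz with respect to \(d_{\AI}\), and then use \(d_{\AI}(G,P)\le\eta\) from \cref{cor:kron-residual-certificate}.

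For the Lipschitz bound, take any \(G_1,G_2\succ0\) with \(L=\log(G_1^{-1/2}G_2G_1^{-1/2})\). Then
\[
e^{\lambda_{\min}(L)}G_1\preceq G_2\preceq e^{\lambda_{\max}(L)}G_1,
\]
so the generalized eigenvalues of \((H,G_2)\) are those of \((H,G_1)\) scaled by factors lying in \([e^{-\lambda_{\max}(L)},e^{-\lambda_{\min}(L)}]\). Therefore
\[
|\log\kappa(G_2^{-1}H)-\log\kappa(G_1^{-1}H)|\le\lambda_{\max}(L)-\lambda_{\min}(L)\le2\opnorm{L}\le2\fro{L}.
\]
This yields \(e^{-2\eta}\kappa_G\le\kappa_P\le e^{2\eta}\kappa_G\). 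The Weyl-type monotonicity of generalized eigenvalues under Loewner sandwiches is the only point that needs care, and it follows from Courant--Fischer for the pencil.

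\emph{Loss chain.} The first inequality is \(0\le\log(\kappa_P/K^\star)\), which holds because \(K^\star\) is a minimum. The second subtracts the bound \(\log K^\star\ge\alpha_{mn}\delta_\star\) from the threshold bracket. The third uses \(\log\kappa_P\le\log\kappa_G+2\eta\) and \(\delta_\star\ge\delta_-\).

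\emph{Reachability of the exact projection.} If \(e^{2\eta}\kappa_G\le K\), then \(\kappa_P\le K\), so \(P\) reaches the target. If \(e^{-2\eta}\kappa_G>K\), then \(\kappa_P>K\), so \(P\) does not.
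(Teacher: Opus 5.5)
Your proposal is correct and follows the paper's proof essentially step for step. The residual certificate yields both \(\delta_\star\ge\delta_-\) and \(d_{\AI}(G,P)\le\eta\), the scale-closed threshold yields the lower bound on \(K^\star_{\Kron}(H)\), and the Loewner sandwich between \(G\) and \(P\) together with generalized Rayleigh quotients yields the two-sided bracket for \(\kappa_P\). The differences are cosmetic: you use the distance identity of \cref{thm:kron-relative-reachability} where the paper uses \cref{prop:kron-relative-candidate}, and you phrase the comparison as a general Lipschitz estimate. One small bonus: your intermediate oscillation bound \(\lambda_{\max}(L)-\lambda_{\min}(L)\le\sqrt2\fro{L}\) would in fact sharpen \(e^{\pm2\eta}\) to \(e^{\pm\sqrt2\eta}\).
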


\begin{proposition}[Fixed-basis Hessian-relative Kronecker exactness]
\label{prop:kron-fixed-basis-relative}
Fix orthogonal matrices \(U\in\R^{m\times m}\) and
\(V\in\R^{n\times n}\), and consider the fixed-basis subfamily
\[
  \Kron_{U,V}
  =
  \{(V\diag(e^{b_1},\ldots,e^{b_n})V^\top)
    \otimes
    (U\diag(e^{a_1},\ldots,e^{a_m})U^\top):a\in\R^m,\ b\in\R^n\}.
\]
Assume
\[
  H=(V\otimes U)\diag(h_{ij})(V\otimes U)^\top,
  \qquad h_{ij}>0,
\]
and set \(\ell_{ij}=\log h_{ij}\). Then reachability in
\(\Kron_{U,V}\) at target \(K\) is equivalent to the linear feasibility
problem
\[
  \exists a\in\R^m,\ b\in\R^n,\ c\in\R
  \quad\text{such that}\quad
  c\le \ell_{ij}-a_i-b_j\le c+\log K
  \quad\forall i,j.
\]
Equivalently, the fixed-basis threshold is
\[
  \log K^\star_{U,V}(H)
  =
  \min_{a,b}
  \left[
  \max_{i,j}(\ell_{ij}-a_i-b_j)
  -
  \min_{i,j}(\ell_{ij}-a_i-b_j)
  \right].
\]
Feasibility for this fixed-basis subfamily is a primal certificate for the full
Kronecker family. Infeasibility for the fixed-basis subfamily does not rule out
a noncommuting Kronecker metric with different factor eigenvectors.
\end{proposition}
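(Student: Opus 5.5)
The fixed-basis family is parameterized by \((a,b)\), so I will first write its relative condition number in closed form. Set \(W=V\otimes U\), which is orthogonal because \(U\) and \(V\) are. For \(G=(V\diag(e^{b})V^\top)\otimes(U\diag(e^{a})U^\top)\), the mixed-product rule gives
\[
  G=W\diag(e^{a_i+b_j})W^\top ,
\]
with the diagonal ordered consistently with the index pairs \((i,j)\) of \(H\). Hence
\[
  G^{-1}H=W\diag(e^{\ell_{ij}-a_i-b_j})W^\top .
\]
This is orthogonally similar to a positive diagonal matrix, so its generalized eigenvalues are exactly the numbers \(e^{\ell_{ij}-a_i-b_j}\). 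It follows that
\[
  \log\kappa(G^{-1}H)=\max_{i,j}(\ell_{ij}-a_i-b_j)-\min_{i,j}(\ell_{ij}-a_i-b_j).
\]
Since the map \((a,b)\mapsto G\) is onto \(\Kron_{U,V}\), reachability at \(K\) means that some \((a,b)\) makes this width at most \(\log K\).

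A finite set of reals has width at most \(\log K\) if and only if it lies in some interval \([c,c+\log K]\). For one direction take \(c=\min_{i,j}(\ell_{ij}-a_i-b_j)\); for the other, containment in such an interval bounds the width by \(\log K\). This gives the linear feasibility system in \((a,b,c)\).

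For the threshold formula, I define \(K^\star_{U,V}(H)\) as the infimum of \(\kappa(G^{-1}H)\) over \(\Kron_{U,V}\). Its logarithm is then the infimum over \(a,b\) of the width function above, so it remains to show that this infimum is attained. The width function \(\phi(a,b)\) is convex and piecewise linear, being a max of affine functions minus a min of affine functions. It is invariant along the subspace \(\{(a+t\mathbf 1,b+s\mathbf 1)\}\), and more generally constant on the lineality directions of the map \((a,b)\mapsto(a_i+b_j)\), which are \(a=t\mathbf1,\ b=-t\mathbf1\). I will therefore argue in one of two ways. One way is to write the minimization as the linear program minimizing \(w\) subject to \(c\le\ell_{ij}-a_i-b_j\le c+w\). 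This LP is feasible and bounded below by \(0\), so it attains its optimum. The other way is to restrict to the quotient by the lineality space, where \(\phi\) is coercive: \(\phi\) is at least \(\max_{i,j}|a_i+b_j-\text{mean}|\) minus a constant depending only on \(\ell\). Attainment is the one point requiring any care, and I will use the LP argument.

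The last two sentences of the statement need separate justification. Because \(\Kron_{U,V}\subseteq\Kron_{m,n}\), any feasible \(G\) in the subfamily is itself a member of \(\mathcal R_{K,\Kron}(H)\), and so it serves as a primal certificate in the sense of \cref{thm:kron-relative-reachability}. For the remark that fixed-basis infeasibility does not exclude the full family, I will point to \cref{thm:kron-nearest-versus-best}. With the standard basis \(U=I_2\), \(V=I_3\) and \(H=\exp(B\otimes A)\), the fixed-basis optimum is built from \(\ell_{ij}=A_{ii}B_{jj}\). That example may happen to already achieve the full optimum, so it is safer to state the claim as ``does not in general'' and give no counterexample: a fixed choice of \((U,V)\) can be misaligned with the eigenvectors of \(H\), in which case the hypothesis on \(H\) simply fails. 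This final point is interpretive rather than a theorem, and I expect no obstacle there.
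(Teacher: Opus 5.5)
Your argument is correct and follows the paper's proof step for step: simultaneous diagonalization of \(G\) and \(H\) in the basis \(V\otimes U\), the width formula for \(\log\kappa(G^{-1}H)\), the interval reformulation via \(c\), and the inclusion \(\Kron_{U,V}\subset\Kron_{m,n}\) for the primal certificate, with your LP argument supplying the attainment that the paper takes for granted when it writes \(\min\) (in the unused coercivity aside, \((t\mathbf 1,-t\mathbf 1)\) is a special case of the invariance subspace \(\{(t\mathbf 1,s\mathbf 1)\}\), not a generalization of it, and coercivity holds only modulo the latter). For the final sentence, drop the misalignment remark, since under the hypothesis \(V\otimes U\) does diagonalize \(H\), and keep only the logical caveat the paper states, because your suspicion about the \(2\times3\) example is in fact correct in general: the congruences \(G\mapsto TGT\) with \(T=V\diag(t)V^\top\otimes U\diag(s)U^\top\), \(s,t\) sign vectors, fix \(I\) and \(H\) and preserve both \(\Kron_{m,n}\) and \(\log\kappa(G^{-1}H)\), so the nearest point to \(I\) in the nonempty, closed, geodesically convex set of Kronecker minimizers (\cref{cor:kron-kstar,thm:structured-log-spectral-mother}) is fixed by every such \(T\), which forces its factors to be diagonal in \(V\) and \(U\); hence \(K^\star_{U,V}(H)=K^\star_{\Kron}(H)\) whenever \(H\) is diagonal in \(V\otimes U\), and fixed-basis infeasibility is then actually a full-family obstruction.
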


\begin{theorem}[Fixed-basis primal--dual Kronecker obstruction]
\label{thm:kron-fixed-basis-dual}
In the setting of \cref{prop:kron-fixed-basis-relative}, assume \(m,n\ge2\) and
let \(L=(\ell_{ij})\). The optimal fixed-basis log-threshold
\[
  \tau^\star_{U,V}(H)
  =
  \min_{a,b}
  \left[
  \max_{i,j}(\ell_{ij}-a_i-b_j)
  -
  \min_{i,j}(\ell_{ij}-a_i-b_j)
  \right]
\]
has the exact dual representation
\[
  \tau^\star_{U,V}(H)
  =
  \max_W \ip{W}{L},
\]
where the maximum is over \(W\in\R^{m\times n}\) satisfying
\[
  \sum_j W_{ij}=0\quad\forall i,\qquad
  \sum_i W_{ij}=0\quad\forall j,\qquad
  \sum_{i,j}|W_{ij}|\le2.
\]
Equivalently, any nonzero optimal witness can be normalized so that
\(\sum_{ij}W_{ij}^+=\sum_{ij}W_{ij}^-=1\). Hence any feasible dual witness
with
\[
  \ip{W}{L}>\log K
\]
certifies that the fixed-basis Kronecker target is infeasible.
\end{theorem}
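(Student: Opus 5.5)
We prove the dual representation by casting the fixed-basis threshold as a linear program and applying strong LP duality. Introduce variables \(a\in\R^m\), \(b\in\R^n\), \(c,t\in\R\), and write the threshold as
\[
  \tau^\star_{U,V}(H)=\min\{t:\ c\le \ell_{ij}-a_i-b_j\le c+t\ \ \forall i,j\}.
\]
This is exactly the reformulation behind \cref{prop:kron-fixed-basis-relative}. The program is feasible, for instance with \(a=b=0\), \(c=\min\ell\) and \(t=\max\ell-\min\ell\). Its value is bounded below by \(0\). Hence the LP attains its minimum and strong duality holds.

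Attach multipliers \(W^+_{ij}\ge0\) to the constraints \(\ell_{ij}-a_i-b_j\le c+t\) and \(W^-_{ij}\ge0\) to the constraints \(c\le\ell_{ij}-a_i-b_j\). Minimizing the Lagrangian over the free variables produces the dual constraints.
\begin{itemize}
  \item Stationarity in \(t\) gives \(\sum_{ij}W^+_{ij}=1\).
  \item Stationarity in \(c\) gives \(\sum_{ij}W^+_{ij}=\sum_{ij}W^-_{ij}\).
  \item Stationarity in \(a_i\) and \(b_j\) gives zero row and column sums of \(W:=W^+-W^-\).
\end{itemize}
The resulting dual objective is \(\sum_{ij}(W^+_{ij}-W^-_{ij})\ell_{ij}=\ip{W}{L}\). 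So the dual is to maximize \(\ip{W}{L}\) over differences \(W=W^+-W^-\) of nonnegative matrices with unit total masses and zero row and column sums.

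Next we identify this dual feasible set with the set in the statement. Any such \(W\) satisfies \(\sum|W_{ij}|\le\sum(W^+_{ij}+W^-_{ij})=2\), so the LP-dual set lies inside the stated set. For the reverse direction, take \(W\) with zero margins and \(\sum|W_{ij}|\le2\). Zero total sum forces \(\sum W^+=\sum W^-=s\le1\), where \(W^\pm\) now denote the positive and negative parts. If \(s=1\), then \(W\) is dual feasible. If \(0<s<1\), the scaled matrix \(W/s\) is dual feasible. If \(s=0\), then \(W=0\).

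It then suffices to show that the dual optimum is nonnegative, since then \(\ip{W}{L}\le\max(0,\ip{W/s}{L})\le\tau^\star\), using \(\tau^\star\ge0\). For this, exhibit a zero-value dual-feasible witness when \(m,n\ge2\). The witness is the \(2\times2\) alternating pattern \(\tfrac12(e_{11}-e_{12}-e_{21}+e_{22})\), embedded in the matrix. Its positive and negative masses are each \(1\), it has zero margins, and it is orthogonal to every additive matrix \(a_i+b_j\). Adding \(t\,\cdot\) (such a witness) does not yield zero value in general. Instead, the bound \(\ip{W}{L}\le\tau^\star\) for scaled \(W\) with \(s<1\) follows from homogeneity, once we know that the maximum of the LP dual is itself \(\ge0\).

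Nonnegativity of the dual maximum can be argued in two ways. Weak duality gives \(\ip{W}{L}\le\tau^\star\) for every normalized \(W\), and \(\tau^\star\ge0\) holds trivially from the primal, since a maximum minus a minimum is nonnegative. Combining these gives the identity \(\tau^\star=\max_W\ip{W}{L}\) over the stated set, with the maximizer normalized so that \(\sum W^+=\sum W^-=1\). A nonzero optimal \(W\) with \(s<1\) and positive value could be rescaled to improve the value. So every nonzero optimal witness is either already normalized or of value zero, and in the latter case a normalized zero-value witness exists by the alternating pattern.

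The certificate statement is then immediate. A feasible \(W\) with \(\ip{W}{L}>\log K\) gives \(\tau^\star>\log K\), and \cref{prop:kron-fixed-basis-relative} then says the fixed-basis target is infeasible.

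The main obstacle is the bookkeeping that converts the LP dual, with its exact unit masses, into the relaxed constraint \(\sum|W_{ij}|\le2\). This is exactly where the hypothesis \(m,n\ge2\) enters: it guarantees a normalized zero-margin witness. Without it, for instance when \(m=1\), the zero-margin condition forces \(W=0\), the normalized dual set is empty, and zero value must then be handled separately.
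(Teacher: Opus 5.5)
Your proof is correct. It reaches the same dual as the paper, but through a different formulation. The paper never introduces the epigraph variables $c,t$. Instead it writes the oscillation as the support function $\max_{p,q\in\Delta_{mn}}\ip{p-q}{Z}$ and notes that $\{p-q:p,q\in\Delta_{mn}\}$ is exactly $\mathcal C=\{W:\sum_{ij}W_{ij}=0,\ \sum_{ij}|W_{ij}|\le2\}$. It then dualizes $\min_{A\in\mathcal A}\max_{W\in\mathcal C}\ip{W}{L-A}$, and minimizing over the subspace $\mathcal A$ of additive matrices forces zero row and column sums. Because $\mathcal C$ already carries the relaxed constraint $\sum_{ij}|W_{ij}|\le2$, no normalization step arises.

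Your explicit Lagrangian of the $(a,b,c,t)$ program is more elementary. It also gives complementary slackness for free: an optimal $W^+$ is supported on arg-max entries of $\ell_{ij}-a_i-b_j$, and $W^-$ on arg-min entries. The cost is that you must reconcile unit masses with the $\ell_1$ constraint, which you do correctly via $W\mapsto W/s$, weak duality, and $\tau^\star\ge0$. That detour is avoidable. Your multipliers $W^\pm$ are independent nonnegative matrices, not the positive and negative parts of $W$. So if $W$ has parts of mass $s<1$, adding $(1-s)e_{11}$ to both parts gives admissible multipliers whose difference is $W$. This is exactly the paper's identification of $\{p-q\}$ with $\mathcal C$.

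Two side remarks are wrong, though neither carries weight in the argument.
\begin{enumerate}
  \item The alternating pattern $\tfrac12(e_{11}-e_{12}-e_{21}+e_{22})$ is not a zero-value witness in general. Its value is $\tfrac12(\ell_{11}-\ell_{12}-\ell_{21}+\ell_{22})$, which vanishes when $L$ is additive, i.e.\ when $\tau^\star=0$. That is the only case where you use it, and there $W/s$ would serve anyway.
  \item The hypothesis $m,n\ge2$ is not where anything enters. For $m=1$ your LP dual is still nonempty ($W^+=W^-=e_{11}$, so $W=0$). Also $\tau^\star=0$ because the $b_j$ equalize all residuals, so the identity holds trivially. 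The paper's proof never uses the hypothesis either.
\end{enumerate}
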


\begin{theorem}[Soundness of \(\mathrm{KRON\text{-}CERTIFY}\)]
\label{thm:kron-certify-soundness}
Consider the following certificate procedure. If a fixed Kronecker eigenbasis
is supplied, solve the primal--dual fixed-basis LP in
\cref{prop:kron-fixed-basis-relative,thm:kron-fixed-basis-dual}.  It records
the fixed-basis feasible status for a primal solution, and the fixed-basis
obstruction status when a feasible dual witness satisfies
\(\ip{W}{L}>\log K\).

In the full noncommuting branch, compute \(\rho,\eta,\delta_-\), and
\(\kappa_G\) for an iterate \(G\in\Kron_{m,n}\) as in
\cref{thm:kron-residual-threshold}.  The four non-inconclusive tests are
\begin{enumerate}[leftmargin=*,itemsep=0.1em]
  \item \(\mathrm{GLOBAL\_INFEASIBLE}\) if
  \(\exp(\alpha_{mn}\delta_-)>K\);
  \item \(\mathrm{FEASIBLE\_DIRECT}\) if \(\kappa_G\le K\);
  \item \(\mathrm{PROJECTED\_FEASIBLE}\) if
  \(e^{2\eta}\kappa_G\le K\);
  \item \(\mathrm{PROJECTED\_INFEASIBLE}\) if
  \(e^{-2\eta}\kappa_G>K\).
\end{enumerate}
If no certificate fires, the procedure records \(\mathrm{INCONCLUSIVE}\).

Every non-inconclusive statement returned by this procedure is correct, with
the stated scope: full-family infeasibility, full-family direct feasibility,
exact-projection feasibility or infeasibility, fixed-basis feasibility, or
fixed-basis obstruction.
\end{theorem}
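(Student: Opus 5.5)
The proof is a case-by-case soundness check. Each non-inconclusive output of $\mathrm{KRON\text{-}CERTIFY}$ is the conclusion of an inequality or implication already proved earlier, so we only have to match each label with the right result and confirm that its hypotheses hold whenever the test fires.

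\emph{Fixed-basis branch.} Suppose a primal solution $(a,b,c)$ is returned. By \cref{prop:kron-fixed-basis-relative}, it gives $G\in\Kron_{U,V}$ with $\kappa(G^{-1}H)\le K$. Since $\Kron_{U,V}\subset\Kron_{m,n}$, this is also a full-family primal certificate, but the statement claims only fixed-basis feasibility. Now suppose a feasible dual witness $W$ with $\ip{W}{L}>\log K$ is returned. Weak duality, which is one half of \cref{thm:kron-fixed-basis-dual}, gives $\tau^\star_{U,V}(H)\ge\ip{W}{L}>\log K$. By the threshold formula in \cref{prop:kron-fixed-basis-relative}, no member of $\Kron_{U,V}$ reaches $K$. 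The scope must stay limited to the fixed basis, since the proposition explicitly warns that noncommuting metrics are not excluded.

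\emph{Full branch.} Each of the four tests reads off one clause of \cref{thm:kron-residual-threshold} for the given iterate $G\in\Kron_{m,n}$.
\begin{enumerate}
\item If $\exp(\alpha_{mn}\delta_-)>K$, the lower bracket gives $K^\star_{\Kron}(H)>K$. \Cref{cor:kron-kstar} then shows $\mathcal R_{K,\Kron}(H)=\emptyset$, which is full-family infeasibility.
\item If $\kappa_G\le K$, then $G$ itself lies in $\mathcal R_{K,\Kron}(H)$. This is direct feasibility, and it also yields the distance bound $D_{K,\Kron}(S_0;H)\le d_{\AI}(G_0,G)$.
\item If $e^{2\eta}\kappa_G\le K$, the two-sided estimate $\kappa_P\le e^{2\eta}\kappa_G$ gives $\kappa_P\le K$. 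The exact projection $P=P_{\Kron}(H)$ therefore reaches the target.
\item If $e^{-2\eta}\kappa_G>K$, the estimate $\kappa_P\ge e^{-2\eta}\kappa_G>K$ shows that $P$ does not reach the target. This is a statement about $P$ only, not about the whole family.
\end{enumerate}
When $mn=1$, every condition number equals $1$, so the tests are trivially sound; this covers the hypothesis $mn\ge2$ needed by the bracket theorem.

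The only delicate point is scope bookkeeping. Tests 3 and 4 speak about $P$ and never about the family, so a PROJECTED\_INFEASIBLE output combined with a FEASIBLE\_DIRECT output is not a contradiction. Likewise, fixed-basis obstructions must not be promoted to global ones. Several tests may fire simultaneously, for instance tests 1 and 2 cannot both fire since they would contradict each other, but the others can. Because each returned statement is true on its own, their conjunction is also consistent. Correctness here is taken with exact arithmetic; any floating-point concerns are handled separately by the interval-safe corollary.
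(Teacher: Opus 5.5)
Your proof is correct and follows the paper's own argument. The paper also sends the fixed-basis statuses to \cref{prop:kron-fixed-basis-relative,thm:kron-fixed-basis-dual} and the four full-branch statuses to \cref{thm:kron-residual-threshold}; your weak-duality and \cref{cor:kron-kstar} steps only spell out what those results already state. Two side remarks are slightly off but do not affect soundness: tests 1 and 3, and tests 3 and 4, are also mutually exclusive; and for $mn=1$ the constant $\alpha_{mn}$ is undefined because $\lfloor 1/4\rfloor=0$, so test 1 is not well posed there and that case is excluded by the hypothesis $mn\ge2$ inherited from \cref{thm:kron-residual-threshold} rather than being trivially sound.
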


\begin{corollary}[Interval-safe full Kronecker certification]
\label{cor:kron-certify-interval}
Let the exact quantities in \cref{thm:kron-residual-threshold} have validated
enclosures
\[
  0\le \underline\rho\le\rho,
  \qquad
  0\le\eta\le\overline\eta,
  \qquad
  1\le\underline\kappa\le\kappa_G\le\overline\kappa.
\]
Set
\[
  \underline\delta
  =
  \sqrt{\max\{\underline\rho^2-\overline\eta^2,0\}}.
\]
If the displayed exponentials, products, and final comparisons are also
evaluated with outward rounding, equivalently as validated log-domain
comparisons, then each of the following finite-precision tests is sound:
\[
\begin{array}{rcl}
\exp(\alpha_{mn}\underline\delta)>K
&\Longrightarrow& \mathcal R_{K,\Kron}(H)=\emptyset,\\[0.2em]
\overline\kappa\le K
&\Longrightarrow& G\in\mathcal R_{K,\Kron}(H),\\[0.2em]
e^{2\overline\eta}\overline\kappa\le K
&\Longrightarrow& P_{\Kron}(H)\in\mathcal R_{K,\Kron}(H),\\[0.2em]
e^{-2\overline\eta}\underline\kappa>K
&\Longrightarrow& P_{\Kron}(H)\notin\mathcal R_{K,\Kron}(H).
\end{array}
\]
If none fires, the interval-safe full-family branch returns
\(\mathrm{INCONCLUSIVE}\).  The enclosures may come from interval arithmetic
or from certified backward-error bounds for the eigensolver and matrix
logarithm; unvalidated floating-point estimates alone do not meet the
hypothesis.
\end{corollary}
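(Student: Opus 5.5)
The plan is to reduce each finite-precision test to the corresponding exact test of \cref{thm:kron-residual-threshold}, as packaged in \cref{thm:kron-certify-soundness}, by monotonicity. Every quantity in a firing condition is a monotone function of the enclosed inputs. So whenever the test fires on the enclosure endpoints, the exact test also fires, and the exact conclusion is already known to be correct. Outward rounding of the exponentials, products, and final comparisons enters only at the end: it guarantees that the computed comparison ``\(\ge\) or \(>\)'' really implies the same relation between the true real numbers of the enclosure expressions. Equivalently, we work with validated log-domain comparisons such as \(\alpha_{mn}\underline\delta>\log K\).

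First, the infeasibility test. The map \((\rho,\eta)\mapsto\sqrt{\max\{\rho^2-\eta^2,0\}}\) is nondecreasing in \(\rho\ge0\) and nonincreasing in \(\eta\ge0\). With \(\underline\rho\le\rho\) and \(\eta\le\overline\eta\), this gives \(\underline\delta\le\delta_-\). Since \(\alpha_{mn}>0\) and \(\exp\) is increasing, the fired test \(\exp(\alpha_{mn}\underline\delta)>K\) yields \(\exp(\alpha_{mn}\delta_-)>K\). \Cref{thm:kron-residual-threshold} then gives \(K^\star_{\Kron}(H)\ge\exp(\alpha_{mn}\delta_-)>K\). By \cref{cor:kron-kstar}, this means \(\mathcal R_{K,\Kron}(H)=\emptyset\).

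Second, the direct test. From \(\kappa_G\le\overline\kappa\le K\) we get \(G\in\mathcal R_{K,\Kron}(H)\) by definition.

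Third, the projected-feasibility test. Because \(\eta\le\overline\eta\) and \(\kappa_G\le\overline\kappa\), we have \(e^{2\eta}\kappa_G\le e^{2\overline\eta}\overline\kappa\le K\). The bracket \(\kappa_P\le e^{2\eta}\kappa_G\) from \cref{thm:kron-residual-threshold} then gives \(\kappa_P\le K\), that is, \(P_{\Kron}(H)\in\mathcal R_{K,\Kron}(H)\).

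Fourth, the projected-infeasibility test. We have \(e^{-2\eta}\kappa_G\ge e^{-2\overline\eta}\underline\kappa>K\). The lower bracket \(\kappa_P\ge e^{-2\eta}\kappa_G\) then gives \(\kappa_P>K\), so \(P_{\Kron}(H)\notin\mathcal R_{K,\Kron}(H)\).

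If no test fires, returning \(\mathrm{INCONCLUSIVE}\) asserts nothing, so soundness holds trivially in that case.

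The only delicate step is making the rounding claim precise. A computed value \(\widehat x\) of an expression in the enclosure endpoints is only an approximation. I would formalize the hypothesis as follows: each fired comparison is evaluated by interval arithmetic, and the test fires only if the entire computed interval for the left-hand side lies strictly (or weakly) on the correct side of \(K\). This implies the real-number inequality for the endpoint expressions, and the monotonicity chain above then takes over. The lower endpoint \(\underline\delta\) needs one extra remark. The square root and \(\max\) are monotone, so outward rounding must round \(\underline\delta\) downward. Any downward-rounded value is still \(\le\delta_-\), which preserves the argument.
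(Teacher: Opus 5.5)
Your proposal is correct and follows the same route as the paper. Both arguments use monotonicity to obtain \(\underline\delta\le\delta_-\) and the chain \(e^{-2\overline\eta}\underline\kappa\le e^{-2\eta}\kappa_G\le\kappa_P\le e^{2\eta}\kappa_G\le e^{2\overline\eta}\overline\kappa\), then apply the exact threshold lower bound and projection bracket of \cref{thm:kron-residual-threshold}; your paragraph on outward rounding (interval-valued comparisons, downward rounding of \(\underline\delta\)) only spells out what the paper folds into the hypothesis.
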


\begin{proposition}[Hessian-relative Kronecker candidate certificate]
\label{prop:kron-relative-candidate}
Let \(H\in\SPD^{mn}\), \(G_0=B_0\otimes A_0\in\Kron_{m,n}\), and
\(G_c=B_c\otimes A_c\in\Kron_{m,n}\). Define
\[
  S_0=H^{-1/2}G_0H^{-1/2},
  \qquad
  S_c=H^{-1/2}G_cH^{-1/2}.
\]
If
\[
  \kappa(S_c)\le K,
\]
equivalently \(\kappa(G_c^{-1}H)\le K\), then \(G_c\) is a primal
Hessian-relative Kronecker certificate and
\[
  D_{K,\Kron}(S_0;H)\le d_{\AI}(G_0,G_c).
\]
With
\[
  L_A=\log(A_0^{-1/2}A_cA_0^{-1/2}),\qquad
  L_B=\log(B_0^{-1/2}B_cB_0^{-1/2}),
\]
this upper bound has the closed form
\[
  d_{\AI}(G_0,G_c)^2
  =
  n\fro{L_A}^2+m\fro{L_B}^2
  +2\tr(L_A)\tr(L_B).
\]
Thus any candidate Kronecker metric passing the generalized condition-number
test supplies a finite restricted-complexity upper bound. If a proposed
candidate fails the test, no infeasibility conclusion follows.
\end{proposition}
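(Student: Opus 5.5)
The statement to prove is \cref{prop:kron-relative-candidate}, which has three parts: the certificate property, the distance bound, and the closed form. Each reduces to earlier results, and I would treat them in that order.

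\emph{Certificate property.} First I would record the equivalence \(\kappa(S_c)=\kappa(G_c^{-1}H)\). The matrices \(S_c=H^{-1/2}G_cH^{-1/2}\) and \(H^{1/2}G_c^{-1}H^{1/2}\) are inverse to each other. The latter is similar to \(G_c^{-1}H\). Since \(\kappa\) is invariant under inversion and under similarity, the two condition numbers agree, as already noted in \cref{sec:information-rdgc}. Hence \(\kappa(S_c)\le K\) places \(G_c\) in \(\mathcal R_{K,\Kron}(H)\). Equivalently, it places \(S_c\) in \(\mathcal S_{\Kron}(H)\cap\Ck\). This is what is meant by a primal certificate, and it makes the target nonempty.

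\emph{Distance bound.} Since the target is nonempty, the equivalent conditions of \cref{thm:kron-relative-reachability} hold. That theorem gives
\[
  D_{K,\Kron}(S_0;H)=d_{\AI}\bigl(G_0,\mathcal R_{K,\Kron}(H)\bigr).
\]
The right-hand side is an infimum over \(\mathcal R_{K,\Kron}(H)\), and \(G_c\) belongs to that set, so it is at most \(d_{\AI}(G_0,G_c)\).

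If one prefers not to invoke the set-distance identity, a direct path argument works. The Kronecker manifold is totally geodesic by \cref{thm:kron-ai-projection}. So the ambient AIRM geodesic \(G_t=G_0^{1/2}(G_0^{-1/2}G_cG_0^{-1/2})^tG_0^{1/2}\) stays in \(\Kron_{m,n}\). Congruence by \(H^{-1/2}\) is an AIRM isometry, so this geodesic maps to an admissible relative path of length \(d_{\AI}(G_0,G_c)\). That path ends in \(\Ck\), which bounds the infimum in \cref{def:restricted-complexity}.

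\emph{Closed form.} The closed form is exactly \cref{prop:kron-product-distance} applied with \(G_1=G_c\).

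\emph{Final remark.} For the statement that a failing candidate gives no infeasibility conclusion, it suffices to note that the certificate is only a sufficient condition. \cref{thm:kron-nearest-versus-best} already exhibits a metric (\(I_6\)) that fails a threshold, for instance \(K=e^{7/2}\), while another Kronecker metric meets it.

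\emph{Main obstacle.} The only delicate point is the admissibility and isometry of the transported path, i.e.\ matching the realization conventions of \cref{def:restricted-complexity}. I would handle it by citing \cref{thm:kron-relative-reachability} directly, so that no separate path argument is needed.
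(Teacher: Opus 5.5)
Your proposal is correct, and the direct path argument you give as a fallback is exactly the paper's proof. There, the AIRM geodesic from \(G_0\) to \(G_c\) stays in \(\Kron_{m,n}\) by total geodesicity, and congruence by \(H^{-1/2}\) carries it isometrically to an admissible relative path ending at \(S_c\in\Ck\); the closed form is then \cref{prop:kron-product-distance}. Your preferred shortcut through the set-distance identity of \cref{thm:kron-relative-reachability} is also valid and not circular, since that theorem's proof does not use this proposition and rests on the same totally-geodesic and isometry reasoning. Your \(K=e^{7/2}\) illustration from the \(2\times3\) example is correct and justifies the final one-way remark more concretely than the paper's purely logical comment.
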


Taking \(G_c=P_{\Kron}(H)\) gives a projection-based sufficient certificate:
one checks the Hessian-relative condition number of the projected Kronecker
metric and, if it is at most \(K\), obtains the path-length upper bound above.
Failure of this projected candidate only means that this particular candidate
does not reach the RDGC target; another Kronecker metric may still do so.

\begin{proposition}[Nonsmooth active \(K\)-target condition]
\label{prop:kron-kkt}
Assume \(K>1\). Let \(Q_\star=P_{\Kron_{m,n,K}}(M)\), and set
\[
  R_\star=\log(Q_\star^{-1/2}MQ_\star^{-1/2}),\qquad
  \omega(Q)=\log\kappa(Q).
\]
If \(\omega(Q_\star)<\log K\), then
\[
  \Pi_{\mathcal L}R_\star=0.
\]
If \(\omega(Q_\star)=\log K\), then there exist \(\mu\ge0\) and
\(W_\star\in\partial\omega(Q_\star)\) such that
\[
  \Pi_{\mathcal L}R_\star=\mu\Pi_{\mathcal L}W_\star.
\]
Here \(\partial\omega(Q_\star)\) denotes the subdifferential in the whitened
AIRM tangent coordinate
\(X\mapsto\omega(Q_\star^{1/2}\exp(X)Q_\star^{1/2})\) at \(X=0\).  It is the
convex hull of
\[
  uu^\top-vv^\top,\qquad
  u\in E_{\max},\ \norm{u}=1,\quad
  v\in E_{\min},\ \norm{v}=1,
\]
with \(E_{\max}\) and \(E_{\min}\) the extremal eigenspaces of \(Q_\star\).
For simple extremal eigenvalues this reduces to the usual multiplier equation
with \(W_\star=u_{\max}u_{\max}^\top-u_{\min}u_{\min}^\top\).
\end{proposition}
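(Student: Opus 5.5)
We treat this as a constrained convex program on the Hadamard manifold \(\Kron_{m,n}\). By \cref{thm:kron-ai-projection} the Kronecker family is a closed totally geodesic submanifold, so its whitened tangent space at any \(Q\in\Kron_{m,n}\) is \(\mathcal L_{m,n}\) (the exponential chart \(X\mapsto Q^{1/2}\exp(X)Q^{1/2}\), \(X\in\mathcal L_{m,n}\), parametrizes the family). The function \(f(G)=\tfrac12 d_{\AI}(G,M)^2\) is \(1\)-strongly geodesically convex by \cref{lem:hadamard-projection-facts}. The function \(\omega(G)=\log\kappa(G)\) is the log-spectral objective with \(\phi_{\mathrm{osc}}\) and \(H_a=I\), up to inversion, so it is geodesically convex by \cref{thm:structured-log-spectral-mother}. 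Hence \(\Kron_{m,n,K}\) is closed and geodesically convex (already recorded in \cref{thm:kron-k-target}), and \(Q_\star\) minimizes \(f\) over it.

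\textbf{Step 1: both functions in the exponential chart.} In the chart based at \(Q_\star\), the whitened Riemannian gradient of \(f\) at \(X=0\) is \(-R_\star\), the standard AIRM identity \(\operatorname{grad}f=-\log_{Q_\star}M\) in whitened form. Restricting to the family therefore gives \(-\Pi_{\mathcal L}R_\star\). For \(\omega\), we compute \(\omega(Q_\star^{1/2}e^{X}Q_\star^{1/2})\). This function is geodesically convex along the geodesics \(t\mapsto Q_\star^{1/2}e^{tX}Q_\star^{1/2}\), and its one-sided directional derivative at \(0\) is
\[
  \max_{u\in E_{\max},\,v\in E_{\min}} \bigl(u^\top Xu-v^\top Xv\bigr),
\]
by first-order perturbation of extreme eigenvalues (the sign-flipped analogue of \cref{lem:log-spectral-pullback}, applied to \(\phi_{\mathrm{osc}}\) evaluated at \(\log Q_\star\)). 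This directional derivative is the support function of
\[
  \operatorname{conv}\{uu^\top-vv^\top\},
\]
which identifies \(\partial\omega(Q_\star)\) exactly as stated. The restricted subdifferential is then its image under \(\Pi_{\mathcal L}\).

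\textbf{Step 2: case split.} If \(\omega(Q_\star)<\log K\), the constraint is inactive in a neighborhood of \(Q_\star\), by continuity of \(\kappa\). So \(Q_\star\) is a local, hence global, minimizer of \(f\) on \(\Kron_{m,n}\). The first-order condition gives \(\Pi_{\mathcal L}R_\star=0\); equivalently \(Q_\star=P_{\Kron}(M)\), consistent with \cref{thm:kron-ai-projection}.

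If \(\omega(Q_\star)=\log K\), we pull back to the chart. There the problem becomes: minimize \(\tilde f(X)\) subject to \(\tilde\omega(X)\le\log K\), for \(X\in\mathcal L_{m,n}\). Optimality means the directional derivative of \(\tilde f\) is nonnegative along every direction \(X\) with \(\tilde\omega'(0;X)<0\).

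To obtain a multiplier, we use a Slater point, which exists because \(K>1\): the identity has \(\omega(I)=0<\log K\). Since \(\omega\) is geodesically convex, the geodesic from \(Q_\star\) toward \(I\) gives a direction \(X_0\in\mathcal L\) with \(\tilde\omega'(0;X_0)\le -\log K<0\). This is the Slater-type condition in the tangent space.

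\textbf{Step 3: multiplier rule.} We apply the nonsmooth Fritz John/KKT theorem for convex directional derivatives. Because \(\tilde f\) is differentiable and \(\tilde\omega'(0;\cdot)\) is sublinear, the tangent optimality condition reads
\[
  0\in -\Pi_{\mathcal L}R_\star+\mu\,\Pi_{\mathcal L}\partial\omega(Q_\star),\qquad \mu\ge0.
\]
This follows from Gordan/separation applied to the compact convex set \(\Pi_{\mathcal L}\partial\omega(Q_\star)\). The Slater direction excludes the degenerate multiplier on \(f\). The sign then gives \(\Pi_{\mathcal L}R_\star=\mu\Pi_{\mathcal L}W_\star\) for some \(W_\star\in\partial\omega(Q_\star)\).

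\textbf{Main obstacle.} The main difficulty is linearization. Tangent-cone optimality has to be derived from geodesic rather than chart convexity. Step 3 needs \(\tilde f(tX)\) and \(\tilde\omega(tX)\) along the chart rays \(t\mapsto tX\) to coincide with the geodesics from \(Q_\star\). This holds because \(\Kron_{m,n}\) is totally geodesic, so chart rays are geodesics. As a result, directional derivatives and the feasible-direction cone are computed exactly, and no Clarke-regularity argument is required.

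When the extremal eigenvalues are simple, \(\partial\omega(Q_\star)\) is the singleton \(\{u_{\max}u_{\max}^\top-u_{\min}u_{\min}^\top\}\), which gives the stated reduction.
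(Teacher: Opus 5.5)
Your proposal is correct and follows essentially the same route as the paper: the whitened restricted gradient $-\Pi_{\mathcal L}R_\star$, the directional derivative of $\omega$ along $Q_\star^{1/2}\exp(tZ)Q_\star^{1/2}$ identified as the support function of $\operatorname{conv}\{uu^\top-vv^\top\}$, the identity as a Slater point because $K>1$, and the convex KKT rule on the totally geodesic Kronecker submanifold. Your separation argument, which uses the Slater direction $X_0=-\log Q_\star$ to rule out $0\in\Pi_{\mathcal L}\partial\omega(Q_\star)$ and hence the degenerate multiplier, spells out the KKT step that the paper invokes in a single line.
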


\begin{proposition}[Projection diagnostics for K-FAC-style metrics]
\label{prop:kfac-certificates}
Suppose an optimizer uses damped Kronecker metric states
\[
  G_t=(B_t+\lambda_BI_n)\otimes(A_t+\lambda_AI_m).
\]
Let \(H_t\succ0\) be a reference curvature and
\(P_t=P_{\Kron}(H_t)\). Then the quantities
\[
  M_t=d_{\AI}(H_t,P_t),\quad
  A_t=d_{\AI}(P_t,G_t),\quad
  E_t=d_{\AI}(H_t,G_t),\quad
  C_{K,t}=d_{\AI}(P_t,\Kron_{m,n,K})
\]
satisfy
\[
  E_t^2\ge M_t^2+A_t^2,
\]
and
\[
  (M_t^2+C_{K,t}^2)^{1/2}
  \le
  d_{\AI}(H_t,\Kron_{m,n,K})
  \le
  M_t+C_{K,t}.
\]
Here \(A_t\) and the discrete increment \(d_{\AI}(G_t,G_{t+1})\) have the
closed form of \cref{prop:kron-product-distance}, \(M_t\) is computed by
\cref{thm:kron-ai-projection}, and \(C_{K,t}\) is the convex QP in
\cref{thm:kron-k-target}.
These quantities diagnose distance to the Kronecker family, optimizer-state
lag inside that family, and distance to the auxiliary self-conditioned target.
They do not by themselves certify \(\kappa(G_t^{-1}H_t)\le K\). A full
preconditioning certificate must use the Hessian-relative target
\(\{G\in\Kron_{m,n}:\kappa(G^{-1}H_t)\le K\}\) and must specify how the
optimizer state, damping, bias correction, and discrete path realize an
admissible family trajectory.
\end{proposition}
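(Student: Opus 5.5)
The statement to prove is \cref{prop:kfac-certificates}. It consists of two inequalities, and I would obtain both from the Hadamard-projection facts already established. No new analysis is needed.

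\textbf{Step 1: $E_t^2\ge M_t^2+A_t^2$.} By \cref{thm:kron-ai-projection}, $\Kron_{m,n}$ is a closed, totally geodesic, hence geodesically convex, subset of the Hadamard manifold $(\SPD^{mn},d_{\AI})$. Also $P_t=P_{\Kron}(H_t)$ is its unique metric projection. I would apply the obtuse-angle (Pythagorean) inequality for projections onto closed convex sets in a CAT(0) space, exactly as in \cref{thm:restricted-geometrodynamic-mother}. Take the manifold to be the SPD cone, the family to be $\Kron_{m,n}$, and $\mathcal J\equiv0$ with $\tau=0$, so that $\mathcal C_\tau=\Kron_{m,n}$. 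Then for every $Y\in\Kron_{m,n}$,
\[
  d_{\AI}(H_t,Y)^2\ge d_{\AI}(H_t,P_t)^2+d_{\AI}(P_t,Y)^2 .
\]
The damped factors $A_t+\lambda_AI_m$ and $B_t+\lambda_BI_n$ are SPD, because I read the proposition as assuming this so that $G_t$ is a metric state. Hence $G_t\in\Kron_{m,n}$, and substituting $Y=G_t$ gives $E_t^2\ge M_t^2+A_t^2$.

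\textbf{Step 2: the sandwich for $d_{\AI}(H_t,\Kron_{m,n,K})$.} This is precisely the pair of inequalities in \cref{thm:kron-k-target} with $M=H_t$ and $G_\star=P_t$. The lower bound comes from the nested-projection Pythagorean inequality. Let $Q$ be the projection of $H_t$ onto the closed convex set $\Kron_{m,n,K}\subseteq\Kron_{m,n}$. Applying the inequality from Step 1 with $Y=Q$ gives
\[
  d(H_t,Q)^2\ge M_t^2+d(P_t,Q)^2\ge M_t^2+C_{K,t}^2 .
\]
The upper bound is the triangle inequality through $P_t$ and the projection of $P_t$ onto $\Kron_{m,n,K}$. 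Taking square roots gives the displayed sandwich.

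\textbf{Step 3: remaining assertions.} The closed forms follow from \cref{prop:kron-product-distance}, since $G_t,G_{t+1},P_t$ all lie in $\Kron_{m,n}$. $M_t=\fro{R_\star}$ by \cref{thm:kron-ai-projection}, and $C_{K,t}$ is the convex QP of \cref{thm:kron-k-target} applied to $G_0=P_t$. The disclaimer that these quantities do not certify $\kappa(G_t^{-1}H_t)\le K$ needs only a remark, because $\Kron_{m,n,K}$ constrains $\kappa(G)$ rather than $\kappa(G^{-1}H_t)$. The one point needing care, and the only mild obstacle, is confirming that the projection inequality is available for an arbitrary point of the convex set and not only at the projection. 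That is the standard CAT(0) variational characterization, already used in \cref{thm:restricted-geometrodynamic-mother}.
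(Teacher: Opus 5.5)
Your proposal is correct and is essentially the paper's proof. It uses the Hadamard Pythagorean inequality for projection onto $\Kron_{m,n}$ at $Y=G_t$, then \cref{thm:kron-k-target} with $M=H_t$ and $G_\star=P_t$, and reads the computability remarks off the cited results. One small fix is needed: \cref{thm:restricted-geometrodynamic-mother} requires the base point to lie in $\F$, and $H_t\notin\Kron_{m,n}$, so instantiate it with $\F=\SPD^{mn}$ and $\mathcal J$ the indicator of $\Kron_{m,n}$ rather than $\F=\Kron_{m,n}$ and $\mathcal J\equiv0$ (or just cite the standard CAT(0) projection inequality, which holds for any base point).
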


\subsection{Scoped low-rank extension}

Low-rank structures split into three different models:
\begin{enumerate}[leftmargin=*,itemsep=0.2em]
  \item additive metric families, \(G=D+UU^\top\), which are close to practical
  low-rank preconditioners;
  \item log-low-rank families, \(G=\exp(cI+L)\) with \(\rank(L)\le r\), which
  are cleaner for spectral theory;
  \item inverse Woodbury forms, useful for implementation cost.
\end{enumerate}
This paper uses low-rank as an extension layer. The result below is a spectral
surrogate for practical additive forms \(D+UU^\top\); their intrinsic geometry
requires a separate stratified analysis. The clean spectral model assumes a
shared eigenbasis and lets at most \(r\) log-eigenvalue coordinates move away
from an overall shift:
\[
  \mathcal Y_{K,r}(y)
  =
  \{z\in\Yk:\exists c\in\R,\ |\{i:z_i\ne y_i+c\}|\le r\}.
\]
The corresponding spectral low-rank complexity is
\[
  D_{K,r}^{\mathrm{spec}}(y)=\dist_{\ell^2}(y,\mathcal Y_{K,r}(y)).
\]

\begin{proposition}[Low-rank spectral monotonicity]
\label{prop:lowrank-monotonicity}
If \(r_1\le r_2\), then
\[
  D_{K,r_1}^{\mathrm{spec}}(y)
  \ge
  D_{K,r_2}^{\mathrm{spec}}(y)
  \ge
  D_K(y).
\]
When \(r\ge d\), the low-rank spectral complexity equals the full spectral
benchmark \(D_K(y)\).
\end{proposition}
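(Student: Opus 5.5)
Throughout, \(z=y+c\mathbf 1\) ranges over the shifted targets, and \(D_K(y)=\dist_{\ell^2}(y,\Yk)\) denotes the full spectral benchmark from \cref{thm:full-spd-benchmark}, which clips the log-spectrum into a window of width \(\log K\).

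The whole statement follows from inclusions of target sets, because a distance to a larger set is never larger. The plan is to prove
\[
  \mathcal Y_{K,r_1}(y)\subseteq\mathcal Y_{K,r_2}(y)\subseteq\Yk
  \qquad (r_1\le r_2),
\]
and then take \(\ell^2\)-distances from \(y\).

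\textbf{Step 1: the inclusions.} Let \(z\in\mathcal Y_{K,r_1}(y)\). By definition there is a shift \(c\) such that at most \(r_1\) coordinates satisfy \(z_i\ne y_i+c\). Since \(r_1\le r_2\), the same shift \(c\) witnesses at most \(r_2\) moved coordinates, so \(z\in\mathcal Y_{K,r_2}(y)\). The second inclusion \(\mathcal Y_{K,r}(y)\subseteq\Yk\) holds directly from the definition, which intersects with \(\Yk\).

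\textbf{Step 2: the monotone chain.} Since distance to a set is antitone under inclusion, Step 1 gives
\[
  D_{K,r_1}^{\mathrm{spec}}(y)\ge D_{K,r_2}^{\mathrm{spec}}(y)\ge D_K(y).
\]
Both sides may equal \(+\infty\) only on the left, since \(\Yk\) is nonempty. The inequalities involve only infima, so attainment is not needed here.

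\textbf{Step 3: the case \(r\ge d\).} It suffices to show \(\mathcal Y_{K,r}(y)=\Yk\). Take any \(z\in\Yk\) and any shift \(c\), for instance \(c=0\). The set \(\{i:z_i\ne y_i+c\}\) has at most \(d\le r\) elements, so \(z\in\mathcal Y_{K,r}(y)\). Hence the two sets coincide and \(D_{K,r}^{\mathrm{spec}}(y)=D_K(y)\).

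\textbf{Identification with \cref{thm:full-spd-benchmark}.} It remains to confirm that \(\dist_{\ell^2}(y,\Yk)\) is the clipped formula of that theorem. For fixed \(c\), the nearest point of \(\prod_i[c,c+\log K]\) is coordinatewise clipping. Moreover \(\Yk\) is the union over \(c\) of these boxes, since any \(z\in\Yk\) lies in the box with \(c=\min_i z_i\). Minimizing over \(c\) therefore reproduces the displayed formula.

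No step is a real obstacle; the only point needing care is this identification of \(D_K(y)\) with the benchmark notation.
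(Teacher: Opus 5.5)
Your proof is correct and follows the paper's argument. Both use the inclusions \(\mathcal Y_{K,r_1}(y)\subseteq\mathcal Y_{K,r_2}(y)\subseteq\Yk\), the fact that distance to a set can only decrease when the set grows, and the observation that for \(r\ge d\) the support restriction is vacuous, so \(\mathcal Y_{K,r}(y)=\Yk\). Your explicit check of the \(r\ge d\) case and the identification of \(\dist_{\ell^2}(y,\Yk)\) with the clipping formula of \cref{thm:full-spd-benchmark} are correct details that the paper leaves implicit. Your opening remark that \(z=y+c\mathbf 1\) ranges over the targets is inaccurate for \(r\ge1\), but the argument never uses it.
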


\begin{remark}[Role of low rank]
The proposition captures the intended hierarchy: more spectral correction
directions cannot hurt, and full rank recovers the unconstrained benchmark.
For practical additive forms \(D+UU^\top\), the same monotonicity should be
studied in a stratified metric space rather than in this shared-eigenbasis
spectral surrogate.
The proposition therefore supports the hierarchy of spectral correction
families under the stated shared-eigenbasis surrogate. Claims about a concrete
low-rank preconditioner additionally require its update parameterization,
damping convention, and admissible interpolation to be mapped into one of these
geometric realizations.
\end{remark}

\section{Robustness Interfaces and Certificate Verification}
\label{sec:verification}

\subsection{Loewner proxy robustness}

\begin{proposition}[Proxy inflation]
\label{prop:proxy-inflation}
If
\[
  (1-\varepsilon)H\preceq\widehat H\preceq(1+\varepsilon)H,
  \qquad 0\le\varepsilon<1,
\]
then every \(G\succ0\) satisfies
\[
  \kappa(G^{-1}H)
  \le
  \frac{1+\varepsilon}{1-\varepsilon}
  \kappa(G^{-1}\widehat H).
\]
Consequently, reaching the proxy threshold
\[
  \widehat K=K\frac{1-\varepsilon}{1+\varepsilon}
\]
is sufficient for the true threshold whenever \(\widehat K\ge1\).
\end{proposition}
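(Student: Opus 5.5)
The plan is a sandwich argument on the generalized eigenvalues of the pencil \(Hv=\lambda Gv\). Fix \(G\succ0\) and write \(\lambda_{\max}(G^{-1}H)\) and \(\lambda_{\min}(G^{-1}H)\) for the extreme eigenvalues. By Courant--Fischer these are the extreme values of the Rayleigh quotient
\[
  \frac{v^\top Hv}{v^\top Gv},
  \qquad v\neq0.
\]
The same description holds for \(\widehat H\) in place of \(H\).

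First I multiply the Loewner hypothesis by \(1/(v^\top Gv)>0\). This gives, for every \(v\neq0\),
\[
  (1-\varepsilon)\frac{v^\top Hv}{v^\top Gv}
  \le\frac{v^\top\widehat Hv}{v^\top Gv}
  \le(1+\varepsilon)\frac{v^\top Hv}{v^\top Gv}.
\]
Taking the maximum over \(v\) and using the right-hand inequality yields \((1-\varepsilon)\lambda_{\max}(G^{-1}H)\le\lambda_{\max}(G^{-1}\widehat H)\). This is where the left Loewner inequality is used, rearranged as \(H\preceq(1-\varepsilon)^{-1}\widehat H\). Taking the minimum over \(v\) gives \(\lambda_{\min}(G^{-1}H)\ge(1+\varepsilon)^{-1}\lambda_{\min}(G^{-1}\widehat H)\). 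The assumption \(\varepsilon<1\) guarantees \(1-\varepsilon>0\), so \(\widehat H\succ0\) and every quantity involved is positive.

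Dividing the two bounds gives
\[
  \kappa(G^{-1}H)
  =\frac{\lambda_{\max}(G^{-1}H)}{\lambda_{\min}(G^{-1}H)}
  \le\frac{1+\varepsilon}{1-\varepsilon}\,\kappa(G^{-1}\widehat H).
\]
For the consequence, suppose \(G\) satisfies \(\kappa(G^{-1}\widehat H)\le\widehat K=K(1-\varepsilon)/(1+\varepsilon)\). The inequality above then gives \(\kappa(G^{-1}H)\le K\). The condition \(\widehat K\ge1\) is needed only so that the proxy target is nonempty as a condition-number threshold.

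No step is a real obstacle. The one point needing care is to work with generalized eigenvalues through the Rayleigh quotient, or equivalently with the congruence \(G^{-1/2}(\cdot)G^{-1/2}\), which preserves Loewner order. One must not treat \(G^{-1}H\) as a symmetric matrix.
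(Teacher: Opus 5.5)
Your proposal is correct and follows essentially the same route as the paper: both divide the Loewner sandwich by \(v^\top Gv\) to compare the generalized Rayleigh quotients of \(H\) and \(\widehat H\), take maxima and minima over \(v\ne0\), and divide, after which the threshold consequence is immediate. One cosmetic slip: the bound \((1-\varepsilon)\lambda_{\max}(G^{-1}H)\le\lambda_{\max}(G^{-1}\widehat H)\) comes from the left inequality of your displayed chain, not the right one as you first say, and your next sentence in fact acknowledges this.
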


\begin{proposition}[High-probability proxy interface]
\label{prop:stoch-proxy-upper}
Suppose a random proxy obeys the displayed Loewner event with probability at
least \(1-\delta\).  If a structured candidate \(G\) satisfies
\(\kappa(G^{-1}\widehat H)\le\widehat K\) on that event, then
\(\kappa(G^{-1}H)\le K\) with probability at least \(1-\delta\).
\end{proposition}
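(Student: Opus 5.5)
This statement is the high-probability proxy interface, and it follows from \cref{prop:proxy-inflation} applied pointwise on the good event. Let \(\mathcal E\) be the event on which
\[
  (1-\varepsilon)H\preceq\widehat H\preceq(1+\varepsilon)H,
\]
so \(\Pr(\mathcal E)\ge1-\delta\). Throughout, \(\widehat K=K(1-\varepsilon)/(1+\varepsilon)\ge1\) is the proxy threshold from \cref{prop:proxy-inflation}. The structured candidate \(G\) may itself depend on the realization of \(\widehat H\), since it is typically computed from the proxy. The argument will therefore work realization by realization and never use independence between \(G\) and \(\widehat H\).

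First, fix an outcome \(\omega\in\mathcal E\). At this outcome \(\widehat H(\omega)\) satisfies the deterministic Loewner sandwich with the fixed \(H\). Since \(G(\omega)\succ0\), \cref{prop:proxy-inflation} gives
\[
  \kappa(G^{-1}H)\le\frac{1+\varepsilon}{1-\varepsilon}\kappa(G^{-1}\widehat H).
\]
By hypothesis \(\kappa(G^{-1}\widehat H)\le\widehat K\) on \(\mathcal E\), so
\[
  \kappa(G^{-1}H)\le\frac{1+\varepsilon}{1-\varepsilon}\,\widehat K=K.
\]
Second, this shows \(\mathcal E\subseteq\{\kappa(G^{-1}H)\le K\}\). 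Hence
\[
  \Pr\bigl(\kappa(G^{-1}H)\le K\bigr)\ge\Pr(\mathcal E)\ge1-\delta.
\]

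The only point requiring care is measurability. We need \(\{\kappa(G^{-1}H)\le K\}\) to be an event, or else we state the conclusion as an inner-probability bound. I would assume \(G\) is a measurable function of the proxy. Then \(\kappa(G^{-1}H)\) is a measurable function of the proxy, because generalized eigenvalues depend continuously on the matrices, and the set is an event. Apart from this, the proof is a direct transfer of the deterministic inflation bound through event inclusion.
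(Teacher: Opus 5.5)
Your proposal is correct and matches the paper's proof: it applies \cref{prop:proxy-inflation} pointwise on the Loewner event, combines it with \(\kappa(G^{-1}\widehat H)\le\widehat K\) to get \(\kappa(G^{-1}H)\le K\) there, and concludes from the event's probability. Your remarks that \(G\) may depend on \(\widehat H\) and that measurability is needed are harmless refinements the paper leaves implicit.
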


The second statement is an interface.  A finite-sample result requires a
concentration theorem for the chosen estimator \cite{tropp2015matrix}.

\subsection{Deterministic and randomized checks}

The accompanying code supplement contains deterministic and randomized
small-SPD checks for diagonal and block certificates, Kronecker spectral
mismatch, Hessian-relative candidates, projection residuals, and strict
separation.  The entry points are
\begin{itemize}[leftmargin=*,itemsep=0em,topsep=0.2em]
  \item \texttt{synthetic\_certificate\_benchmark.py},
  \item \texttt{random\_spd\_certificate\_suite.py},
  \item \texttt{kron\_certify\_demo.py}.
\end{itemize}
Failed identities raise assertions.  The
reference environment is Python 3.11.4 with NumPy 1.26.4, SciPy 1.10.1,
Matplotlib 3.7.1, and SymPy 1.11.1.

In the randomized suite, seeds \(0,\ldots,63\) are used.  The diagonal target
\(K=3\) is reachable in 32 instances, the block-Jacobi target \(K=3\) is
certified in 58, and the fixed-basis Kronecker spectral target \(K=3\) is
reachable in 57.  All 64 constructed Hessian-relative Kronecker positive
controls reach \(K=2\), while the identity constructed negative control fails
that target in all 64 cases.

The positive controls use
\[
  H=G^{1/2}CG^{1/2},
  \qquad
  \kappa(C)\in[1.1,1.8],
\]
with target \(K=2\).  The counts therefore measure theorem-check coverage in
the declared suite, not population success probabilities.

For an independent projection check, the noncommuting \(2\times2\) example is
also optimized by generic BFGS in a five-dimensional gauge-fixed log-factor
chart.  Three deterministic starts agree on the objective to less than
\(2\times10^{-15}\).  The partial-trace Armijo iterate stops at residual
\(8.257\times10^{-6}\); its objective exceeds the independent value by
\(3.227\times10^{-11}\), below the theorem's residual bound
\(3.409\times10^{-11}\), and the two matrices are AIRM distance
\(7.823\times10^{-6}\) apart.  BFGS uses gradient tolerance \(10^{-7}\);
Armijo uses initial step one, shrink factor \(1/2\), parameter \(10^{-4}\),
and residual tolerance \(10^{-4}\).  These calculations test the theorem
through a distinct parameterization and optimizer; they do not establish a
large-scale complexity claim.

The same deterministic script directly checks the marginal criterion in
\cref{thm:kron-nearest-versus-best,cor:kron-marginal-mismatch,%
cor:kron-two-by-two-completeness}.  For the exact \(2\times3\) separation,
the projection normal residual is zero and
\(\mu_{\mathrm{marg}}=0.816496580927726\).  Hence the predicted one-sided
descent slope is \(-\mu_{\mathrm{marg}}^2=-2/3\); a step of \(10^{-6}\)
gives the finite-difference slope \(-0.666666666759852\).  In a deterministic
\(2\times2\) Bell-canonical instance with projection log-condition \(2.2\),
the projection normal residual is zero and the computed marginal mismatch is
\(3.21\times10^{-16}\).  These two checks exercise respectively the strict
separation and the low-dimensional completeness mechanisms.

The deterministic sensitivity scan varies the diagonal coupling and target
\(K\), block coupling, fixed-basis Kronecker threshold, and low-rank spectral
budget.  It contains 86 finite and 49 unreachable diagonal grid entries; the
maximum block primal and dual numerical violations are
\(2.21\times10^{-16}\) and zero.  The low-rank reachable threshold decreases
monotonically from \(121.51\) at rank zero to \(1\) at full rank, as required
by \cref{prop:lowrank-monotonicity}.  These parameter scans expose certificate
branch changes and numerical residuals rather than estimating a data-generating
success probability.

\begin{figure}[htbp]
  \centering
  \includegraphics[width=\linewidth]{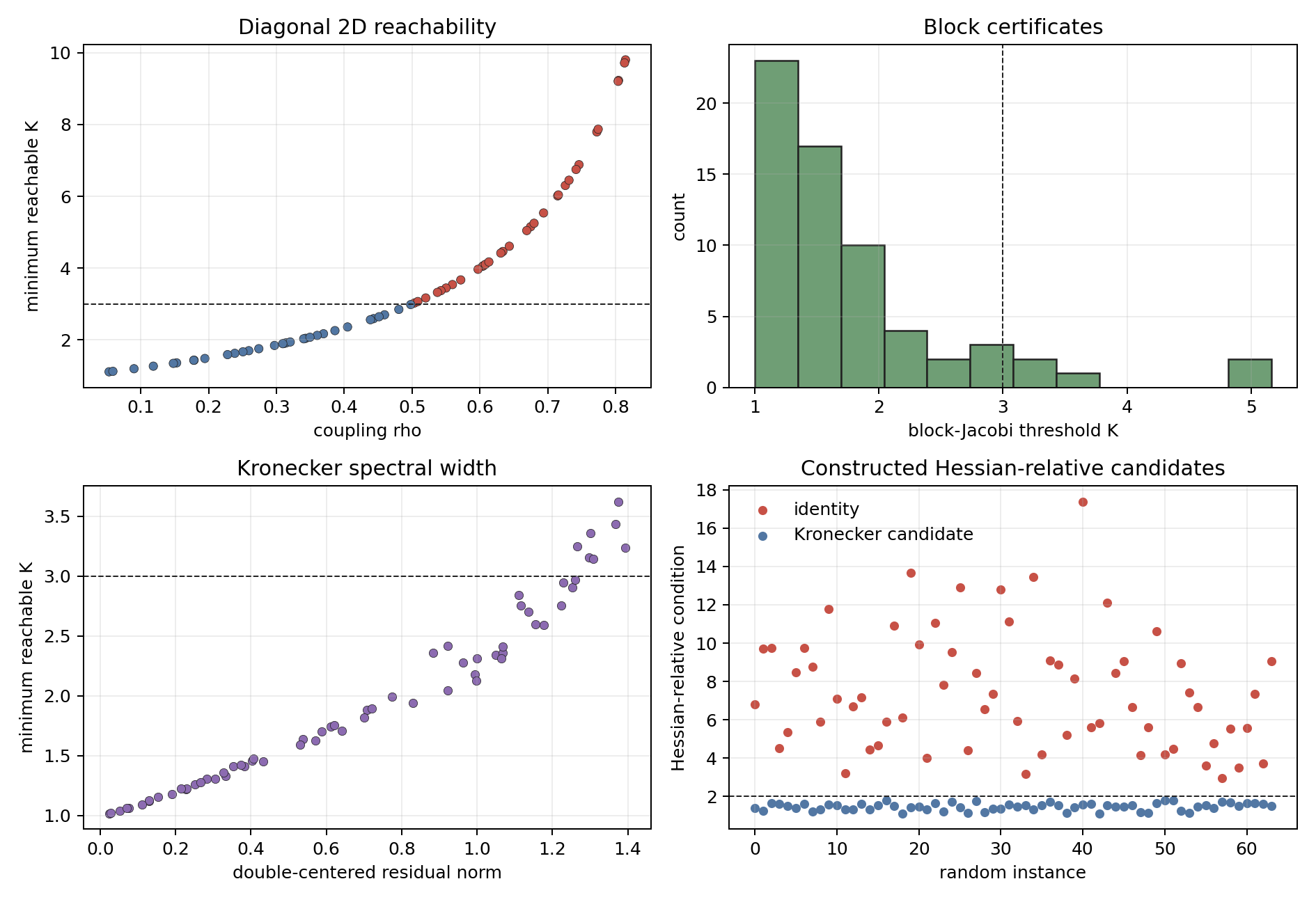}
  \caption{Randomized small-SPD certificate checks over 64 seeds.  The panels
  record diagonal reachability, block-Jacobi thresholds, fixed-basis
  Kronecker spectral width, and constructed Hessian-relative Kronecker
  candidates.  The identity is a constructed negative control.  These are
  theorem checks, not optimizer comparisons.}
  \label{fig:random-spd-suite}
\end{figure}

\FloatBarrier

\section{Discussion}
\label{sec:discussion}

The matrix results separate expressivity, approximation, and
preconditioning quality.  LMI feasibility determines whether a diagonal or
block family can reach a condition target.  Affine-invariant projection
determines the nearest structured geometry.  Condition-number minimization
selects the best preconditioner.  These operations agree in special regimes
and separate in general.

For Kronecker matrices, the factor scale gauge is part of the geometry rather
than a numerical nuisance.  Quotienting the gauge gives a complete totally
geodesic manifold, a closed-form intrinsic line element, and a unique AIRM
projection.  The partial-trace normal equations expose the projection
residual, while the extreme-state marginal condition determines whether that
projection is also condition-optimal.  The marginal-mismatch residual gives a
computable first-order measure of failure.  Every two-level relative spectrum
forces projection optimality.  Every \(2\times2\) Kronecker projection is also
condition-optimal, making the explicit \(2\times3\) separation
dimension-minimal.  Thus the universal guarantee does not extend to
unrestricted richer spectra in the next possible factor dimension.

At the tangent-space level, the same orthogonality calculation extends to a
product of \(k\) SPD factors.  In whitened coordinates its tangent log space
is the sum of the \(k\)
single-mode terms
\[
  I\otimes\cdots\otimes I\otimes X_j\otimes I\otimes\cdots\otimes I.
\]
Repeating the two-factor normal-space proof shows that the projection equations
require every one-mode partial trace of the log residual to vanish, and that
projection optimality is equivalent to choosing admissible density matrices
in the two extreme eigenspaces with matching marginals on every mode.
Fixed-basis dual formulas
and dimension-minimal separation for three or more factors require separate
analysis; the two-factor classification proved here does not presume either.

The exact penalty and proximal contraction provide a global variational
baseline; their rate is conditional on solving each proximal subproblem.
Likewise, the Armijo theorem gives a convergent projection solver but does not
claim that a dense Kronecker projection is inexpensive at neural-network
scale.  Practical implementations require matrix-free partial traces,
structured eigensolvers, and estimator-specific concentration bounds.

The strongest statements are deterministic finite-dimensional matrix
theorems.  Time-dependent curvature, damping, and optimizer states can be
inserted only after a concrete algorithm-to-geometry realization is specified.
The Loewner proxy results state precisely what must be supplied by such a
statistical layer.

The analytic certificates are exact-arithmetic statements.  The interval-safe
form remains rigorous when distance, residual, and condition number are
enclosed by validated numerical bounds and the derived comparisons are
evaluated with outward rounding.  Ordinary floating-point residuals
without such enclosures remain diagnostics; near multiple extreme eigenvalues,
the marginal condition should be solved over the full extreme eigenspaces
rather than from a single numerically selected eigenvector.

In summary, structured preconditioning has three distinct certificates:
reachability of a target, proximity to a full geometry, and
condition-optimality inside the family.  Affine-invariant matrix geometry
places all three in one framework and makes their coincidence and separation
mathematically explicit.

\bibliographystyle{siamplain}
\bibliography{references}

\appendix
\section{Proofs for Spectral Targets and Structured Families}
\label{app:matrix-proofs}

\begin{proof}[Proof of \cref{thm:full-spd-benchmark}]
Let \(S_1\in\Ck\), and let \(z_1\le\cdots\le z_d\) be the ordered
log-eigenvalues of \(S_1\). The affine-invariant distance satisfies the
spectral lower bound
\[
  d_{\AI}(S_0,S_1)\ge \norm{y-z}_2,
\]
where \(y\) is the ordered log-spectrum of \(S_0\). This is the standard
eigenvalue majorization lower bound for the affine-invariant metric on
positive-definite matrices \cite{bhatia2007positive}. Since \(S_1\in\Ck\), the
vector \(z\) has width at most \(\log K\), so \(z_i\in[c,c+\log K]\) for some
\(c\). Hence
\[
  d_{\AI}(S_0,S_1)^2
  \ge
  \sum_{i=1}^d \dist(y_i,[c,c+\log K])^2.
\]
Taking the infimum over \(S_1\in\Ck\) gives the lower bound.

For the reverse inequality, fix \(c\) and define
\[
  z_i(c)=\Pi_{[c,c+\log K]}(y_i),
\]
the scalar projection of \(y_i\) onto the interval. The vector \(z(c)\) has
width at most \(\log K\). Choose \(S_1\) with the same eigenvectors as \(S_0\)
and log-eigenvalues \(z_i(c)\). Then \(S_1\in\Ck\) and the commuting
affine-invariant distance is exactly \(\norm{y-z(c)}_2\). Minimizing over
\(c\in\R\) proves the formula.
\end{proof}

\begin{proof}[Proof of \cref{prop:monotonicity}]
Under the stated compatibility assumption, every admissible \(\F_1\)-path from
the common initial point \(S_0\) to the \(\F_1\)-target is also an admissible
\(\F_2\)-path with the same length. Moreover
\(\mathcal C_{K,\F_1}(H)\subseteq\mathcal C_{K,\F_2}(H)\). Thus the feasible
path set for \(\F_2\) contains the feasible path set for \(\F_1\), and the
infimum cannot increase. Enlarging the target threshold \(K\) similarly
enlarges the feasible endpoint set. The ambient SPD cone contains every
admissible restricted path considered in the lower-bound statement, giving
\(D_{K,\F}\ge D_K\).
\end{proof}

\begin{proof}[Proof of \cref{prop:submanifold-distance}]
Under the smooth embedded submanifold assumption, admissible paths are exactly
absolutely continuous paths in \(\mathcal S_\F(H)\) from \(S_0\) to
\(\mathcal C_{K,\F}(H)\). The induced length is the ambient affine-invariant
length restricted to that submanifold. Hence the infimum is the intrinsic
distance to the target. If no path connects the component of \(S_0\) to the
target, the intrinsic distance is \(+\infty\).
\end{proof}

\begin{proof}[Proof of \cref{thm:scale-closed-threshold}]
We first record the sharp vector inequality used below. For
\(x\in\R^d\), write
\[
  \operatorname{osc}(x)=\max_i x_i-\min_i x_i,\qquad
  \dist(x,\R\mathbf 1)=\min_{s\in\R}\norm{x-s\mathbf 1}_2.
\]
Let \(\tau=\operatorname{osc}(x)\). Translating \(x\) by a scalar multiple of
\(\mathbf 1\) changes neither side, so assume \(\min_i x_i=0\) and
\(\max_i x_i=\tau\). Then \(0\le x_i\le\tau\). The squared distance to
\(\R\mathbf 1\) is \(\sum_i(x_i-\bar x)^2\), which is the variance numerator.
Among vectors in the box \([0,\tau]^d\), this convex function is maximized at a
vertex. If \(k\) coordinates equal \(\tau\) and \(d-k\) equal \(0\), then
\[
  \sum_i(x_i-\bar x)^2=\frac{k(d-k)}{d}\tau^2
  \le\frac{\lfloor d^2/4\rfloor}{d}\tau^2.
\]
Thus
\[
  \operatorname{osc}(x)
  \ge
  \sqrt{\frac{d}{\lfloor d^2/4\rfloor}}
  \dist(x,\R\mathbf 1).
\]
The constant is attained by placing \(\lfloor d/2\rfloor\) coordinates at one
endpoint and the remaining coordinates at the other endpoint.

Now fix \(Q\in\F\) and let
\[
  \lambda_i(Q^{-1/2}HQ^{-1/2})=\exp(\ell_i).
\]
Then
\[
  \log\kappa(Q^{-1}H)=\operatorname{osc}(\ell).
\]
Because \(\F\) is scale closed, \(cQ\in\F\) for every \(c>0\). The eigenvalues
of \(H^{-1/2}(cQ)H^{-1/2}\) are \(\exp(\log c-\ell_i)\), so
\[
  \inf_{c>0}d_{\AI}(H,cQ)
  =
  \dist(\ell,\R\mathbf 1).
\]
The vector inequality gives
\[
  \log\kappa(Q^{-1}H)
  \ge
  \alpha_d\inf_{c>0}d_{\AI}(H,cQ)
  \ge
  \alpha_d d_{\AI}(H,\F).
\]
Taking the infimum over \(Q\in\F\) proves the lower bound. Sharpness follows
from the scale family \(\F=\{cI:c>0\}\) and any equality vector in the sharp
oscillation-distance inequality, used as the log-spectrum of \(H\).
\end{proof}

\begin{proof}[Proof of
\cref{thm:structured-log-spectral-mother,thm:restricted-geometrodynamic-mother}]
We first prove the log-majorization statement that drives the theorem. For
\(A,B\in\SPD^d\), \(t\in[0,1]\), and \(1\le k\le d\), the exterior-power
representation commutes with the weighted geometric mean:
\[
  \bigwedge^k(A\#_tB)
  =
  \left(\bigwedge^kA\right)\#_t
  \left(\bigwedge^kB\right).
\]
For positive-definite \(X,Y\), monotonicity of the geometric mean gives
\[
  \norm{X\#_tY}_{\mathrm{op}}
  \le
  \norm{X}_{\mathrm{op}}^{1-t}
  \norm{Y}_{\mathrm{op}}^t,
\]
because \(X\preceq\norm{X}_{\mathrm{op}}I\) and
\(Y\preceq\norm{Y}_{\mathrm{op}}I\). Applying this inequality to the
exterior powers yields
\[
  \prod_{i=1}^k\lambda_i^\downarrow(A\#_tB)
  \le
  \prod_{i=1}^k
  \lambda_i^\downarrow(A)^{1-t}
  \lambda_i^\downarrow(B)^t.
\]
Equality holds for \(k=d\) because
\(\det(A\#_tB)=\det(A)^{1-t}\det(B)^t\). Taking logarithms proves
\begin{equation}
  \log\lambda(A\#_tB)
  \prec
  (1-t)\log\lambda(A)+t\log\lambda(B),
  \tag{A.1}
\end{equation}
up to the irrelevant common choice of decreasing order.

Fix an index \(a\), abbreviate \(H=H_a\) and \(\phi=\phi_a\), and set
\[
  s_H(G)=\log\lambda(G^{-1/2}HG^{-1/2}).
\]
Now fix
\(G_0,G_1\in\F\) and let \(G_t=G_0\#_tG_1\). Congruence by
\(H^{-1/2}\) is an affine-invariant isometry and preserves geometric means, so
with \(A_i=H^{-1/2}G_iH^{-1/2}\) we have
\[
  H^{-1/2}G_tH^{-1/2}=A_0\#_tA_1.
\]
The eigenvalues of \(G^{-1/2}HG^{-1/2}\) are the reciprocals of those of
\(H^{-1/2}GH^{-1/2}\). Majorization is preserved by multiplication by
\(-1\), and a convex permutation-invariant function is Schur convex.
Therefore (A.1) and ordinary convexity of \(\phi\) give
\[
\begin{aligned}
  \Omega_{\phi,H}(G_t)
  &\le
  \phi\!\left((1-t)s_H(G_0)+t s_H(G_1)\right)\\
  &\le
  (1-t)\Omega_{\phi,H}(G_0)+t\Omega_{\phi,H}(G_1).
\end{aligned}
\]
This proves geodesic convexity of each component \(\Omega_a\). Since
\(\rho\) is coordinatewise nondecreasing,
\[
  \rho(\Omega_1(G_t),\ldots,\Omega_q(G_t))
  \le
  \rho((1-t)\Omega(G_0)+t\Omega(G_1)),
\]
and convexity of \(\rho\) proves geodesic convexity of \(\mathcal J\).
All functions in the statement are finite convex functions on Euclidean
spaces and hence continuous, so \(\mathcal J\) is closed.

If \(\phi_a(x+c\mathbf1)=\phi_a(x)\), then
\[
  \log\lambda((cG)^{-1/2}H_a(cG)^{-1/2})
  =s_{H_a}(G)-(\log c)\mathbf1,
\]
which proves scale invariance. For stability, the standard affine-invariant
spectral inequality gives
\[
  \norm{s_{H_a}(G)-s_{\widehat H_a}(G)}_2
  \le
  d_{\AI}(H_a,\widehat H_a),
\]
because congruence by \(G^{-1/2}\) is an isometry. The Lipschitz bounds for
\(\phi_a\) and \(\rho\), followed by Cauchy--Schwarz, give the displayed
uniform perturbation estimate.

  We now prove the static projection theorem.  For quadratic kinetic action,
  \(U=0\) and
\(\Gamma=\iota_{\mathcal C_\tau}\), every curve over the
remaining horizon \(h=T-t\) satisfies
\[
  \frac12\int_t^T\norm{\dot\gamma}^2\dd s
  \ge\frac{\Len(\gamma)^2}{2h}
  \ge\frac{d(G,\mathcal C_\tau)^2}{2h}.
\]
Equality is attained by the constant-speed geodesic to the metric projection,
which proves the closed form for \(V\).

A sublevel set of a closed geodesically convex function on a closed
geodesically convex family is closed
and geodesically convex. In a finite-dimensional Hadamard manifold it has a
unique metric projection: existence follows by restricting a minimizing
sequence to a closed bounded ball and applying Hopf--Rinow, while uniqueness
follows from strict geodesic convexity of squared distance. The geodesic from
\(G_0\) to the projection stays in \(\F\) and reaches the target with length
equal to the metric distance. Every admissible curve has length at least the
distance between its endpoints, proving the least-action identity and the
uniqueness of the constant-speed minimizer. The displayed Pythagorean
inequality is the standard projection inequality in a Hadamard manifold.
Family and threshold monotonicity follow from inclusion of the corresponding
target and path sets. Finally,
\[
  |\widehat{\mathcal J}-\mathcal J|\le\varepsilon
\]
immediately gives the two target inclusions; applying target monotonicity gives
the perturbation bracket for the dynamic cost.

It remains to prove exactness and convergence of the algorithm. Let
\(P=P_\tau\). Since \(G_s\) is feasible,
\(d(G_0,P)\le R\), so \(P\in\mathcal D\). Closed balls in a Hadamard
manifold are geodesically convex, hence \(\mathcal D\) is closed and
geodesically convex. Write
\[
  f(G)=\frac12d(G_0,G)^2,
  \qquad h(G)=\mathcal J(G)-\tau.
\]
Take any infeasible \(G\in\mathcal D\), set \(a=h(G)>0\), and follow the
geodesic from \(G\) to \(G_s\). At
\[
  t=\frac{a}{a+\sigma}
\]
the point \(Y=G\#_tG_s\) is feasible, because geodesic convexity gives
\(h(Y)\le(1-t)a-t\sigma=0\). Both endpoints lie in the radius-\(R\) ball, so
\[
  d(G,Y)
  =t\,d(G,G_s)
  \le\frac{2Ra}{a+\sigma}
  \le\frac{2Ra}{\sigma}.
\]
On this ball the function \(f\) is \(R\)-Lipschitz, since
\[
  |f(X)-f(Y)|
  \le
  \frac{d(G_0,X)+d(G_0,Y)}{2}d(X,Y)
  \le R d(X,Y).
\]
Consequently
\[
  \Phi_\Lambda(G)
  =f(G)+\Lambda a
  \ge
  f(Y)+\left(\Lambda-\frac{2R^2}{\sigma}\right)a
  >f(Y)\ge f(P)=\Phi_\Lambda(P).
\]
No infeasible point minimizes \(\Phi_\Lambda\). On the feasible set the
penalty vanishes and \(P\) uniquely minimizes \(f\), proving exactness.

The function \(f\) is \(1\)-strongly geodesically convex on a Hadamard
manifold, while \(\pospart{h}=\max\{h,0\}\) is geodesically convex. Thus
\(\Phi_\Lambda\) is \(1\)-strongly geodesically convex on \(\mathcal D\).
Each proximal objective is proper, closed, and
\((1+\eta^{-1})\)-strongly geodesically convex, so it has a unique minimizer
\(G_{r+1}\in\mathcal D\). Strong convexity at the two minimizers
\(P=\argminop_{\mathcal D}\Phi_\Lambda\) and \(G_{r+1}\) gives
\[
\begin{aligned}
  &\Phi_\Lambda(P)+\frac{1}{2\eta}d(G_r,P)^2\\
  &\quad\ge
  \Phi_\Lambda(G_{r+1})
  +\frac{1}{2\eta}d(G_r,G_{r+1})^2
  +\frac{1+\eta^{-1}}{2}d(G_{r+1},P)^2\\
  &\quad\ge
  \Phi_\Lambda(P)
  +\frac12d(G_{r+1},P)^2
  +\frac{1}{2\eta}d(G_r,G_{r+1})^2
  +\frac{1+\eta^{-1}}{2}d(G_{r+1},P)^2.
\end{aligned}
\]
After cancellation and multiplication by \(2\eta\),
\[
  d(G_r,P)^2
  \ge
  (1+2\eta)d(G_{r+1},P)^2+d(G_r,G_{r+1})^2.
\]
Dropping the last term and iterating proves the stated global contraction.
\end{proof}

\begin{proof}[Proof of \cref{lem:log-spectral-pullback}]
Put \(Y=\log S\) and
\[
  S_t=e^{-tX/2}Se^{-tX/2},
  \qquad
  \dot S_0=-\frac12(XS+SX).
\]
The matrix logarithm is Fr\'echet differentiable on the SPD cone, so
\[
  \frac{\dd}{\dd t}\bigg|_{t=0}\log S_t
  =D\log_S[\dot S_0].
\]
For the finite convex spectral function \(F\), convex directional calculus
gives
\[
  F'(Y;D\log_S[\dot S_0])
  =
  \max_{Z\in\partial F(Y)}
  \tr\!\left(ZD\log_S[\dot S_0]\right).
\]
The spectral subdifferential theorem \cite{lewis1996convex} implies that every
\(Z\in\partial F(Y)\) commutes with \(Y\), hence also with \(S=e^Y\).
The Fr\'echet derivative \(D\log_S\) is self-adjoint in the Frobenius inner
product, and commutation gives
\[
  D\log_S[Z]=S^{-1}Z.
\]
Therefore
\[
\begin{aligned}
  \tr\!\left(ZD\log_S[\dot S_0]\right)
  &=\tr\!\left(S^{-1}Z\dot S_0\right)\\
  &=-\frac12\tr\!\left(S^{-1}ZXS+S^{-1}ZSX\right)\\
  &=-\tr(ZX).
\end{aligned}
\]
Substitution into the support-function formula proves the directional identity.
Since a closed convex subdifferential is determined by its support function,
the whitened AIRM subdifferential is exactly \(-\partial F(Y)\), with no
simplicity assumption on the spectrum.
\end{proof}

\begin{proof}[Proof of \cref{thm:normal-response-law}]
Represent a tangent direction at \(G\) in whitened coordinates by
\(X\in\widehat T_G\F\), so the corresponding geodesic is
\(G_t=G^{1/2}\exp(tX)G^{1/2}\).  The generalized eigenvalues of
\((H_a,G_t)\) are the eigenvalues of
\[
  \exp(-tX/2)S_a\exp(-tX/2).
\]
Applying \cref{lem:log-spectral-pullback} gives, at simple or repeated spectra,
\[
  \partial\Omega_a(G)=-\mathfrak Z_a(G)
\]
in whitened AIRM coordinates.

The finite convex composition rule for the coordinatewise nondecreasing
aggregator \(\rho\) now yields
\[
  \partial\mathcal J(G)
  =
  \left\{-\sum_{a=1}^q\theta_aZ_a:
  \theta\in\partial\rho(\Omega(G)),\
  Z_a\in\mathfrak Z_a(G)\right\}
\]
in whitened coordinates.  On the assumed closed geodesically convex smooth
submanifold, the geodesic subgradient inequality shows that an ambient
subgradient orthogonal to the tangent space is globally sufficient.  The
constrained Fermat rule gives the converse.  Since a normal space is a linear
space, the minus sign is immaterial, proving the aggregate normal-response
criterion.

For \(P=P_\F(H)\), the whitened gradient at \(P\) of the squared-distance
objective is
\[
  -\log(P^{-1/2}HP^{-1/2}).
\]
Projection optimality therefore gives
\(\log S\in\widehat N_P\F\). Applying the first part with \(q=1\) proves the
nearest-versus-best coincidence criterion.
\end{proof}

\begin{proof}[Proof of \cref{lem:diag-distance-identity}]
Congruence by \(H^{-1/2}\) is an isometry for \(d_{\AI}\), so it is enough to
compute distances between diagonal matrices. For
\(D(x)=\diag(e^{x_1},\ldots,e^{x_d})\),
\[
  d_{\AI}(D(x),D(y))
  =
  \fro{\log(D(x)^{-1/2}D(y)D(x)^{-1/2})}
  =
  \norm{x-y}_2,
\]
because the matrices commute. The same calculation applied to absolutely
continuous paths gives the Euclidean line element in \(x\)-coordinates. The
target \(\mathcal X_K(H)\) is the preimage of the closed set
\(\Ck\subset\SPD^d\) under the continuous map
\(x\mapsto H^{-1/2}D(x)H^{-1/2}\), hence is closed. Therefore the restricted
complexity is exactly the Euclidean distance from \(x_0\) to this closed target
set, with the usual convention that the distance to the empty set is
\(+\infty\).
\end{proof}

\begin{proof}[Proof of \cref{thm:diag-lmi}]
Suppose \(\kappa(D^{-1}H)\le K\). Let
\[
  \lambda_{\min}=\min_{v\ne0}\frac{v^\top Hv}{v^\top Dv}.
\]
Then all generalized Rayleigh quotients lie in
\([\lambda_{\min},K\lambda_{\min}]\). With \(E=\lambda_{\min}D\),
\[
  E\preceq H\preceq K E.
\]
Conversely, if \(E\preceq H\preceq KE\) for a positive diagonal \(E\), then
every generalized Rayleigh quotient \(v^\top Hv/v^\top Ev\) lies in
\([1,K]\), so \(\kappa(E^{-1}H)\le K\).
\end{proof}

\begin{proof}[Proof of \cref{cor:diag-aligned}]
If \(H\) and \(G_0\) are diagonal, then \(S_0=H^{-1/2}G_0H^{-1/2}\) is diagonal.
The full SPD projection onto \(\Ck\) keeps the eigenvectors of \(S_0\) and
clips only its log-spectrum. Therefore the optimal endpoint and the
affine-invariant geodesic from \(S_0\) to that endpoint remain diagonal. The
diagonal family contains a full SPD shortest path, and \cref{prop:monotonicity}
gives equality.
\end{proof}

\begin{proof}[Proof of \cref{thm:block-lmi}]
The proof is identical to \cref{thm:diag-lmi}, replacing the diagonal cone by
the block-diagonal cone. If \(\kappa(G^{-1}H)\le K\), take
\(E=\lambda_{\min}G\), which is block diagonal. The converse follows from the
generalized Rayleigh quotient bound.
\end{proof}

\begin{proof}[Proof of \cref{prop:structured-dual-certificate}]
Assume, for contradiction, that there exists \(E\in\mathcal L\) with
\[
  H-E\succeq0,\qquad KE-H\succeq0.
\]
For \(P,Q\succeq0\),
\[
  0
  \le
  \ip{P}{H-E}+\ip{Q}{KE-H}
  =
  \ip{P-Q}{H}+\ip{-P+KQ}{E}.
\]
Since \(E\in\mathcal L\) and \(\Pi_{\mathcal L}(P-KQ)=0\), the last term is
zero. Thus any feasible \(E\) would imply \(\ip{P-Q}{H}\ge0\), contradicting
the strict inequality in the certificate.
\end{proof}

\begin{proof}[Proof of \cref{cor:sign-flip-certificate}]
Let \(q\) be a unit eigenvector with
\[
  q^\top(KTHT-H)q<0.
\]
Set
\[
  Q=qq^\top,\qquad P=K(Tq)(Tq)^\top=KTQT.
\]
Then \(P,Q\succeq0\). Because \(T\) is a constant sign on each block,
\(\Pi_{\mathcal L}(TQT-Q)=0\), and hence
\[
  \Pi_{\mathcal L}(P-KQ)=K\Pi_{\mathcal L}(TQT-Q)=0.
\]
Moreover,
\[
  \ip{P-Q}{H}
  =
  q^\top(KTHT-H)q
  <0.
\]
\Cref{prop:structured-dual-certificate} proves infeasibility.
\end{proof}

\begin{proof}[Proof of \cref{prop:block-jacobi}]
The block Jacobi matrix \(G_{\mathrm{BJ}}\) is feasible in the block family, so
the best block condition number is at most the one it achieves. If
\(\opnorm{\widetilde H-I}\le\delta<1\), then
\[
  (1-\delta)I\preceq \widetilde H\preceq (1+\delta)I,
\]
which gives the stated condition-number bound.
\end{proof}

\begin{proof}[Proof of \cref{prop:kron-spec-lower}]
The restricted target
\[
  (Y+\mathcal A_{\mathrm{kron}})\cap\Yk
\]
is a subset of \(\Yk\). Distance to a subset is at least distance to the full
set. If the full projection lies in the restricted target, it attains both
distances.
\end{proof}

\begin{proof}[Proof of \cref{prop:kron-line-element}]
For \(G=B\otimes A\),
\[
  G^{-1/2}\dot G\,G^{-1/2}
  =
  Y\otimes I_m+I_n\otimes X.
\]
Its Frobenius norm squared is
\[
  m\fro{Y}^2+n\fro{X}^2+2\tr(X)\tr(Y),
\]
which is the displayed formula. For \(X=\alpha I_m\) and \(Y=-\alpha I_n\), the
three terms sum to zero.
\end{proof}

\begin{proof}[Proof of \cref{prop:kron-product-distance}]
Using Kronecker identities,
\[
  G_0^{-1/2}G_1G_0^{-1/2}
  =
  (B_0^{-1/2}B_1B_0^{-1/2})
  \otimes
  (A_0^{-1/2}A_1A_0^{-1/2}).
\]
Taking the logarithm gives
\[
  \log(G_0^{-1/2}G_1G_0^{-1/2})
  =
  L_B\otimes I_m+I_n\otimes L_A.
\]
The squared Frobenius norm of this matrix is
\[
  m\fro{L_B}^2+n\fro{L_A}^2+2\tr(L_A)\tr(L_B),
\]
which is the claimed formula.
\end{proof}

\begin{proof}[Proof of \cref{prop:kron-fixed-basis}]
In the fixed Kronecker eigenbasis, the eigenvalues of \(X_t\) and \(Y_t\) are
\(\dot\alpha_i\) and \(\dot\beta_j\). By \cref{prop:kron-line-element},
\[
  \norm{\dot G_t}_{G_t,\AI}^2
  =
  n\sum_i\dot\alpha_i^2+
  m\sum_j\dot\beta_j^2+
  2\left(\sum_i\dot\alpha_i\right)\left(\sum_j\dot\beta_j\right).
\]
On the other hand,
\[
  \fro{\dot Z_t}^2
  =
  \sum_{i,j}(\dot\alpha_i+\dot\beta_j)^2,
\]
which expands to the same expression.
\end{proof}

\begin{proof}[Proof of \cref{thm:kron-log-projection}]
If \(M=B\otimes A\), then the spectral calculus for Kronecker products gives
\[
  \log M=(\log B)\otimes I_m+I_n\otimes(\log A)\in\mathcal L_{m,n}.
\]
Conversely, if
\[
  \log M=Y\otimes I_m+I_n\otimes X,
\]
then the two summands commute, and therefore
\[
  M=\exp(Y\otimes I_m)\exp(I_n\otimes X)=\exp(Y)\otimes\exp(X).
\]
This proves the log characterization.

For the projection formula, decompose
\[
  \mathcal L_{m,n}
  =
  \operatorname{span}\{I_{mn}\}
  \oplus
  \{I_n\otimes X:\tr X=0\}
  \oplus
  \{Y\otimes I_m:\tr Y=0\},
\]
an orthogonal direct sum under the Frobenius inner product. With
\[
  P=\tau I_{mn}+I_n\otimes X_0+Y_0\otimes I_m,
\]
the definitions of \(\tau,X_0,Y_0\) give
\[
  \tr P=\tr L,\qquad
  \operatorname{Tr}_B(P)=\operatorname{Tr}_B(L),\qquad
  \operatorname{Tr}_A(P)=\operatorname{Tr}_A(L).
\]
Thus \(L-P\) is orthogonal to each component of \(\mathcal L_{m,n}\), so
\(P=\Pi_{\mathcal L}L\). The residual statement follows by applying the log
characterization to \(L=\log M\).
\end{proof}

\begin{proof}[Proof of \cref{lem:hadamard-projection-facts}]
Closed geodesically convex subsets of a Hadamard manifold are proximinal and
the projection is unique because squared distance is strictly convex along
geodesics. The same CAT(0) convexity inequality gives, for any geodesic
\(\gamma\) in \(C\),
\[
  f(\gamma_t)
  \le
  (1-t)f(\gamma_0)+tf(\gamma_1)
  -\frac12t(1-t)d(\gamma_0,\gamma_1)^2,
\]
which is \(1\)-strong geodesic convexity of \(f(y)=d(y,x)^2/2\).

For the line-search statement, the descent curve is a geodesic initialized in
the negative restricted-gradient direction. On a compact geodesic neighborhood
where the restricted gradient is \(L\)-Lipschitz, the geodesic descent lemma
gives
\[
  f(\gamma_\eta)
  \le
  f(\gamma_0)-\eta\norm{\operatorname{grad}_C f(\gamma_0)}^2
  +\frac{L}{2}\eta^2\norm{\operatorname{grad}_C f(\gamma_0)}^2.
\]
Thus all sufficiently small positive \(\eta\) satisfy the Armijo inequality.
Compactness makes the admissible upper step size uniform over the sublevel set,
and geometric backtracking then accepts a step bounded below by a positive
constant depending only on the Lipschitz constant and the backtracking
parameters.
\end{proof}

\begin{proof}[Proof of \cref{thm:kron-ai-projection}]
By \cref{thm:kron-log-projection},
\(\Kron_{m,n}=\exp(\mathcal L_{m,n})\). The principal matrix logarithm is a
global diffeomorphism from \(\SPD^{mn}\) to \(\mathbb S^{mn}\), and
\(\mathcal L_{m,n}\) is a linear subspace. Hence \(\Kron_{m,n}\) is a smooth
embedded submanifold. This formulation also removes the factor-scale gauge:
different pairs \((X,Y)\) that differ by
\((X+\alpha I_m,Y-\alpha I_n)\) represent the same element of
\(\mathcal L_{m,n}\).

For \(G_i=B_i\otimes A_i\), the affine-invariant geodesic satisfies
\[
  G_0^{-1/2}G_1G_0^{-1/2}
  =
  (B_0^{-1/2}B_1B_0^{-1/2})
  \otimes
  (A_0^{-1/2}A_1A_0^{-1/2}).
\]
Since \(\exp(t\log(P\otimes Q))=P^t\otimes Q^t\) for \(P,Q\succ0\), the
ambient geodesic between \(G_0\) and \(G_1\) remains in \(\Kron_{m,n}\). Hence
\(\Kron_{m,n}\) is totally geodesic.

It is also closed. If \(B_k\otimes A_k\to G\succ0\), fix the gauge
\(\det A_k=1\). Uniform lower and upper eigenvalue bounds on
\(B_k\otimes A_k\) bound all eigenvalue ratios of \(A_k\); the determinant
gauge then bounds the eigenvalues of \(A_k\) above and below, and the product
eigenvalue bounds do the same for \(B_k\). Passing to a subsequence gives
\(A_k\to A\succ0\) and \(B_k\to B\succ0\), hence \(G=B\otimes A\).

The SPD cone with \(d_{\AI}\) is a Hadamard manifold. By
\cref{lem:hadamard-projection-facts}, projection onto a closed geodesically
convex subset is unique, giving \(G_\star\).
For \(G=B\otimes A\), tangent vectors in whitened coordinates are exactly
\[
  Y\otimes I_m+I_n\otimes X\in\mathcal L_{m,n}.
\]
Indeed, every such \(Z\in\mathcal L_{m,n}\) generates the curve
\(G^{1/2}\exp(tZ)G^{1/2}\in\Kron_{m,n}\), while differentiating any smooth
factor curve \(B_t\otimes A_t\) and whitening by \(G^{-1/2}\) gives an element
of this same subspace. Thus the normal space is the Frobenius orthogonal
complement of \(\mathcal L_{m,n}\) in whitened coordinates.
At the projection point, the logarithmic residual
\[
  R_\star=\log(G_\star^{-1/2}MG_\star^{-1/2})
\]
is orthogonal to every tangent direction. Orthogonality to
\(I_n\otimes X\) and \(Y\otimes I_m\) is exactly
\[
  \operatorname{Tr}_B(R_\star)=0,\qquad
  \operatorname{Tr}_A(R_\star)=0.
\]
Conversely, these equations give the first-order projection condition on a
closed geodesically convex set in a Hadamard manifold, hence the unique
projection. The distance formula is the definition of \(d_{\AI}\).
\end{proof}

\begin{proof}[Proof of \cref{cor:kron-residual-certificate}]
The equivalence \(V(G)=0\Longleftrightarrow G=P_{\Kron}(M)\) is the normal
equation in \cref{thm:kron-ai-projection}. The quantitative bounds use
\cref{lem:hadamard-projection-facts}: the objective
\[
  f(G)=\frac12d_{\AI}(G,M)^2
\]
is \(1\)-strongly geodesically convex on the totally geodesic Hadamard
submanifold \(\Kron_{m,n}\). Let \(G_\star=P_{\Kron}(M)\),
\(v=\operatorname{Log}_G(G_\star)\), and
\(d=\norm{v}=d_{\AI}(G,G_\star)\). Strong convexity along the geodesic from
\(G\) to \(G_\star\) gives
\[
  f(G)-f(G_\star)\le \fro{V(G)}d-\frac12d^2.
\]
Strong convexity at the minimizer gives
\[
  f(G)-f(G_\star)\ge\frac12d^2.
\]
Combining the inequalities yields \(d\le\fro{V(G)}\), and maximizing
\(\fro{V(G)}d-d^2/2\) over \(0\le d\le\fro{V(G)}\) gives the objective-gap
bound.
\end{proof}

\begin{proof}[Proof of \cref{cor:kron-armijo-solver}]
If \(G_r=B_r\otimes A_r\), then \(V_r\in\mathcal L_{m,n}\) has the form
\[
  V_r=Y_r\otimes I_m+I_n\otimes X_r.
\]
The two summands commute, so \(\exp(\eta V_r)=\exp(\eta Y_r)\otimes
\exp(\eta X_r)\), and the trial point remains in \(\Kron_{m,n}\).

Let \(G_\star=P_{\Kron}(M)\). The restricted negative gradient of
\[
  f(G)=\frac12d_{\AI}(G,M)^2
\]
is represented in whitened coordinates by
\(V(G)=\Pi_{\mathcal L}R_G(M)\). The initial sublevel set
\(\mathcal Q_0=\{G\in\Kron_{m,n}:f(G)\le f(G_0)\}\) is closed and bounded in
the complete finite-dimensional Hadamard submanifold \(\Kron_{m,n}\), hence
compact by Hopf--Rinow. Since the SPD cone has no cut locus and the
squared-distance objective is smooth, the restricted gradient is Lipschitz on
a compact geodesic neighborhood of \(\mathcal Q_0\); let \(L_0\) be such a
constant. By \cref{lem:hadamard-projection-facts}, Armijo backtracking
terminates and the accepted step sizes have a positive lower bound
\(\eta_{\min}>0\) depending only on
\(\mathcal Q_0,L_0,\bar\eta,\gamma\), and \(c\)
\cite{absil2008optimization,boumal2023introduction}.

Armijo decrease gives
\[
  f(G_{r+1})\le f(G_r)-c\eta_r\fro{V_r}^2.
\]
Thus \(f(G_r)\) decreases and \(\sum_r\eta_r\fro{V_r}^2<\infty\). Since
\(\eta_r\ge\eta_{\min}\), we have \(\fro{V_r}\to0\). Any cluster point
\(\bar G\) in the compact sublevel set satisfies \(V(\bar G)=0\), hence
\(\bar G=G_\star\) by \cref{cor:kron-residual-certificate}. The cluster point is
unique, so the full sequence converges to \(G_\star\).

For the rate, set
\[
  \underline\eta=\min\{\eta_{\min},(2c)^{-1}\}.
\]
\Cref{cor:kron-residual-certificate} gives
\[
  \fro{V_r}^2\ge2\bigl[f(G_r)-f(G_\star)\bigr].
\]
Combining this with Armijo decrease and \(\eta_r\ge\underline\eta\) yields
\[
  f(G_{r+1})-f(G_\star)
  \le
  (1-2c\underline\eta)\bigl[f(G_r)-f(G_\star)\bigr],
\]
and iteration proves the displayed bound.
\end{proof}

\begin{proof}[Proof of \cref{thm:kron-k-target}]
For the self-conditioned metric target, the condition-number set \(\Ck\) is
closed and geodesically convex because
the weighted matrix geometric mean satisfies
\[
  \lambda_{\max}(S_0\#_tS_1)
  \le
  \lambda_{\max}(S_0)^{1-t}\lambda_{\max}(S_1)^t
\]
and the analogous lower bound for \(\lambda_{\min}\). Together with
\cref{thm:kron-ai-projection}, this makes
\(\Kron_{m,n,K}=\Kron_{m,n}\cap\Ck\) closed and geodesically convex.

The two displayed distance bounds are the Hadamard Pythagorean inequality for
projection onto \(\Kron_{m,n}\), followed by the triangle inequality through
\(G_\star=P_{\Kron}(M)\).

It remains to prove the QP formula. For \(B\otimes A\),
\[
  \kappa(B\otimes A)=\kappa(B)\kappa(A),
\]
so the target condition is
\[
  (x_m-x_1)+(y_n-y_1)\le\log K
\]
for sorted log-eigenvalues \(x_i\) of \(A\) and \(y_j\) of \(B\). By
\cref{prop:kron-product-distance}, the squared distance between Kronecker products
is
\[
  n\,d_{\AI}(A_0,A)^2+m\,d_{\AI}(B_0,B)^2
  +2(\log\det A-\log\det A_0)(\log\det B-\log\det B_0).
\]
The eigenvalue lower bound for the affine-invariant metric and
Hoffman--Wielandt give
\[
  d_{\AI}(A_0,A)^2\ge\sum_i(x_i-a_i)^2,\qquad
  d_{\AI}(B_0,B)^2\ge\sum_j(y_j-b_j)^2.
\]
The determinant terms are the corresponding sums of log-eigenvalue
differences. Expanding gives the lower bound
\[
  \sum_{i,j}(x_i+y_j-a_i-b_j)^2.
\]
Equality is attained by choosing \(A\) and \(B\) with the same eigenvectors as
\(A_0\) and \(B_0\), proving the convex QP.
\end{proof}

\begin{proof}[Proof of \cref{thm:kron-relative-reachability}]
For any \(G\succ0\),
\[
  \kappa(G^{-1}H)
  =
  \kappa(H^{-1/2}GH^{-1/2}),
\]
because \(G^{-1}H\) is similar to
\(H^{1/2}G^{-1}H^{1/2}=(H^{-1/2}GH^{-1/2})^{-1}\), and a positive-definite
matrix and its inverse have the same spectral condition number. Thus
\(G\in\mathcal R_{K,\Kron}(H)\) if and only if
\(H^{-1/2}GH^{-1/2}\in\mathcal C_{K,\Kron}(H)\), proving the equivalence of
the first two items.

For fixed \(G\succ0\), the condition \(\kappa(G^{-1}H)\le K\) is equivalent to
the existence of \(\lambda>0\) such that
\[
  \lambda G\preceq H\preceq K\lambda G.
\]
Indeed, take
\[
  \lambda=\lambda_{\min}(G^{-1/2}HG^{-1/2})
\]
for the forward implication, and use generalized Rayleigh quotients for the
reverse implication. Setting \(G=B\otimes A\) gives the Kronecker Loewner
sandwich. The scalar \(\lambda\) can be absorbed into \(A\) or \(B\), giving
the scale-free form.

It remains to identify the distance. The target can be written as
\[
  \mathcal R_{K,\Kron}(H)
  =
  \Kron_{m,n}\cap H^{1/2}\Ck H^{1/2}.
\]
The set \(\Ck\) is closed and geodesically convex, and congruence by \(H^{1/2}\)
is an affine-invariant isometry, so \(H^{1/2}\Ck H^{1/2}\) is also closed and
geodesically convex. Intersecting with the closed totally geodesic submanifold
\(\Kron_{m,n}\) gives a closed geodesically convex subset of the Kronecker
manifold. The Kronecker manifold is a complete Hadamard submanifold under the
induced affine-invariant metric, so projection onto this nonempty closed
geodesically convex subset exists and is unique.

Finally, congruence by \(H^{-1/2}\) maps paths in
\(\Kron_{m,n}\) isometrically to paths in \(\mathcal S_{\Kron}(H)\). Therefore
the intrinsic distance from \(S_0\) to
\(\mathcal C_{K,\Kron}(H)\) equals the affine-invariant distance from \(G_0\)
to \(\mathcal R_{K,\Kron}(H)\). By \cref{prop:submanifold-distance}, this is
exactly \(D_{K,\Kron}(S_0;H)\).
\end{proof}

\begin{proof}[Proof of \cref{cor:kron-kstar}]
The objective \(\kappa(G^{-1}H)\) is invariant under positive rescaling of
\(G\). Hence a minimizing sequence in \(\Kron_{m,n}\) can be rescaled to satisfy
\(\det G=\det H\). Let
\[
  S=H^{-1/2}GH^{-1/2}.
\]
On this determinant gauge, \(\det S=1\). Since the sequence has bounded
condition number, the eigenvalues of \(S\) are uniformly bounded above and
below. Thus the corresponding sequence of \(S\)'s has a convergent subsequence,
and so does the sequence \(G=H^{1/2}SH^{1/2}\). The family \(\Kron_{m,n}\) is
closed by \cref{thm:kron-ai-projection}, so the limit remains Kronecker and
attains the minimum.

The equivalence
\[
  \mathcal R_{K,\Kron}(H)\ne\emptyset
  \quad\Longleftrightarrow\quad
  K\ge K_{\Kron}^\star(H)
\]
is then immediate from the definition of \(K_{\Kron}^\star(H)\).
\end{proof}

\begin{proof}[Proof of \cref{thm:kron-nearest-versus-best}]
For
\[
  \phi_{\mathrm{osc}}(x)=\max_i x_i-\min_i x_i,
\]
the spectral subdifferential at \(\log S\) is
\[
  \left\{Q_+-Q_-:
  Q_\pm\succeq0,\ \tr Q_\pm=1,\
  \operatorname{range}(Q_+)\subseteq E_+,\
  \operatorname{range}(Q_-)\subseteq E_-\right\}.
\]
By \cref{thm:normal-response-law}, the whitened subgradient for the metric
variable has the opposite sign, \(Q_--Q_+\), and global Kronecker optimality
is equivalent to one such subgradient lying in the normal space of the
Kronecker manifold. The whitened Kronecker tangent space is
\[
  \mathcal L_{m,n}
  =\{Y\otimes I_m+I_n\otimes X:X\in\mathbb S^m,\ Y\in\mathbb S^n\}.
\]
Frobenius orthogonality to \(I_n\otimes X\) for every \(X\) is equivalent to
\(\operatorname{Tr}_B(Q_--Q_+)=0\), and orthogonality to
\(Y\otimes I_m\) for every \(Y\) is equivalent to
\(\operatorname{Tr}_A(Q_--Q_+)=0\). This proves the necessary-and-sufficient
condition. When the extreme eigenvalues are simple, the only density matrices
on the two one-dimensional eigenspaces are \(u_+u_+^\top\) and
\(u_-u_-^\top\), giving the marginal formulation.

Suppose next that \(S\) has exactly two eigenvalues. Write
\[
  \log S=a\Pi_++b\Pi_-,
  \qquad r_\pm=\rank\Pi_\pm.
\]
The nearest-point normal equation gives \(\log S\in\widehat N_P\Kron\).
Because the Kronecker family is scale closed, the whitened scale direction
\(I\) is tangent, so \(\tr\log S=r_+a+r_-b=0\). Since \(a>b\), there is a
constant \(c>0\) such that
\[
  \log S
  =c\left(\frac{\Pi_+}{r_+}-\frac{\Pi_-}{r_-}\right).
\]
Taking \(Q_+=\Pi_+/r_+\) and \(Q_-=\Pi_-/r_-\) makes
\(Q_--Q_+=-c^{-1}\log S\) normal. The first part then proves global
condition optimality.

For the strict counterexample, \(\tr A=\tr B=0\), and hence
\[
  \operatorname{Tr}_B(\log H)
  =\operatorname{Tr}_B(B\otimes A)=0,
  \qquad
  \operatorname{Tr}_A(\log H)=0.
\]
The normal equations in \cref{thm:kron-ai-projection} therefore show that
\(I_6\) is the unique nearest Kronecker point. The relative log-eigenvalues at
the identity are the products of the diagonal entries of \(B\) and \(A\), so
their maximum and minimum are \(2\) and \(-2\), giving
\(\log\kappa(H)=4\).

The proposed metric commutes with \(H\), and
\[
  \log(G^{-1}H)
  =(B-\tfrac14I_3)\otimes A.
\]
The diagonal entries of \(B-\tfrac14I_3\) are
\(7/4,-3/4,-7/4\). Multiplication by the two diagonal entries \(\pm1\) of
\(A\) gives extreme relative log-eigenvalues \(7/4\) and \(-7/4\). Their
oscillation is \(7/2<4\), proving strict noncoincidence.
\end{proof}

\begin{proof}[Proof of \cref{cor:kron-two-by-two-completeness}]
Let \(P=P_{\Kron}(H)\), set \(S=P^{-1/2}HP^{-1/2}\), and write
\(L=\log S\).  The projection normal equations give
\[
  \operatorname{Tr}_A(L)=\operatorname{Tr}_B(L)=0.
\]
Introduce the real matrices
\[
  X=\begin{pmatrix}0&1\\1&0\end{pmatrix},
  \qquad
  Z=\begin{pmatrix}1&0\\0&-1\end{pmatrix},
  \qquad
  J=\begin{pmatrix}0&-1\\1&0\end{pmatrix}.
\]
The symmetric matrices \(I,X,Z\) and the skew-symmetric matrix \(J\) give an
orthogonal tensor basis for real symmetric \(4\times4\) matrices: the symmetric
tensors are generated by the nine products of \(I,X,Z\), together with
\(J\otimes J\).  The two zero-partial-trace conditions remove every term
containing an identity factor.  Hence
\[
  L
  =
  \sum_{r,s\in\{X,Z\}}C_{rs}\,r\otimes s+cJ\otimes J
\]
for a real \(2\times2\) coefficient matrix \(C\) and a scalar \(c\).

Conjugation by \(O\in O(2)\) acts on the span of \(X,Z\) through an
orthogonal \(2\times2\) transformation, and every such transformation is
realized by a suitable \(O\): planar rotations act by twice their angle on
\(\operatorname{span}\{X,Z\}\), and a reflection supplies the other connected
component of \(O(2)\).  Applying the singular value decomposition of
\(C\), choose local orthogonal matrices \(O_A,O_B\) so that conjugation by
\(O_B\otimes O_A\) transforms \(L\) into
\[
  L'=aX\otimes X+bZ\otimes Z+c'J\otimes J.
\]
The three displayed tensor products commute.  Their common orthonormal
eigenbasis is the real Bell basis
\[
\begin{aligned}
  \psi_1&=(e_1\otimes e_1+e_2\otimes e_2)/\sqrt2,&
  \psi_2&=(e_1\otimes e_1-e_2\otimes e_2)/\sqrt2,\\
  \psi_3&=(e_1\otimes e_2+e_2\otimes e_1)/\sqrt2,&
  \psi_4&=(e_1\otimes e_2-e_2\otimes e_1)/\sqrt2.
\end{aligned}
\]
Every Bell projector has both marginal density matrices equal to \(I_2/2\).
If \(\Pi_+\) and \(\Pi_-\) are the extreme spectral projectors of \(L'\),
then
\[
  Q_+=\frac{\Pi_+}{\rank\Pi_+},
  \qquad
  Q_-=\frac{\Pi_-}{\rank\Pi_-}
\]
therefore have identical marginals on both factors.  Local orthogonal
conjugation preserves this equality, so the same is true for the extreme
eigenspaces of \(L\).  \Cref{thm:kron-nearest-versus-best} proves that \(P\)
is condition-optimal.

If one factor has dimension one, the Kronecker family is the full SPD cone in
the other factor and separation is impossible.  The only remaining nontrivial
factor pair below ambient dimension six is \(2\times2\), just handled.  The
existing \(2\times3\) example attains dimension six, proving minimality.
\end{proof}

\begin{proof}[Proof of \cref{cor:kron-marginal-mismatch}]
The whitened metric-variable subdifferential of
\(G\mapsto\log\kappa(G^{-1}H)\) at \(P\) is
\[
  \mathcal W(P)
  =
  \{Q_--Q_+:Q_\pm\in\mathcal D_\pm\}.
\]
Restricting the objective to the Kronecker manifold projects this set onto the
whitened tangent space \(\mathcal L_{m,n}\), giving exactly
\(\mathcal G(P)=\Pi_{\mathcal L}\mathcal W(P)\).  The sets
\(\mathcal D_\pm\) are compact and convex, so \(\mathcal G(P)\) is compact
and convex.  Strict convexity of the squared Frobenius norm gives its unique
minimum-norm element \(g_\star\).

By the constrained Fermat rule, \(P\) is condition-optimal if and only if
\(0\in\mathcal G(P)\), equivalently \(\mu_{\mathrm{marg}}(P)=0\).  Suppose
now that \(g_\star\ne0\).  The variational characterization of the Euclidean
projection of zero onto \(\mathcal G(P)\) gives
\[
  \ip{g-g_\star}{g_\star}\ge0
  \qquad\text{for every }g\in\mathcal G(P).
\]
Hence
\[
  \min_{g\in\mathcal G(P)}\ip{g}{g_\star}
  =\fro{g_\star}^2.
\]
The one-sided directional derivative of a convex function is the support
function of its subdifferential.  Along the whitened tangent direction
\(-g_\star\), it is therefore
\[
  \max_{g\in\mathcal G(P)}\ip{g}{-g_\star}
  =-\fro{g_\star}^2.
\]
Because \(g_\star\in\mathcal L_{m,n}\), the corresponding AIRM geodesic
remains in the Kronecker manifold, proving the descent identity.

For simple extremes, both \(\mathcal D_\pm\) are singletons, which gives the
displayed formula.  With multiplicity, minimizing the norm of a linear image
of \((Q_+,Q_-)\) subject to positive-semidefinite, trace, and support
constraints is a finite-dimensional convex conic problem.
\end{proof}

\begin{proof}[Proof of \cref{thm:kron-residual-threshold}]
Let
\[
  \delta_\star=d_{\AI}(H,\Kron_{m,n})=d_{\AI}(H,P).
\]
Since \(G\in\Kron_{m,n}\), \(\delta_\star\le\rho\). By
\cref{cor:kron-residual-certificate},
\[
  0
  \le
  \frac12\rho^2-\frac12\delta_\star^2
  \le
  \frac12\eta^2,
\]
and hence \(\delta_\star\ge\sqrt{\max\{\rho^2-\eta^2,0\}}=\delta_-\). This
proves the distance bracket.

The family \(\Kron_{m,n}\) is closed under positive rescaling, so
\cref{thm:scale-closed-threshold} gives
\[
  K_{\Kron}^\star(H)
  \ge
  \exp(\alpha_{mn}\delta_\star)
  \ge
  \exp(\alpha_{mn}\delta_-).
\]
The upper bound \(K_{\Kron}^\star(H)\le\kappa_G\) holds because \(G\) is an
admissible Kronecker metric. If \(\kappa_G\le K\), then
\cref{prop:kron-relative-candidate} applied with \(G_c=G\) gives the displayed
finite \(D_{K,\Kron}\) upper bound.

It remains to prove the exact-projection condition bracket. Again by
\cref{cor:kron-residual-certificate},
\[
  d_{\AI}(G,P)\le\eta.
\]
Therefore every log-eigenvalue of \(G^{-1/2}PG^{-1/2}\) has absolute value at
most \(\eta\), and
\[
  e^{-\eta}G\preceq P\preceq e^\eta G.
\]
For any nonzero \(x\),
\[
  e^{-\eta}\frac{x^\top Hx}{x^\top Gx}
  \le
  \frac{x^\top Hx}{x^\top Px}
  \le
  e^\eta\frac{x^\top Hx}{x^\top Gx}.
\]
Taking suprema and infima over generalized Rayleigh quotients gives
\[
  \lambda_{\max}(P^{-1}H)\le e^\eta\lambda_{\max}(G^{-1}H),\qquad
  \lambda_{\min}(P^{-1}H)\ge e^{-\eta}\lambda_{\min}(G^{-1}H),
\]
so \(\kappa_P\le e^{2\eta}\kappa_G\). Interchanging \(G\) and \(P\) gives
\(\kappa_G\le e^{2\eta}\kappa_P\), equivalently
\(\kappa_P\ge e^{-2\eta}\kappa_G\).

Since \(P\) is an admissible Kronecker metric,
\(K_{\Kron}^\star(H)\le\kappa_P\).  The scale-closed lower bound and the
distance bracket give
\[
  \log K_{\Kron}^\star(H)
  \ge\alpha_{mn}\delta_\star
  \ge\alpha_{mn}\delta_-.
\]
Combining this with \(\log\kappa_P\le\log\kappa_G+2\eta\) proves the stated
projection-suboptimality chain.  The projection feasibility and
projection-infeasibility statements follow from the two-sided condition
bracket. The global impossibility certificate is the contrapositive of the
threshold lower bound.
\end{proof}

\begin{proof}[Proof of \cref{prop:kron-fixed-basis-relative}]
For
\[
  G=(V\diag(e^{b_j})V^\top)\otimes(U\diag(e^{a_i})U^\top),
\]
the matrices \(H\) and \(G\) commute in the basis \(V\otimes U\), and the
eigenvalues of \(G^{-1}H\) are
\[
  \exp(\ell_{ij}-a_i-b_j).
\]
Therefore
\[
  \log\kappa(G^{-1}H)
  =
  \max_{i,j}(\ell_{ij}-a_i-b_j)
  -
  \min_{i,j}(\ell_{ij}-a_i-b_j).
\]
The condition \(\kappa(G^{-1}H)\le K\) is thus equivalent to the existence of
a scalar \(c\) such that all residual log-eigenvalues lie in
\([c,c+\log K]\), which is exactly the displayed linear feasibility problem.
Minimizing the residual width over \(a,b\) gives the fixed-basis threshold
formula. Since \(\Kron_{U,V}\subset\Kron_{m,n}\), feasibility in the subfamily
is a full-family primal certificate; subfamily infeasibility gives no
full-family obstruction.
\end{proof}

\begin{proof}[Proof of \cref{thm:kron-fixed-basis-dual}]
For any \(Z\in\R^{m\times n}\),
\[
  \max_{ij}Z_{ij}-\min_{ij}Z_{ij}
  =
  \max_{p,q\in\Delta_{mn}}\ip{p-q}{Z},
\]
where \(\Delta_{mn}\) is the probability simplex over entries. The set
\(\{p-q:p,q\in\Delta_{mn}\}\) is exactly
\[
  \mathcal C=\{W:\sum_{ij}W_{ij}=0,\ \sum_{ij}|W_{ij}|\le2\}.
\]
Let
\[
  \mathcal A=\{A\in\R^{m\times n}:A_{ij}=a_i+b_j\}.
\]
Then
\[
  \tau^\star_{U,V}(H)=\min_{A\in\mathcal A}\max_{W\in\mathcal C}
  \ip{W}{L-A}.
\]
This is a finite-dimensional linear program. Its dual is
\[
  \max_{W\in\mathcal C\cap\mathcal A^\perp}\ip{W}{L},
\]
with no duality gap. The orthogonality condition is computed from
\[
  \ip{W}{A}
  =
  \sum_i a_i\sum_j W_{ij}
  +
  \sum_j b_j\sum_i W_{ij}.
\]
Thus \(W\in\mathcal A^\perp\) exactly when every row sum and every column sum
vanishes. These constraints imply \(\sum_{ij}W_{ij}=0\), so the dual feasible
set is the one displayed in the theorem.

If a feasible \(W\) has \(\ip{W}{L}>\log K\), then
\(\tau^\star_{U,V}(H)>\log K\), so the fixed-basis log-width target is
infeasible by \cref{prop:kron-fixed-basis-relative}. For any nonzero
zero-total \(W\), the identity
\(\sum W^+=\sum W^-=\frac12\norm{W}_1\) gives the normalized witness form when
\(\norm{W}_1=2\).
\end{proof}

\begin{proof}[Proof of \cref{thm:kron-certify-soundness}]
\Cref{prop:kron-fixed-basis-relative} proves soundness of the fixed-basis
feasible status.  \Cref{thm:kron-fixed-basis-dual} proves the obstruction
status.  The four full noncommuting statuses follow from
\cref{thm:kron-residual-threshold}. If none of those conditions holds, the
procedure records only \(\mathrm{INCONCLUSIVE}\), which makes no claim.
\end{proof}

\begin{proof}[Proof of \cref{cor:kron-certify-interval}]
The enclosure assumptions imply
\[
  \delta_-
  =
  \sqrt{\max\{\rho^2-\eta^2,0\}}
  \ge
  \sqrt{\max\{\underline\rho^2-\overline\eta^2,0\}}
  =\underline\delta.
\]
Therefore
\(K_{\Kron}^\star(H)\ge\exp(\alpha_{mn}\underline\delta)\), proving the
first implication.  The second follows from
\(\kappa_G\le\overline\kappa\).  For the exact projection, the bracket in
\cref{thm:kron-residual-threshold} and the enclosures give
\[
  e^{-2\overline\eta}\underline\kappa
  \le
  e^{-2\eta}\kappa_G
  \le
  \kappa_P
  \le
  e^{2\eta}\kappa_G
  \le
  e^{2\overline\eta}\overline\kappa.
\]
The remaining two implications follow immediately.  Returning
\(\mathrm{INCONCLUSIVE}\) when no strict certified comparison holds makes no
claim and is therefore sound.
\end{proof}

\begin{proof}[Proof of \cref{prop:kron-relative-candidate}]
The matrix
\[
  S_c^{-1}=H^{1/2}G_c^{-1}H^{1/2}
\]
has the same eigenvalues as \(G_c^{-1}H\), so
\(\kappa(S_c)=\kappa(G_c^{-1}H)\). Hence the displayed condition is exactly
membership of \(S_c\) in \(\Ck\), and \(S_c\in\mathcal C_{K,\Kron}(H)\).

By \cref{thm:kron-ai-projection}, \(\Kron_{m,n}\) is totally geodesic in the
ambient affine-invariant SPD cone. Therefore the affine-invariant geodesic
from \(G_0\) to \(G_c\) remains in \(\Kron_{m,n}\) and has length
\(d_{\AI}(G_0,G_c)\). Congruence by \(H^{-1/2}\) is an isometry for the
affine-invariant metric, so the relative path
\[
  S_t=H^{-1/2}G_tH^{-1/2}
\]
is an admissible path in \(\mathcal S_{\Kron}(H)\) from \(S_0\) to the target
point \(S_c\) with the same length. Taking the infimum over all admissible
paths gives
\[
  D_{K,\Kron}(S_0;H)\le d_{\AI}(G_0,G_c).
\]
The closed-form expression for the right-hand side is
\cref{prop:kron-product-distance}. The last statement is only the logical
one-way nature of a sufficient certificate.
\end{proof}

\begin{proof}[Proof of \cref{prop:kron-kkt}]
The objective gradient at \(Q\), in whitened coordinates and restricted to
\(\mathcal L_{m,n}\), is \(-\Pi_{\mathcal L}R_Q(M)\). For the path
\[
  Q(t)=Q^{1/2}\exp(tZ)Q^{1/2},
\]
the directional derivative of \(\log\lambda_{\max}\) is
\[
  \max_{\substack{u\in E_{\max}\\ \norm{u}=1}}u^\top Zu,
\]
and the directional derivative of \(\log\lambda_{\min}\) is
\[
  \min_{\substack{v\in E_{\min}\\ \norm{v}=1}}v^\top Zv.
\]
Thus the directional derivative of \(\omega=\log\lambda_{\max}-\log\lambda_{\min}\)
is the support function of the stated subdifferential. Since \(K>1\) has the
strict feasible point \(I_{mn}\), the convex KKT condition on the tangent
Hadamard submanifold gives
\[
  0\in-\Pi_{\mathcal L}R_\star+\mu\Pi_{\mathcal L}\partial\omega(Q_\star),
  \qquad \mu\ge0,
\]
with complementarity. This is the displayed equation; inactive constraints
have \(\mu=0\).
\end{proof}

\begin{proof}[Proof of \cref{prop:kfac-certificates}]
Projection onto the closed geodesically convex family \(\Kron_{m,n}\) satisfies
the Hadamard Pythagorean inequality. Applying it to
\[
  H_t,\qquad P_t=P_{\Kron}(H_t),\qquad G_t\in\Kron_{m,n}
\]
gives \(E_t^2\ge M_t^2+A_t^2\). Applying
\cref{thm:kron-k-target} with \(M=H_t\) and \(G_\star=P_t\) gives the two-sided
self-conditioned \(K\)-target bound. The computability statements are exactly
\cref{thm:kron-ai-projection,thm:kron-k-target,prop:kron-product-distance}.
\end{proof}

\begin{proof}[Proof of \cref{prop:proxy-inflation}]
From
\[
  (1-\varepsilon)H\preceq \widehat H\preceq (1+\varepsilon)H
\]
we obtain
\[
  \frac{1}{1+\varepsilon}
  \frac{v^\top\widehat H v}{v^\top Gv}
  \le
  \frac{v^\top H v}{v^\top Gv}
  \le
  \frac{1}{1-\varepsilon}
  \frac{v^\top\widehat H v}{v^\top Gv}.
\]
Taking minima and maxima over nonzero \(v\) gives the bound on generalized
condition numbers.
\end{proof}

\begin{proof}[Proof of \cref{prop:stoch-proxy-upper}]
On the stated Loewner event, \cref{prop:proxy-inflation} gives
\[
  \kappa(G^{-1}H)
  \le
  \frac{1+\varepsilon}{1-\varepsilon}
  \kappa(G^{-1}\widehat H)
  \le
  \frac{1+\varepsilon}{1-\varepsilon}\widehat K
  =K.
\]
The event has probability at least \(1-\delta\), which proves the claim.
\end{proof}

\section{Extensions}
\label{app:extensions}

\subsection{Low-rank spectral monotonicity}

Let
\[
  \mathcal Y_{K,r}(y)
  =
  \{z\in\Yk:\exists c\in\R,\ |\{i:z_i\ne y_i+c\}|\le r\}.
\]
Then
\[
  D_{K,r}^{\mathrm{spec}}(y)
  =
  \dist_{\ell^2}(y,\mathcal Y_{K,r}(y)).
\]
\begin{proof}[Proof of \cref{prop:lowrank-monotonicity}]
If \(r_1\le r_2\), then
\[
  \mathcal Y_{K,r_1}(y)\subseteq \mathcal Y_{K,r_2}(y)\subseteq \Yk,
\]
and therefore
\[
  D_{K,r_1}^{\mathrm{spec}}(y)
  \ge
  D_{K,r_2}^{\mathrm{spec}}(y)
  \ge
  D_K(y).
\]
When \(r\ge d\), the low-rank restriction disappears and equality with
\(D_K(y)\) holds.
\end{proof}

\end{document}